\newtheorem{theorem}{Theorem}
\newtheorem{corollary}{Corollary}
\newtheorem{lemma}{Lemma}
\newtheorem{observation}{Observation}
\newtheorem{definition}{Definition}
\newtheorem{remark}{Remark}
\newtheorem{example}{Example}
\newtheorem{assumptionu}{Assumption}
\newtheorem{assumptionw}{Assumption}
\newcommand{\E}{\mathbb{E}}
\newcommand{\bbO}{\mathbb{O}}
\newcommand{\bbR}{\mathbb{R}}
\newcommand{\bbW}{\mathbb{W}}
\newcommand{\bM}{M}
\newcommand{\dbar}{\bar{d}}
\newcommand{\dcirc}{\mathring{d}}
\newcommand{\calC}{\mathcal{C}}
\newcommand{\calL}{\mathcal{L}}
\newcommand{\calN}{\mathcal{N}}
\newcommand{\calS}{\mathcal{S}}
\newcommand{\calT}{\mathcal{T}}
\newcommand{\calX}{\mathcal{X}}
\newcommand{\bMtilde}{\tilde{\bM}}
\newcommand{\Ptilde}{\tilde{P}}
\newcommand{\Utilde}{\tilde{U}}
\newcommand{\Ytilde}{\tilde{Y}}
\newcommand{\nutilde}{\tilde{\nu}}
\newcommand{\ahat}{\hat{a}}
\newcommand{\bhat}{\hat{b}}
\newcommand{\Ahat}{\hat{A}}
\newcommand{\Fhat}{\hat{F}}
\newcommand{\Mhat}{\hat{M}}
\newcommand{\Phat}{\hat{P}}
\newcommand{\Shat}{\hat{S}}
\newcommand{\Uhat}{\hat{U}}
\newcommand{\Xhat}{\hat{X}}
\newcommand{\rhohat}{\hat{\rho}}
\newcommand{\Astar}{A^*}
\newcommand{\UA}{\Uhat}
\newcommand{\UP}{U}
\newcommand{\SA}{\Shat}
\newcommand{\SP}{S}
\newcommand{\ivec}{c}
\newcommand{\tr}{\operatorname{tr}}
\newcommand{\supp}{\operatorname{supp}}
\newcommand{\diag}{\operatorname{diag}}
\newcommand{\iid}{\stackrel{\text{i.i.d.}}{\sim}}
\newcommand{\tti}{2,\infty}
\newcommand{\inlaw}{\xrightarrow{\calL}}
\newcommand{\inprob}{\xrightarrow{P}}
\newcommand{\indic}{\mathbb{I}}
\newcommand{\Bern}{\operatorname{Bern}}
\newcommand{\Var}{\operatorname{Var}}
\newcommand{\opBeta}{\operatorname{Beta}}
\newcommand{\Multinomial}{\operatorname{Multinomial}}
\newcommand{\RDPG}{\operatorname{RDPG}}
\newcommand{\ASE}{\operatorname{ASE}}
\newcommand{\GM}{\operatorname{GM}}
\newcommand{\UBF}{V}
\newcommand{\UBFhat}{\hat{V}}
\newcommand{\dGM}{d_{\GM}}
\newcommand{\WGM}[1]{W_{#1}}
\begin{document}

\title{Bootstrapping Networks with Latent Space Structure}
\author[1]{Keith D. Levin}
\author[2]{Elizaveta Levina}
\date{}

\affil[1]{\small Department of Statistics, University of Wisconsin-Madison, USA.}
\affil[2]{\small Department of Statistics, University of Michigan, USA.}

\maketitle

\begin{abstract}
A core problem in statistical network analysis is to develop network analogues of classical techniques.  The problem of bootstrapping network data stands out as especially challenging, since typically one observes only a single network, rather than a sample. Here we propose two methods for obtaining bootstrap samples for networks drawn from latent space models. The first method generates bootstrap replicates of network statistics that can be represented as U-statistics in the latent positions, and avoids actually constructing new bootstrapped networks.  The second method generates bootstrap replicates of whole networks, and thus can be used for bootstrapping any network function.  Commonly studied network quantities that can be represented as U-statistics include many popular summaries, such as average degree and subgraph counts, but other equally popular summaries, such as the clustering coefficient, are not expressible as U-statistics and thus require the second bootstrap method.  Under the assumption of a random dot product graph, a type of latent space network model, we show consistency of the  proposed bootstrap methods. We give motivating examples throughout and demonstrate the effectiveness of our methods on synthetic data.  \end{abstract}

\section{Introduction} \label{sec:intro}
As network data become ever more common in the sciences, a need has emerged
for network analogues of classical statistical methods.
Such analogues have been developed for some tasks such as
network two-sample testing \citep{FosHof2015,TanAthSusLyzPri2017,TanAthSusLyzParPri2017},
changepoint detection \citep{WanYuRin2018}
and community estimation
\citep[i.e., the network analogue of clustering;][]{LyzSusTanAthPri2014,Abbe2018},
to name a few, but many other common tasks still have no network equivalents, or have not been rigorously studied.
One such task is the generation of bootstrap samples.
The bootstrap \citep{EfrTib1994}
allows one to make inferences about a population distribution by resampling
from an observed i.i.d.\ sample.
Unfortunately, the fact that we typically observe only one network has made developing network analogues difficult, though there are resampling methods for other dependent data such as time series  \citep[see, e.g.,][]{Lahiri2003},
and methods for the related task of cross-validation on networks have been developed \citep[see][and citations therein]{LiLevZhu2016}.

We propose two related bootstrap methods for networks, 
one for generating bootstrap samples of certain network statistics 
(e.g., subgraph counts, centrality measures, etc.),
and another for generating bootstrap replicates of whole networks.  Obviously the latter method can also be used to bootstrap network statistics, but it comes with a higher computational cost, and thus it is convenient to have the first method,  which applies to many commonly used network statistics.
The topic of bootstrapping network data is still new in the literature,
and we are aware of only a few papers on the topic \citep{BhaBic2015,GreSha2017,LunSar2019}, largely focused on the problem of bootstrapping subgraph counts of networks generated from graphons.
For a particular subgraph of interest on $p$ vertices, \cite{BhaBic2015} generate bootstrap replicates of its counts by sampling from the set of connected $p$-node subgraphs of the observed network on $n > p$ nodes, using the algorithm introduced in \cite{Wernicke2006}.
Following similar lines, \cite{GreSha2017} consider two related
methods for generating bootstrap samples of subgraph counts from
an observed network. The first relies upon resampling from what the
authors term the {\em empirical graphon}, which amounts to
resampling vertices with replacement from the observed network.
The second method relies upon resampling from a stochastic block model
fit to the observed network, since such a model can 
approximate any graphon, if the number of communities
is chosen sufficiently large.
\cite{ChaKolYao2018} considered the problem of estimating subgraph densities under a different setting, where there exists a true underlying network, but one observes a noisy version based on adding or removing edges independently.
The authors presented a bootstrap sampling scheme for constructing confidence intervals for the subgraph densities of the true underlying network.

Subgraph counts are no doubt important, but there are many other network quantities of interest, and little is known about bootstrapping anything other than subgraph counts.
Under network latent space models \citep{HofRafHan2002}, many such quantities of interest can be expressed as U-statistics of the latent positions.
We show (Theorem~\ref{thm:Uboot}) that under the fairly general latent space model known as the {\em random dot product graph} \citep{YouSch2007}, these U-statistics can be bootstrapped by first estimating the latent positions of the vertices and then bootstrapping a plug-in version of the quantity of interest using known techniques for bootstrapping U- and V-statistics \citep{ArcGin1992,HusJan1993,BosCha2018}.
Recent independent work by \cite{LunSar2019}, concurrent with this manuscript, presents a bootstrapping procedure based on sampling induced subgraphs of the observed network, broadly similar to the approach in \cite{BhaBic2015}.
They showed that their bootstrap is valid for any network statistic obeying a central limit theorem under appropriate scaling and subgraph sampling operations, under a condition that is nontrivial to verify in general; our results in this paper can be adapted to show that this condition is satisfied by the U-statistics we consider.
However, since the bootstrap introduced in \cite{LunSar2019} operates by sampling subgraphs of the observed network, it cannot take advantage of the computational speedups available under our U-statistic bootstrapping scheme, discussed in Section~\ref{sec:ustat}.

In another direction, there are settings in which we may wish to generate bootstrap samples of whole networks, either for use in downstream inference or to bootstrap other network statistics that do not admit a U-statistic representation.
That is, we seek to use a single observed network to generate bootstrap network samples that, ideally, have the same distribution as the observed network, under as general assumptions as possible.
We show in Theorem~\ref{thm:RDPGboot:wasser} that this is possible under the random dot product graph, which allows for arbitrary distributions of latent positions and includes many other commonly used models with independent edges as special cases, including the stochastic block model.
Previous work has considered generation of parametric bootstrap network samples from the stochastic block model \citep{BicChoChaZha2013,BicSar2015,Lei2016}.
\cite{ShaAst2017} proved that in latent space models, generating network samples based on maximum likelihood estimates of the latent positions yields consistent bootstrap samples, but obtaining these maximum likelihood estimates is, under most latent space models, computationally infeasible.
\cite{Lei2018graphs} studied a latent space model for exchangeable random graphs.
That paper does not consider bootstrapping, but it contains a result analogous to our Theorem~\ref{thm:RDPGboot:wasser} for a weaker notion of graph convergence.
To the best of our knowledge, the present paper is the first to propose a computationally feasible network bootstrap under general latent space models.   

\subsection{Latent Space Models and U-statistics}

As a simple example of how latent space models interact nicely with certain network statistics, consider the triangle density, defined as
\begin{equation*}
\Phat(K_3) = \binom{n}{3}^{-1} \sum_{1 \le i < j < k \le n}
		A_{ij} A_{jk} A_{ki},
\end{equation*}
where $A \in \{0,1\}^{n \times n}$ is a symmetric adjacency matrix.
Subgraph densities such as this play a role for node-exchangeable random graphs (i.e., those generated from graphons) that is analogous to the role of moments for Euclidean data \citep{BicCheLev2011,Lovasz2012,MauOlhPriWol2017}.
As a result, obtaining confidence intervals for expected subgraph densities such as $\E \Phat(K_3)$ based on a single observed graph is of import for many questions of statistical interest.

Under a latent space model, $A$ is generated by first drawing latent positions $X_1,X_2,\dots,X_n$ i.i.d.\ from some distribution $F$ on a set $\calX$ endowed with a symmetric link function $\kappa : \calX \times \calX \rightarrow [0,1]$.
Conditioned on the latent positions, the entries of $A$ are drawn independently, with $A_{ij} \sim \Bern( \kappa(X_i,X_j) )$.
The expectation of the triangle density conditional on the latent positions is
\begin{equation*}
\E[ \Phat(K_3) \mid X_1,\dots,X_n ] =
	\binom{n}{3}^{-1} \sum_{1 \le i < j < k \le n}
		\kappa(X_i,X_j) \kappa(X_j, X_k) \kappa(X_k, X_i),
\end{equation*}
which is a U-statistic with the kernel $h(x,y,z) = \kappa(x,y) \kappa(y,z) \kappa(z,x)$.
In Section~\ref{sec:ustat}, we show that a number of popular network quantities can be similarly written as U-statistics, including, notably, all subgraph densities.
For any such U-statistic with kernel $h$,  if we  had access to the latent positions, we could apply existing techniques for bootstrapping U-statistics, such as, for example, \cite{BicFre1981,ArcGin1992,HusJan1993,BosCha2018}.
This would permit us to calculate bootstrap confidence intervals for the network parameter $\theta = \E h(X_1,X_2,\dots,X_m)$ by drawing bootstrap replicates of
\begin{equation*}
U_n = U_n(h) = \binom{n}{m}^{-1} \sum_{1 \le i_1 < i_2 < \cdots < i_m \le n}
				h(X_{i_1},X_{i_2},\dots,X_{i_m}).
\end{equation*}
Of course, in practice we do not observe the latent positions and must instead estimate them from the observed adjacency matrix $A$.
Supposing for now that we had estimates $\Xhat_1,\Xhat_2,\dots,\Xhat_n$ based on $A$, a sensible approach would be to instead bootstrap the quantity
\begin{equation*}
\Uhat_n = \Uhat_n(h) =
		 \binom{n}{m}^{-1} \sum_{1 \le i_1 < i_2 < \cdots < i_m \le n}
                        h(\Xhat_{i_1},\Xhat_{i_2},\dots,\Xhat_{i_m}).
\end{equation*}
Under suitable smoothness conditions on $h$ (e.g., that it admits a Taylor
expansion on the support of $F$) and provided that the true latent positions can be sufficiently accurately estimated, we may reasonably expect $\Uhat_n$ to be a good approximation to $U_n$ and that, furthermore, bootstrap techniques applied to $\Uhat_n$ instead of $U_n$ will still produce an (approximately) equivalent bootstrap sampling distribution.

\subsection{Nonparametric network bootstrap samples}
\label{subsec:intro:netboot}

The classical bootstrap \citep{EfrTib1994} is based upon sampling with replacement from an observed sample $X_1,X_2,\dots,X_n$ as though it were the population itself.
Under suitable conditions, these bootstrap samples can be used to make inferences about the population distribution, but there is no straightforward analogue for network data, owing both to the fact that we typically observe only one network rather than an i.i.d.\ sample and to the fact that there is dependence among the edges.
Fortunately, the structure of latent space models provides a way forward.

The latent positions $X_1,X_2,\dots,X_n$ are drawn i.i.d.\ from the distribution $F$, and thus the bootstrap sample 
$X_1^*,X_2^*,\dots,X_n^*$ drawn i.i.d.\ from the empirical distribution $F_n = n^{-1} \sum_{i=1}^n \delta_{X_i}$
can be thought of as being approximately drawn from $F$ so long as the empirical distribution $F_n$ is a good approximation to $F$.
Thus, once again, if we had access to the latent positions, it would be natural to generate a bootstrap replica $\Astar$ of the adjacency matrix by drawing from $F_n$ as though it were the latent position distribution.
That is, conditional on the sample from $F$, we draw $X_1^*,X_2^*,\dots,X_n^*$ i.i.d.\ from $F_n$ and generate $\Astar_{ij} \sim \Bern( \kappa(X_i^*,X_j^*) )$.
Since we do not observe the latent positions $X_1,X_2, \dots, X_n$,  a natural approach is to produce latent position estimates $\Xhat_1,\Xhat_2,\dots,\Xhat_n$ from $A$, and use their empirical distribution $\Fhat_n$ in place of $F_n$.
Then we can generate bootstrap network samples $\Ahat^*$ by drawing from a latent space model with latent position distribution $\Fhat_n$.
Once again, provided that the estimates are good approximations to the true latent positions, we may expect $\Fhat_n$ to be a good approximation to $F_n$, which is in turn a good approximation to $F$.
We may then expect that the distribution of $\Ahat^*$ is a good
approximation to the distribution of $A$.
Indeed, we will prove below that if $H$ is an independent copy of $A$,
$\Ahat^*$ and $H$ converge in a suitably-defined Wasserstein metric.

Defining a Wasserstein distance on graphs first requires a notion of
distance on graphs.
There are a few such distances in the literature,
most notably the cut metric \citep{Lovasz2012}.
In Section~\ref{sec:netboot}, 
we consider a different notion of distance on graphs,
which we call the {\em graph matching distance}.
The graph matching distance is an upper bound on the cut metric
and is designed to take more global information into account,
rather than capturing only the local information conveyed by
a single graph cut.
Working under a particular latent space model, we will show that our 
network resampling scheme outlined above does indeed produce
bootstrap samples $\Ahat^*$
that converge to the distribution of $A$ in the Wasserstein distance
defined with respect to the graph matching distance.

Finally, we note that while we restrict our attention in this paper to the random dot product graph for the sake of concreteness and notational simplicity, the basic ideas of this paper are applicable for general latent space models, as long as the latent positions can be estimated at a suitable rate.
This point is discussed in more detail at the end of Section \ref{subsec:prelim:RDPG}. 

The rest of this paper is structured as follows.
In Section~\ref{sec:prelim}, we briefly review the necessary background related to the random dot product graph and U-statistics.
Section~\ref{sec:ustat} presents our method and theoretical results for bootstrapping network U-statistics, and Section~\ref{sec:netboot} covers our method for generating bootstrap samples of whole networks.
Section~\ref{sec:expts} gives a brief experimental demonstration of both of these methods.  We conclude in Section~\ref{sec:conclusion} with a brief discussion and directions for future work.

\section{Background and Preliminaries} \label{sec:prelim}

In this section, we provide a brief review of the random dot product
graph and its basic properties, as well as necessary background
related to U-statistics and the bootstrap. We start by establishing notation.  

\subsection{Notation}
For a positive integer $n$, let $[n]$  denote the set $\{1,2,\dots,n\}$ and let $S_n$ denote the set of permutations of $[n]$.   
For two integers $m < n$, let $\calC^n_m$ denote the set of all ordered $m$-tuples of distinct elements of $[n]$,
\begin{equation*}
\calC^n_m 
= \{ (i_1,i_2,\dots,i_m) \in [n]^m : 1 \le i_1 < i_2 < \dots < i_m \le n \}.
\end{equation*}
We use the superscript $T$ to denote vector or matrix transpose.  For a vector $x$, we use
$\| x \|$ to denote its Euclidean norm.
For a matrix $M$, we use  $\| M \|$ to denote the spectral norm,
$\|M\|_F$ to denote the Frobenius norm, and
$\|M\|_1 = \sum |M_{ij}|$.  Throughout, we use $C$ to denote a generic positive constant, not depending on the network size, whose value may change from line to line or within the same line.

\subsection{The Random Dot Product Graph} \label{subsec:prelim:RDPG}

The random dot product graph \citep[RDPG][]{YouSch2007,AthFisLevLyzParQinSusTanVogPri2018} is a a latent space network model \citep{HofRafHan2002} in which the latent positions are points in Euclidean space, and edge probabilities are given by inner products of the latent positions.

\begin{definition}[Random dot product graph]
Let $F$ on $\bbR^d$ be a $d$-dimensional {\em inner product distribution}, meaning that $0 \le x^Ty \le 1$ for all $x,y \in \supp F$.
Let $X_1,X_2,\dots,X_n$ be drawn i.i.d.\ from $F$ and arranged as the rows of $X \in \bbR^{n \times d}$.
Conditional on $X$, generate the symmetric adjacency matrix $A \in \bbR^{n \times n}$ by independently drawing $A_{i,j} \sim \Bern(X_i^T X_j)$ for all $1 \le i < j \le n$.
We say that $A$ is a RDPG with latent position distribution $F$, and write $(A,X) \sim \RDPG(F,n)$.
\end{definition}

We note immediately that the RDPG is not fully identifiable, since any orthogonal transformation of the latent positions $X$ preserves inner products and yields the same distribution for $A$.   
Thus, we can only hope to recover the latent positions up to an 
orthogonal transformation, and we will therefore only consider U-statistics that are invariant to such orthogonal transformations.
Non-identifiability of this type is essentially unavoidable in latent
space models; see \cite{ShaAst2017}.  
Throughout, we will assume without loss of generality
that the second moment matrix of $F$,
$\Delta = \E X_1X_1^T \in \bbR^{d \times d}$ is of full rank,
since otherwise we may restrict ourselves to an equivalent
lower-dimensional model in which $\Delta$ has full rank.

The main appeal of the RDPG relative to other latent space models is that the latent positions can be estimated by spectral methods.    The truncated singular value decomposition of the adjacency matrix gives accurate (up to an orthogonal transformation) 
estimated latent positions, referred to as the {\em adjacency spectral embedding} in the RDPG literature \citep{SusTanFisPri2012}.

\begin{definition}[Adjacency spectral embedding]
Let $\SA \in \bbR^{d \times d}$ be the diagonal matrix formed by the
top $d$ largest-magnitude eigenvalues of the adjacency matrix $A$,
and let $\UA \in \bbR^{n \times d}$ be the matrix with the corresponding eigenvectors as its columns.  The adjacency spectral embedding of $A$ is defined as $\ASE(A,d) = \UA \SA^{1/2} \in \bbR^{n \times d}$.
\end{definition}

The rows of $\Xhat = \ASE(A,d)$ are estimates of the latent positions, with a guaranteed convergence rate. 
\begin{lemma}[\cite{LyzSusTanAthPri2014}, Lemma 5]
\label{lem:2toinfty}
Let $(A,X) \sim \RDPG(F,n)$.
Then with
probability at least $1-Cn^{-2}$ there exists an orthogonal
matrix $Q \in \bbR^{d \times d}$ such that
\begin{equation*}
\max_{i \in [n]} \| Q^T \Xhat_i - X_i \| \le \frac{ C \log n }{ \sqrt{n} }. 
\end{equation*}
\end{lemma}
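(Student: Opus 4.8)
The plan is to treat this as an entrywise (that is, maximum-row-norm, $2\to\infty$) eigenvector perturbation problem. Write $P = XX^T$ for the conditional edge-probability matrix and $E = A - P$ for the noise. Because $F$ is an inner product distribution we have $\|X_i\| \le 1$, so the entries of $E$ are bounded and, off the diagonal, independent and mean-zero conditional on $X$; the diagonal discrepancy $\diag(P)$ (present because $A$ has no self-loops) has spectral norm at most $1$ and will be absorbed as a lower-order term. The first ingredient is a spectral concentration bound $\|E\| \le C\sqrt{n}$ holding with probability at least $1 - Cn^{-2}$, obtained from a matrix concentration inequality (matrix Bernstein / Oliveira-type) for a sum of independent bounded symmetric matrices.

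Second, I would pin down the relevant eigenvalues, the eigengap, and the delocalization of $\UP$. Since $n^{-1}X^TX \to \Delta$ with $\Delta$ of full rank, the $d$ nonzero eigenvalues of $P$ are all of order $n$ and are separated from the zero eigenvalues by a gap of order $n$; Weyl's inequality combined with $\|E\| = O(\sqrt{n})$ then forces the top $d$ eigenvalues of $A$ (the entries of $\SA$) to be of order $n$ as well, so that $\|\SA^{-1/2}\| = O(n^{-1/2})$ and $\|\SA^{1/2}\| = O(n^{1/2})$. I also record $\max_i \|(\UP)_i\| \le C/\sqrt{n}$: writing the SVD $X = \UP \SP^{1/2} V^T$ with $V$ orthogonal, each row obeys $\|(\UP)_i\| = \|\SP^{-1/2} V^T X_i\| \le \|\SP^{-1/2}\|\,\|X_i\| \le C/\sqrt{n}$.

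Third, and this is the crux, I would expand $\Xhat$ row by row rather than rely on subspace bounds alone. From $A\UA = \UA \SA$ we get $\Xhat = \UA \SA^{1/2} = A\UA \SA^{-1/2}$; substituting $A = P + E$ and $P = XX^T$ gives $\Xhat = X(X^T\UA \SA^{-1/2}) + E\UA \SA^{-1/2}$. Choosing the orthogonal alignment $Q$ from the Procrustes problem between $\Xhat$ and $X$ makes $X^T\UA \SA^{-1/2}$ close to $Q^T$, so that $\Xhat Q - X$ (whose $i$-th row norm is exactly $\|Q^T\Xhat_i - X_i\|$) rearranges into a leading term $E\UP \SP^{-1/2}(\cdots)$ plus remainders. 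The leading term is controlled row-wise: $e_i^T E\UP = \sum_j E_{ij}(\UP)_j$ is a sum of $n$ independent, bounded, mean-zero vectors whose coordinatewise variances sum to at most $1$ (the columns of $\UP$ are unit vectors) and whose increments are $O(1/\sqrt{n})$ by the delocalization bound. A vector Bernstein inequality with a union bound over $i \in [n]$ then yields $\max_i \|e_i^T E\UP\| \le C\sqrt{\log n}$ with probability at least $1 - Cn^{-2}$; multiplying by $\|\SP^{-1/2}\| = O(n^{-1/2})$ gives a leading contribution of order $\sqrt{\log n / n}$, which is within the stated rate.

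The main obstacle is precisely this last, entrywise step. A direct application of Davis--Kahan gives only $\|\UA - \UP W\| = O(\|E\|/n) = O(n^{-1/2})$ in spectral or Frobenius norm, and since $\Xhat = \UA \SA^{1/2}$ rescales by $\SA^{1/2} = \Theta(\sqrt{n})$, this crude bound would leave the row errors at $O(1)$ --- useless. Converting the uniform subspace bound into a uniform \emph{row} bound requires either a leave-one-out decoupling argument (replacing $\UA$ by a surrogate independent of the $i$-th row of $E$) or the explicit expansion above with careful bookkeeping, in which each remainder term must be shown to be of order at most $\log n/\sqrt{n}$ using the spectral and eigengap bounds from the first two steps. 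Assembling the relevant high-probability events, each of the form $1 - Cn^{-2}$, via a union bound then yields the stated conclusion.
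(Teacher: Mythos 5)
The paper offers no proof of this lemma at all --- it is imported verbatim from Lyzinski et al.\ (2014, Lemma 5) --- so the relevant comparison is with that cited argument, which this paper's own appendix largely reconstructs for its other results. Your sketch is correct in outline and follows essentially that same route: the decomposition $\Xhat Q - X = A\UA(\SA^{-1/2}Q - Q\SP^{-1/2}) + A(\UA Q - \UP)\SP^{-1/2} + (A-P)\UP\SP^{-1/2}$, with row-wise Hoeffding/Bernstein plus delocalization of $\UP$ controlling the leading term $(A-P)\UP\SP^{-1/2}$ and spectral-norm/eigengap bounds controlling the remainders (rather than a bare Davis--Kahan bound, which, as you correctly note, is too crude), is precisely the machinery of the cited proof and of this paper's Appendix~\ref{apx:gen} (Lemma~\ref{lem:rdpgfacts} together with Lemmas~\ref{lem:frobnorm1} and~\ref{lem:frobnorm2}).
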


Lemma~\ref{lem:2toinfty} provides evidence that the plug-in procedure sketched out 
in Section~\ref{sec:intro} may succeed in the limit.
We will formalize this intuition in Section~\ref{sec:ustat}

A primary drawback of the RDPG as defined above is that it produces
only networks whose expected adjacency matrices are positive definite.
This restriction can be removed by instead considering the
generalized RDPG \citep{RubPriTanCap2017}, in which the inner product
$X_i^T X_j$ is replaced with $X_i^T I_{p,q} X_j$,
where $I_{p,q}$ is a diagonal matrix with $p$ ones and $q$ negative ones
on its diagonal.
The rooted graph distribution \citep{Lei2018graphs} further generalizes
this model to allow for latent positions residing in
an infinite-dimensional Kre\v{\i}n space.
In both models, an eigenvalue truncation quite similar
to the ASE defined above recovers the latent positions uniformly
(subject to certain spectral decay assumptions in the case of
the rooted graph distribution).
While both of these generalizations are useful, they come at the
expense of added notational complexity that would not add to the
core ideas of this paper.
Thus, we restrict our attention here to the RDPG, while noting that our
results can be extended with minimal additional assumptions
to these two more general models.

\subsection{Bootstrapping U-statistics}
\label{subsec:prelim:ustat}

Given a measurable function $h : \calX^m \rightarrow \bbR$
symmetric in its $m$ arguments
and a sample $X_1,X_2,\dots,X_n$ drawn i.i.d.\ from some distribution
$F$ on $\calX$, the U-statistic with kernel $h$ is given by
\begin{equation} \label{eq:def:ustat}
U_n = U_n(h) = \frac{1}{\binom{n}{m}}
		\sum_{c \in \calC^n_m} h(X_{i_1},X_{i_2,},\dots,X_{i_m}).
\end{equation}
The study of quantities of this form dates to
\cite{Hoeffding1948}, and we refer the interested reader
to Chapter 5 of \cite{Serfling1980} for a more thorough overview.
Suppose that $\theta(F) = \E h(X_1,X_2,\dots,X_m)$
is some parameter or quantity of interest,
where the expectation is taken with respect to $X_1,X_2,\dots,X_m \iid F$,
A classic result states that if $X_1,X_2,\dots,X_n \iid F$, then
provided the kernel $h$ is non-degenerate with respect to $F$
(i.e., that $h_1(z) = \E h(z,X_2,X_3,\dots,X_m)$ is not a constant in $z$),
$\sqrt{n}(U_n - \theta(F))$ converges in distribution to a
mean-$0$ normal with variance $m^2 \zeta_1$, where
\begin{equation} \label{eq:def:zeta}
  \zeta_1  
        = \E\left(\E[ h(X_1,X_2,\dots,X_m) \mid X_1 ] - \theta \right)^2.
\end{equation}
Since this quantity is typically unknown, in order to
obtain a confidence interval for $\theta(F)$,
the bootstrap can be used to approximate the sampling distribution of $U_n$.
Bootstrapping U-statistics has received much attention
in the literature
\citep[see, e.g.,][to name just a few]{BicFre1981,ArcGin1992,HusJan1993,BosCha2018}.
A bootstrap for non-degenerate U-statistics was introduced in
\cite{BicFre1981}, who showed that 
as long as $\E h(X_1,X_1,\dots,X_1) < \infty$, one can draw
$X_1^*,X_2^*,\dots,X_n^* \iid F_n$ and consider the estimate
\begin{equation*}
\binom{n}{m}^{-1} \sum_{\ivec \in \calC^n_m}
        h(X_{i_1}^*,X_{i_2}^*,\dots,X_{i_m}^*).
\end{equation*}
As pointed out in \cite{BicFre1981},
simply resampling from $F_n$ in this way fails when the kernel $h$ is
degenerate, but a weighted bootstrap can be used instead
\citep{ArcGin1992,HusJan1993}.
While we do not consider degenerate kernels here,
weighted bootstrap schemes can also yield substantial computational
speedups. Thus, following \cite{BosCha2018}, we consider the quantity
\begin{equation} \label{eq:def:efron}
U^*_n =
\binom{n}{m}^{-1} \sum_{c \in \calC^n_m}
        \bbW_{c}~
        h(X_{i_1},X_{i_2},\dots,X_{i_m}),
\end{equation}
where $\bbW \in \bbR^{\calC^n_m}$ is a vector of random weights.
Taking $\bbW_{c} = \prod_{k=1}^m W_{i_k}$
with $W \sim \Multinomial(n,n^{-1})$, one recovers the ``Efron-weighted''
bootstrap \citep{ArcGin1992,HusJan1993}.
Taking
\begin{equation*}
  \bbW_{c} = \frac{W_{i_1} + \cdots + W_{i_m}}{m\sqrt{1-1/n}},
\end{equation*}
one obtains the
additive bootstrap discussed at length in Chapter 4 of \cite{BosCha2018}.
Under suitable conditions on the weight vector $\bbW$,
the quantity in Equation~\eqref{eq:def:efron} converges in distribution
to the same $N(0,m^2 \zeta_1)$ limit as $U_n$.
The specific conditions on $\bbW$ needed to ensure this convergence
vary from one paper to another, but provided those conditions are
met, $U^*_n$ is {\em distributionally consistent},
in the sense that  the distribution of the bootstrap
sample $\sqrt{n}(U^*_n-\theta)$ matches that of $\sqrt{n}(U_n-\theta)$
in the limit as $n \rightarrow \infty$.








%

\section{Bootstrapping Latent Position U-statistics}
\label{sec:ustat}

Our aim in this section is to obtain bootstrap samples of
a U-statistic $U_n = U_n(h)$,
which is a function of the latent positions $X_1,X_2,\dots,X_n$.
The obstacle is that we only observe the adjacency matrix $A$, not the latent positions themselves. We will show below that using the ASE estimates $\Xhat_1,\Xhat_2,\dots,\Xhat_n$ in place of the true latent positions results in a U-statistic that converges to $U_n$ almost surely. Further, bootstrapping the resulting plug-in U-statistic yields asymptotically equivalent bootstrap samples to what we would have obtained
by following the schemes in Section~\ref{subsec:prelim:ustat} if the latent positions were observable.   Before presenting these convergence results,
we highlight a few examples of network quantities that are expressible
as U-statistics in the latent positions under the RDPG model.

\subsection{Network U-statistics: Examples}
\begin{example}[Average Degree]
\normalfont
Consider the (normalized) average degree,
\begin{equation*}
\dbar(A) = \frac{1}{n} \sum_{i=1}^n \frac{ d_i }{n-1}
        = \frac{1}{n} \sum_{i=1}^n \frac{ \sum_{j\neq i} A_{i,j} }{n-1} \ . 
\end{equation*}
Under the RDPG, its conditional expectation is 
\begin{equation*} \begin{aligned}
\E[ \dbar(A) \mid X] 
&= \E \frac{1}{n(n-1)} \sum_{i=1}^n \sum_{j\neq i} \E[ A_{i,j} \mid X_i,X_j ]
= 2\binom{n}{2}^{-1} \sum_{i < j} X_i^T X_j,
\end{aligned} \end{equation*}
which is a U-statistic
with kernel $h(x,y) = 2x^T y$.
\end{example}

\begin{example}[Subgraph Counts]
\normalfont
Let $R$ and $G$ be graphs on $m$ and $n$ vertices, respectively,
with $m \le n$.
Numbering the vertices of $G$ arbitrarily,
for $c = (i_1,i_2,\dots,i_m) \in \calC^n_m$,
let $G[c]$ denote the $m$-vertex subgraph of $G$ induced by
$i_1,i_2,\dots,i_m$ and consider the quantity
\begin{equation} \label{eq:def:sgcount}
  \Phat(R) = \frac{1}{\binom{n}{m}}
		\sum_{c \in \calC^n_m} \indic\{ G[c] \simeq R \},
\end{equation}
where we write $H \simeq R$ to denote that
graphs $H$ and $R$ are isomorphic.
$\Phat(R)$ thus measures the (empirical) proportion of times that $R$ appears as a subgraph
out of the total number of possible subgraphs on $m$ vertices.
Letting $B \in \bbR^{m \times m}$ be the adjacency matrix of graph $R$,
we can write
\begin{equation*}
\E[ \Phat(R) \mid X ] = \frac{1}{\binom{n}{m}}
		\sum_{c \in \calC^n_m}
		\sum_{\tau \in S_m}
		\frac{ \prod_{1 \le k < \ell \le m}
		(X_{i_{\tau(k)}}^T X_{i_{\tau(\ell)}})^{B_{k\ell}}
		(1-X_{i_{\tau(k)}}^T X_{i_{\tau(\ell)}})^{1-B_{k\ell}} }
		{ N(R) }
\end{equation*}
where $N(R)$ denotes the number of graphs isomorphic to $R$.
From this, it is easy to see that
$\E[ \Phat(R) \mid X ]$ is a U-statistic with kernel
\begin{equation*}
h_R(x_1,x_2,\dots,x_m) =
	\frac{1}{N(R)} \sum_{\tau \in S_m}
                \prod_{1 \le k < \ell \le m}
                (x_{i_{\tau(k)}}^T x_{i_{\tau(\ell)}})^{B_{k\ell}}
                (1-x_{i_{\tau(k)}}^T x_{i_{\tau(\ell)}})^{1-B_{k\ell}}.
\end{equation*}
\end{example}

\begin{example}[Maximum Mean Discrepancy] \normalfont\normalfont
\normalfont
The maximum mean discrepancy \citep[MMD;][]{GreBorRasSchSmo2012}
is a test statistic for nonparametric two-sample hypothesis testing.
Given $\lambda \in [0,1]$ and two distributions $F_1$ and $F_2$
supported on the same compact metric space,
let $X_1,X_2,\dots,X_n$ be drawn i.i.d.\ from the mixture
$\lambda F_1 + (1-\lambda) F_2$,
and let 
$Y_i = 1$ if $X_i \sim F_1$ and $Y_i = 0$ otherwise, for $i \in [n]$.
Letting $I_1 = \{i: Y_i=1\}$ and $n_1 = |I_1|$
and defining $I_2$ and $n_2$ analogously, the MMD is given by
\begin{equation*}
M_n = 
\sum_{i,j \in I_1 \text{ distinct}}
		\frac{ \kappa(X_i,X_j) }{ n_1(n_1-1) }
        + \sum_{ i,j \in I_2 \text{ distinct}}
		\frac{ \kappa(X_i,X_j) }{ n_2(n_2-1)}
        - \sum_{i \in I_1} \sum_{j \in I_2}
                    \frac{\kappa(X_i, X_j) }{n_1 n_2} ,
\end{equation*}
where $\kappa$ is the kernel of a reproducing kernel Hilbert space.
By definition, $M_n$ is a U-statistic in
$X_1,X_2,\dots,X_n$ with kernel
$h((x_i,y_i),(x_j,y_j)) = (-1)^{y_i-y_j}\kappa(x_i,x_j)$.
Suppose that $F_1$ and $F_2$ are such that $F = \lambda F_1 + (1-\lambda)F_2$
is a $d$-dimensional inner product distribution,
and suppose that we observe
$(A,X) \sim \RDPG(F,n)$ along with the
indicators $Y_1,Y_2,\dots,Y_n$.
A natural approach to testing the hypothesis
\begin{equation*}
H_0 : F_1 = F_2
\end{equation*}
is to form the MMD test statistic from the (estimated) latent positions,
\begin{equation*}
\Mhat_n =
\sum_{i,j \in I_1 \text{ distinct}}
		\frac{ \kappa(\Xhat_i,\Xhat_j) }{ n_1(n_1-1) }
        + \sum_{ i,j \in I_2 \text{ distinct}}
		\frac{ \kappa(\Xhat_i,\Xhat_j) }{ n_2(n_2-1)}
        - \sum_{i \in I_1} \sum_{j \in I_2}
                    \frac{\kappa(\Xhat_i, \Xhat_j) }{n_1 n_2} ,
\end{equation*}
where $\Xhat = \ASE(A,d)$.
A slight variant of this statistic was considered by
\cite{TanAthSusLyzPri2017} for the purpose of two-network
hypothesis testing.
\end{example}

\begin{example}[Degree Moments]
\normalfont
The degree distribution of a graph carries important information about
graph structure.
Measures such as the variance of the degrees,
\begin{equation*}
V_d(A) = n^{-2} \sum_{i,j} \left(\frac{d_i - d_j}{n}\right)^2,
\end{equation*}
where $d_i = \sum_k A_{i,k}$ is the degree of the $i$-th vertex,
provide a useful summary of vertex behavior. 
Rearranging the sum, we have 
\begin{equation*}
\E[ V_d(A) \mid X]
= n^{-4} \sum_{i,j,k,\ell} \E\left[ A_{i,k}(A_{i,\ell} - A_{j,\ell}) \mid X \right]
= n^{-4} \sum_{i,j,k,\ell} X_i^TX_k( X_i - X_j)^T X_\ell,
\end{equation*}
which is a V-statistic \citep{Serfling1980} in the latent positions
after appropriate symmetrization.
Similar results can be shown for other central moments of the degree
distribution.   
\end{example}

A number of other network quantities are
expressible similarly, either under a different latent geometry
or after appropriate rescaling by
some network-dependent quantity that converges almost surely to
a parameter depending only on the latent position distribution.
Examples include
measures of assortative mixing by degrees \citep{Newman2010},
energy statistics \citep{SzeRiz2013,LeeShePriVog2017}
and Randi\'{c}'s connectivity index \citep{Randic1975}.



\begin{remark}[Specifying the Parameter of Interest]
Among the examples above, we have seen statistics of the form $t : \{0,1\}^{n \times n} \rightarrow \bbR$ (e.g., the average degree),
where $\E[ t(A) \mid X ]$ is expressible as a U-statistic in the latent positions.
We stress, however, that the target of inference here and in what follows is
$\E[ t(A) ]$, rather than the conditional expectation $\E[ t(A) \mid X ]$ or, say, $t( \E A)$.
Indeed, it is precisely because $\E[ t(A) \mid X]$ is expressible as a U-statistic in the latent positions that we are able to construct a confidence interval for $\E h(X_1,X_2,\dots,X_m) = \E t( A )$.
\end{remark}

\subsection{Consistency of Network U-statistic Bootstrap}

Having seen how U-statistics arise in the statistical analysis of networks,
we return to the setting where the latent positions $X_1,X_2,\dots,X_n$ are not available and only the matrix $A$ is observed.
Letting $\Xhat_1,\Xhat_2,\dots,\Xhat_n \in \bbR^d$ be the rows of
$\Xhat = \ASE(A,d)$, we consider the plug-in U-statistic,
\begin{equation} \label{eq:def:Uhat}
\Uhat_n = \binom{n}{m}^{-1} \sum_{1 \le i_1 < i_2 < \dots < i_m \le n}
        h(\Xhat_{i_1},\Xhat_{i_2},\dots,\Xhat_{i_m}).
\end{equation}
If this quantity is to resemble $U_n$, the kernel $h$ must be invariant to the non-identifiability inherent to the random dot product graph,
and thus we make the following assumption.
\begin{assumptionu} \label{assumu:hinvariant}
Let $\bbO_d$ denote the set of all $d$-by-$d$ orthogonal matrices.
The kernel $h$ satisfies
\begin{equation*} 
  h(X_1,X_2,\dots,X_n) = h(QX_1,QX_2,\dots,QX_n)~~\text{ for all }~~Q \in \bbO_d.
\end{equation*}
\end{assumptionu}
The main results of this section state that
for suitably smooth kernel functions,
the plug-in estimate in Equation~\eqref{eq:def:Uhat}
and bootstrap samples formed from it
are asymptotically equivalent to the U-statistic formed from the true
latent positions $X_1,X_2,\dots,X_n \iid F$.
The following assumption makes this notion of smoothness precise. 
\begin{assumptionu} \label{assumu:smooth}
Let $\nabla^2 h : \bbR^{md} \rightarrow \bbR^{md \times md}$
denote the Hessian of kernel $h$.  We assume that 
$\nabla^2 h$ is continuous on the closure of $\supp F$ and there exists
a neighborhood $\calS \subseteq (\bbR^d)^m$ of $\supp F$ satisfying
\begin{equation*} 
\sup \{ \| \nabla^2 h(Z_1,Z_2,\dots,Z_m) \| : (Z_1,Z_2,\dots,Z_m) \in \calS \} < \infty.
\end{equation*}
\end{assumptionu}

The following theorem shows that Assumptions~\ref{assumu:hinvariant} and~\ref{assumu:smooth} 
are sufficient to ensure that the plug-in
U-statistic $\Uhat_n$ recovers $U_n$ asymptotically.
The proof is given in Appendix~\ref{apx:inprob}.

\begin{theorem} \label{thm:ustat:consistent}
Let $F$ be a $d$-dimensional inner product distribution
and suppose that 
$h: (\bbR^d)^m \rightarrow \bbR$ is a
symmetric kernel satisfying Assumptions~\ref{assumu:hinvariant}
and~\ref{assumu:smooth}.
Suppose $(A,X) \sim \RDPG(F,n)$ 
and let $\Xhat = \ASE(A,d)$. 
Let $U_n$ and $\Uhat_n$ be the U-statistics based on, respectively,
the true latent positions $X$ 
and their ASE estimates $\Xhat$.
Then $\sqrt{n}(\Uhat_n - U_n) \rightarrow 0$ almost surely. %
\end{theorem}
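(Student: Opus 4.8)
The plan is to expand the kernel to second order around the true latent positions and then bound the two resulting pieces separately: the second-order remainder directly from the uniform bound of Lemma~\ref{lem:2toinfty}, and the first-order term via a finer, CLT-level linearization of the embedding error, where cancellation is essential. I expect the first-order term to be the main obstacle, since the estimate of Lemma~\ref{lem:2toinfty} alone is too crude there.

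First I would dispose of the orthogonal non-identifiability and fix an almost-sure working event. By Lemma~\ref{lem:2toinfty}, for each $n$ there is an event of probability at least $1-Cn^{-2}$ on which some $Q\in\bbO_d$ satisfies $\max_{i\in[n]}\|Q^T\Xhat_i-X_i\|\le Cn^{-1/2}\log n$; since $\sum_n n^{-2}<\infty$, Borel--Cantelli guarantees this holds for all large $n$ almost surely, so it suffices to argue on this event. Write $\delta_i=Q^T\Xhat_i-X_i$. By Assumption~\ref{assumu:hinvariant} I may replace each $\Xhat_{i_k}$ by $Q^T\Xhat_{i_k}=X_{i_k}+\delta_{i_k}$ inside $\Uhat_n$ without changing its value. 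Since an inner product distribution is supported in the unit ball, $\overline{\supp F}$ is compact; as $\max_i\|\delta_i\|\to0$, for all large $n$ every perturbed tuple---and the segment joining it to the true tuple---lies in the neighborhood $\calS$ of $(\supp F)^m$ from Assumption~\ref{assumu:smooth}, on which $\nabla^2 h$ is bounded by some $M<\infty$, and $\nabla h$ is bounded by some $L<\infty$ on the compact $(\overline{\supp F})^m$.

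Taylor's theorem with Lagrange remainder then gives, for each tuple $c=(i_1,\dots,i_m)$,
\begin{equation*}
h(X_{i_1}+\delta_{i_1},\dots,X_{i_m}+\delta_{i_m}) - h(X_{i_1},\dots,X_{i_m}) = \sum_{k=1}^m \nabla_k h(\mathbf{X}_c)^T\delta_{i_k} + \tfrac12\,\eta_c^T\nabla^2 h(\xi_c)\,\eta_c,
\end{equation*}
where $\eta_c=(\delta_{i_1},\dots,\delta_{i_m})$ and $\xi_c\in\calS$. The remainder is bounded by $\tfrac12 M\|\eta_c\|^2\le\tfrac12 Mm\max_i\|\delta_i\|^2=O(n^{-1}\log^2 n)$ uniformly in $c$, so averaging over $c$ and multiplying by $\sqrt n$ leaves $O(n^{-1/2}\log^2 n)\to0$; this piece is routine.

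The first-order term is the crux. Averaging and using symmetry of $h$, it equals $\sum_{i=1}^n g_i^T\delta_i$, where $g_i=\binom nm^{-1}\sum_{c\ni i}\nabla_{(i)}h(\mathbf{X}_c)$ collects the partial gradients at the slot occupied by $i$; since only $\binom{n-1}{m-1}$ tuples contain $i$ and each partial gradient is bounded by $L$, one has $\|g_i\|\le(m/n)L=O(n^{-1})$. The naive bound $\sqrt n\sum_i\|g_i\|\|\delta_i\|=O(\log n)$ does \emph{not} vanish, so I must exploit cancellation among the $\delta_i$. Here I would invoke the standard linearization of the adjacency spectral embedding from the RDPG central limit theory \citep{AthFisLevLyzParQinSusTanVogPri2018}, namely $\delta_i=n^{-1}\Delta^{-1}\sum_{j\neq i}(A_{ij}-X_i^TX_j)X_j+R_i$ with $\max_i\|R_i\|=o(n^{-1/2})$ on the working event (taking care to match the orthogonal transformation to the $Q$ of Lemma~\ref{lem:2toinfty}). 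Substituting, the leading part of $\sqrt n\sum_i g_i^T\delta_i$ becomes $n^{-1/2}\sum_{i\neq j}w_{ij}(A_{ij}-X_i^TX_j)$ with $w_{ij}=g_i^T\Delta^{-1}X_j=O(n^{-1})$; conditional on $X$ these are independent, mean-zero, bounded summands, so the sum has conditional variance $\sum_{i\neq j}w_{ij}^2\,\mathrm{Var}(A_{ij})=O(1)$, giving $O_P(n^{-1/2})$, while the remainder contributes at most $C\sqrt n\max_i\|R_i\|=o(1)$. A Bernstein bound together with Borel--Cantelli upgrades this to almost-sure convergence, completing the argument. The delicate bookkeeping---ensuring that both the Taylor remainder and the linearization remainder $R_i$ are summably negligible, and that the finer entrywise expansion (rather than merely Lemma~\ref{lem:2toinfty}) is available---is exactly where the real work lies.
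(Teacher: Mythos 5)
Your proposal is correct and structurally it is the paper's own argument: a second-order Taylor expansion of $\Uhat_n-U_n$, a crude uniform bound from Lemma~\ref{lem:2toinfty} that kills the $\sqrt{n}$-scaled quadratic term, and a cancellation argument for the linear term driven by conditional concentration of the independent, mean-zero entries of $A-P$. In fact your leading term $n^{-1}\Delta^{-1}\sum_{j\neq i}(A_{ij}-X_i^TX_j)X_j$ is exactly the paper's third piece $(A-P)\UP\SP^{-1/2}$ in its decomposition $\Xhat Q - X = A\UA(\SA^{-1/2}Q-Q\SP^{-1/2}) + A(\UA Q-\UP)\SP^{-1/2} + (A-P)\UP\SP^{-1/2}$ (note $\UP\SP^{-1/2} = X(X^TX)^{-1}$ and $n^{-1}X^TX\to\Delta$), your remainder $R_i$ corresponds to the first two pieces, and your Bernstein step is the paper's Hoeffding argument on $\sum_{i<j}\bMtilde_{i,s}(A-P)_{i,j}u_{s,j}$. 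The real difference is provenance: the paper proves the linearization internally, bounding the two remainder pieces in Frobenius norm by $O(n^{-1/2}\log n)$ (Lemmas~\ref{lem:frobnorm1} and~\ref{lem:frobnorm2}), whereas you import it as an off-the-shelf fact. The one caveat is the form you import: $\max_i\|R_i\| = o(n^{-1/2})$ uniformly over rows is strictly stronger than the per-row, in-probability remainder control that the CLT in the cited survey provides, and establishing it uniformly is precisely the hard entrywise step (a naive row-wise bound on $A(\UA Q-\UP)\SP^{-1/2}$ gives only $O(n^{-1/2}\log^{1/2}n)$, which is not $o(n^{-1/2})$). This is not fatal, because you never actually need a max-row bound: since $\|g_i\| = O(n^{-1})$, Cauchy--Schwarz gives $\sqrt{n}\sum_i\|g_i\|\,\|R_i\| \le \sqrt{n}\,(\sum_i\|g_i\|^2)^{1/2}\,\|R\|_F = O(\|R\|_F) = O(n^{-1/2}\log n) \to 0$, so the Frobenius-norm control that is genuinely standard (and that the paper proves) suffices. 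With that substitution your argument is complete and matches the paper's; what the paper's self-contained route buys is exactly this independence from stronger entrywise expansions, at the cost of the explicit spectral decomposition lemmas.
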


From the fact that $U_n$ converges almost surely to
the population parameter $\theta = \theta(F) = \E U_n$
\citep[][Theorem 5.4 A]{Serfling1980},
$\Uhat_n$ is a strongly consistent estimate of $\theta$.
Further, by Slutsky's Theorem,
$\Uhat_n$ has the same distributional limit as $U_n$.
\begin{corollary}
Under the settings of Theorem \ref{thm:ustat:consistent},
the plug-in U-statistic $\Uhat_n$
satisfies $\Uhat_n \rightarrow \E U_n = \theta(F)$ almost surely
and $\sqrt{n}(\Uhat_n - \theta) \inlaw \calN(0,m^2 \zeta_1)$,
where $\zeta_1$ is defined in Equation~\eqref{eq:def:zeta}.
\end{corollary}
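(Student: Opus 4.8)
The plan is to obtain both conclusions by combining Theorem~\ref{thm:ustat:consistent} with the classical strong law and central limit theorem for U-statistics, treating $\Uhat_n - U_n$ as an asymptotically negligible perturbation. Throughout I would hold $F$, $h$, and the RDPG data fixed so that the classical results apply to $U_n$ directly, and then transfer the conclusions to $\Uhat_n$ through the decomposition $\Uhat_n - \theta = (\Uhat_n - U_n) + (U_n - \theta)$.

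For the almost-sure consistency, I would first note that Theorem~\ref{thm:ustat:consistent} gives $\sqrt{n}(\Uhat_n - U_n) \to 0$ almost surely, so in particular $\Uhat_n - U_n = n^{-1/2} \cdot \sqrt{n}(\Uhat_n - U_n) \to 0$ almost surely, being the product of $n^{-1/2} \to 0$ with a sequence converging to $0$. Next, the strong law of large numbers for U-statistics (Serfling, Theorem~5.4A, cited in the text preceding the corollary) yields $U_n \to \theta$ almost surely under the moment condition $\E|h| < \infty$; this condition holds here because an inner product distribution has $\|x\| \le 1$ on $\supp F$ (take $y = x$ in $0 \le x^T y \le 1$), so $\supp F$ is bounded, and $h$ is continuous there by Assumption~\ref{assumu:smooth}, hence bounded. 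Adding the two limits gives $\Uhat_n \to \theta$ almost surely, which is the first claim.

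For the distributional limit, I would apply the same decomposition after scaling by $\sqrt{n}$, writing $\sqrt{n}(\Uhat_n - \theta) = \sqrt{n}(\Uhat_n - U_n) + \sqrt{n}(U_n - \theta)$. The first summand converges to $0$ almost surely by Theorem~\ref{thm:ustat:consistent}, hence $\inprob 0$. The second summand converges in distribution to $\calN(0, m^2 \zeta_1)$ by the classical U-statistic CLT recalled in Section~\ref{subsec:prelim:ustat}. Slutsky's theorem then delivers $\sqrt{n}(\Uhat_n - \theta) \inlaw \calN(0, m^2 \zeta_1)$, completing the argument.

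The only genuine subtlety I would flag is that the classical CLT requires the kernel to be non-degenerate, i.e.\ $\zeta_1 > 0$; Assumptions~\ref{assumu:hinvariant} and~\ref{assumu:smooth} do not by themselves preclude degeneracy, so the statement should be read as applying in the non-degenerate regime (under degeneracy the $\sqrt{n}$ scaling is not the correct one and the limit is non-Gaussian). All of the analytic difficulty has already been absorbed into Theorem~\ref{thm:ustat:consistent}: it is precisely the $\sqrt{n}$-rate control of $\Uhat_n - U_n$ that allows the perturbation to vanish \emph{after} the CLT scaling, so the corollary itself reduces to a routine Slutsky argument rather than posing any new obstacle.
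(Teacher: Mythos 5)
Your proof is correct and takes essentially the same approach as the paper, which likewise combines Theorem~\ref{thm:ustat:consistent} with the strong law for U-statistics (Serfling, Theorem 5.4A) and Slutsky's theorem. Your caveat about non-degeneracy ($\zeta_1 > 0$) is consistent with the paper's own framing: the classical CLT is recalled in Section~\ref{subsec:prelim:ustat} only for non-degenerate kernels, and the degenerate case is explicitly set aside in a separate remark.
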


Our main goal, however, is not establishing convergence of $\Uhat_n$ but the the more delicate task 
of obtaining bootstrap samples
to approximate the sampling distribution of $U_n$.
If we knew the true values of the latent positions,
any number of techniques for bootstrapping U-statistics would work.  The idea is thus to construct, instead of a bootstrap sample $U_n^*$
as in~\eqref{eq:def:efron}, a plug-in version 
\begin{equation*} \label{eq:def:efronhat}
\Uhat^*_n
= \binom{n}{m}^{-1} \sum_{c \in \calC^n_m} 
	\bbW_{c}~
        h(\Xhat_{i_1},\Xhat_{i_2},\dots,\Xhat_{i_m}),
\end{equation*}
where again $\bbW \in \bbR^{\calC^n_m}$ is a vector of random weights
independent of the observed network
and $\Xhat_1,\Xhat_2,\dots,\Xhat_n \in \bbR^d$
are the latent position estimates.
We will assume that these are ASE estimates, 
but we stress that similar results can be obtained 
under any estimation scheme that recovers the latent positions
at a suitably fast rate.
As mentioned in Section~\ref{subsec:prelim:ustat},
the specific conditions on $\bbW$ needed to ensure
the distributional consistency of $\Uhat^*_n$ vary,
but for our plug-in scheme to work, 
we require the following growth condition.
\begin{assumptionw} \label{assumw:growth}
The weight vector $\bbW$ satisfies
\begin{equation*} 
\max_{c \in \calC^n_m} |\bbW_{c}|
= o\left( \frac{ \sqrt{n} }{ \log^2 n } \right).
\end{equation*}
\end{assumptionw}
With this assumption, we have the following theorem for the plug-in U-statistic  bootstrap.  The proof is similar to that of Theorem~\ref{thm:ustat:consistent} and is given in Appendix~\ref{apx:inprob}.
\begin{theorem} \label{thm:Uboot}
Let $F$ be a $d$-dimensional inner product distribution
and suppose that 
$h : (\bbR^d)^m \rightarrow \bbR$ is a symmetric kernel
satisfying Assumptions~\ref{assumu:hinvariant} and~\ref{assumu:smooth}.
Let $(A,X) \sim \RDPG(F,n)$ and  $\Xhat = \ASE(A,d)$.  Let $\zeta_1$ be as defined in Equation~\eqref{eq:def:zeta}.  Then, 
\begin{enumerate}
    \item The ASE plug-in bootstrap
\begin{equation*}
\UBFhat^*_n = \binom{n}{m}^{-1} \sum_{c \in \calC^n_m}
        h(\Xhat_{i_1}^*,\Xhat_{i_2}^*,\dots,\Xhat_{i_m}^*)
\end{equation*}
satisfies
\begin{equation*}
\sqrt{n}(\UBF^*_n - \theta) \inlaw \calN(0,m^2 \zeta_1).
\end{equation*}

\item Let $U^*_n$ be the weighted bootstrapped U-statistic
defined in Equation~\eqref{eq:def:efron} and let $\Uhat^*_n$ be its plug-in version. 
If $U^*_n$ is distributionally consistent
and the weight vector $\bbW \in \bbR^{\calC^n_m}$
satisfies Assumption~\ref{assumw:growth}, then
\begin{equation*}
\sqrt{n}(\Uhat^*_n-\theta) \inlaw \calN(0,m^2 \zeta_1) .
\end{equation*}
\end{enumerate}
\end{theorem}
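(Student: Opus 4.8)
The plan is to reduce both parts to an oracle bootstrap built from the true latent positions, whose distributional consistency is already available, and then to show that substituting the ASE estimates for the true positions perturbs the bootstrap statistic by $o(n^{-1/2})$. For Part 2 I would couple $\Uhat^*_n$ with the oracle weighted statistic $U^*_n$ of Equation~\eqref{eq:def:efron} through the common weight vector $\bbW$; for Part 1 I would couple $\UBFhat^*_n$ with the oracle resampling bootstrap $\UBF^*_n = \binom{n}{m}^{-1}\sum_c h(X^*_{i_1},\dots,X^*_{i_m})$ by applying one set of bootstrap indices to both $\{\Xhat_i\}$ and $\{X_i\}$. In each case it suffices to prove
\begin{equation*}
\sqrt{n}\left(\Uhat^*_n - U^*_n\right)\inprob 0
\qquad\text{and}\qquad
\sqrt{n}\left(\UBFhat^*_n - \UBF^*_n\right)\inprob 0,
\end{equation*}
after which Slutsky's theorem, combined with the distributional consistency of the oracle bootstrap (the assumed consistency of $U^*_n$ for Part 2, and the result of \citet{BicFre1981} for Part 1), delivers the $\calN(0,m^2\zeta_1)$ limit.

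To bound these differences I would mirror the proof of Theorem~\ref{thm:ustat:consistent}. Assumption~\ref{assumu:hinvariant} lets me align $\Xhat$ with $X$ via the orthogonal matrix $Q$ of Lemma~\ref{lem:2toinfty}, writing $\delta_i = Q^T\Xhat_i - X_i$ with $\max_i\|\delta_i\|\le C\log n/\sqrt{n}$ on an event of probability at least $1-Cn^{-2}$. On this event Assumption~\ref{assumu:smooth} justifies a second-order Taylor expansion of $h$ about the true positions, so that each summand $h(\Xhat_{i_1},\dots) - h(X_{i_1},\dots)$ decomposes into a gradient term linear in the $\delta$'s and a Hessian term quadratic in the $\delta$'s, the latter bounded uniformly by $C\max_i\|\delta_i\|^2\le C\log^2 n/n$.

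The quadratic term is the easy one: bounding each $|\bbW_c|$ by $\max_c|\bbW_c|$ and averaging gives a contribution of order $\max_c|\bbW_c|\log^2 n/n$, which is precisely where Assumption~\ref{assumw:growth} earns its rate $o(\sqrt{n}/\log^2 n)$, forcing this piece to be $o(n^{-1/2})$ after the $\sqrt{n}$ scaling (for Part 1 the resampling multiplicities play the role of the weights and are polylogarithmic, so the condition holds automatically). The main obstacle is the linear term $\binom{n}{m}^{-1}\sum_c \bbW_c\sum_k \nabla_k h(X_{i_1},\dots,X_{i_m})^T\delta_{i_k}$: here the crude bound $\|\delta_i\| = O(\log n/\sqrt{n})$ is too lossy, leaving after scaling a residual of order $\log n\cdot\max_c|\bbW_c| = o(\sqrt{n}/\log n)$, which does not vanish. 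The cancellation must instead be extracted from the finer structure of the ASE residuals, which admit a first-order expansion $\delta_i\approx\sum_\ell(A_{i\ell}-X_i^TX_\ell)\psi_\ell$ with mean-zero increments $A_{i\ell}-X_i^TX_\ell$ and coefficient vectors $\psi_\ell$ of order $1/n$. Since the weights $\bbW$ are independent of the network, I would condition on $X$ and $\bbW$, recognize the linear term as a sum of conditionally mean-zero contributions, and control its variance by a second-moment argument; the resulting bound is $o(\sqrt{n})$ for the relevant functional of the residuals, hence $o(n^{-1/2})$ after scaling. The remainder of the ASE expansion is absorbed using the standard higher-order control from the RDPG literature.

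Assembling the pieces, on the high-probability event both the linear and quadratic terms are $o(n^{-1/2})$, giving the two displayed convergences, while the complementary event has probability $O(n^{-2})$ and so cannot affect the limiting law. Distributional consistency of the oracle bootstrap then transfers directly. For Part 1 I would verify the only extra hypothesis required by \citet{BicFre1981}, namely $\E h(X_1,\dots,X_1)<\infty$; this is immediate because Assumption~\ref{assumu:smooth} makes $h$ continuous, and hence bounded, on the compact support of the inner-product distribution $F$.
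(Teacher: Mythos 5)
Your proposal is correct and follows essentially the same route as the paper: a Slutsky reduction to $\sqrt{n}(\Uhat^*_n - U^*_n) \inprob 0$, a second-order Taylor expansion whose quadratic term is killed by the $2\to\infty$ bound together with Assumption~\ref{assumw:growth}, and a refined treatment of the linear term that exploits the mean-zero, linear-in-$(A-P)$ structure of the leading ASE residual (the paper's Lemma~\ref{lem:wtdlinear}, which uses the decomposition $\Xhat Q - X = A\UA(\SA^{-1/2}Q - Q\SP^{-1/2}) + A(\UA Q - \UP)\SP^{-1/2} + (A-P)\UP\SP^{-1/2}$ and Hoeffding's inequality where you propose a conditional second-moment argument). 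Your explicit handling of Part 1 via shared resampling indices, polylogarithmic multinomial weights, and the \citet{BicFre1981} moment condition fills in details the paper leaves as ``a similar argument,'' but does not change the substance.
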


\begin{remark}[The degenerate case]
The reader familiar with U-statistics may wonder what can be said in the
more challenging setting where the kernel $h$ is degenerate with respect
to $F$ \citep[][Chapter 5]{Serfling1980}.
It is known that if $h$ is $r$-degenerate, then
$n^{r/2}(U_n - \theta)$ converges to a nondegenerate limiting distribution,
which is not, in general, normal
\citep[][]{Serfling1980,ArcGin1992}.   A result analogous to Theorem~\ref{thm:Uboot}
for this case, unfortunately, does not appear feasible, since the concentration of
$U_n$ about the true parameter $\theta$ is of
a smaller order than the best known concentration of the estimates
$\Xhat_1,\Xhat_2,\dots,\Xhat_n$ about the true latent positions
\citep{LyzSusTanAthPri2014}.
That is, the estimation error in Lemma~\ref{lem:2toinfty} does not
vanish fast enough to yield convergence of $\Uhat^*_n$ to $U^*_n$
in probability.
\end{remark}

\subsection{Computational concerns}

Both U-statistics and the bootstrap are well known to be
computationally intensive. As a result, a na\"ive implementation of our
positional U-statistic resampling scheme would be of little practical utility for large $n$.
The following additive weighted bootstrapping procedure
alleviates these computational expenses.
This procedure
discussed at length in Chapter 4 of \cite{BosCha2018},
whose presentation we follow.

Consider a U-statistic with kernel $h$ taking $m$ arguments.
Having generated a vector of weights $W \in \bbR^n$,
we form the weight vector $\bbW \in \bbR^{\calC^n_m}$ by setting
$\bbW_c = \sum_{k=1}^m W_{i_k}/m$ for each
$c = (i_1,i_2,\dots,i_m) \in \calC^n_m$.
While a number of choices for the distribution of $W$ are possible
\citep[see][for discussion]{HusJan1993,BosCha2018},
we take $W \sim \Multinomial(n,n^{-1})$ in our experiments
in Section~\ref{sec:expts} for simplicity.
A concentration inequality applied entry-wise to $W$
followed by a union bound is enough to ensure that
$\max_{c \in \calC^n_m} | \bbW_c | \le C \log^m n$,
so that Assumption~\ref{assumw:growth} is satisfied.
The additive structure of $\bbW$ enables a useful computational speedup
in computing bootstrap replicates of $U_n$.
We construct, for each $i \in [n]$, the quantity
\begin{equation} \label{eq:def:BC:Utilde}
 \Utilde_{ni} = \binom{n-1}{m-1}^{-1}
                \sum_{c \in \calC^n_m ~: ~i \in c}
                h(X_{i_1},X_{i_2},\dots,X_{i_m}).
\end{equation}
Recalling our definition of $U^*_n$ from
Equation~\eqref{eq:def:efron}, it is simple to verify that
\begin{equation*}
 U^*_n = n^{-1} \sum_{i=1}^n W_i \Utilde_{ni}.
\end{equation*}
As discussed in \cite{BosCha2018}, this enables generation of
many bootstrap samples after only a single instance of
$O(n^m)$ computation time to construct the $\Utilde_{ni}$,
rather than $O(Bn^m)$ computation time to generate $B$ bootstrap
samples under a more na\"ive implementation.

Unfortunately, the $O(n^m)$ time required to compute
the quantity in Equation~\eqref{eq:def:BC:Utilde} for all $i \in [n]$
may still be quite substantial, particularly if $m$ is larger than $2$.
We can further reduce the computational cost by
making use of incomplete U-statistics \citep{Blom1976,CheKat2019},
replacing the average in~\eqref{eq:def:BC:Utilde}
with a Monte Carlo estimate, drawing for each $i \in [n]$
a uniform random sample of size $M$ with replacement from
the set $\{ c \in \calC^n_m : i \in c \}$.
With this modification, our method for 
obtaining $B$ bootstrap samples for a U-statistic in the latent 
positions requires a single low-rank spectral decomposition
followed by $O( (M+B)n )$ sampling operations.
Thus our bootstrap is far less computationally demanding than existing algorithms for generating bootstrap samples of subgraph counts, which
require extensive sampling and counting operations.

\subsection{Sparse networks}
For notational simplicity, our results above were written under the assumption that the latent position distribution $F$ does not depend on $n$. This results in the expected node degrees growing linearly in $n$, which is unrealistic in many applications.   A natural way to allow for sparser networks under the RDPG is to keep $F$ fixed and rescale the expectation of $A$ by a sparsity factor $\rho_n \rightarrow 0$,
so that $\E[ A \mid X ] = \rho_n X X^T$.
For identifiability, we assume that if $X_1,X_2$ are independent draws from $F$,
then $\E X_1^T X_2 = 1$, and for all suitably large $n$,
\begin{equation} \label{eq:assum:sparsity}
\forall x,y \in \supp F  : 0 \le \rho_n x^T y \le 1.
\end{equation}
Analogous assumptions are made in \cite{BicCheLev2011,BhaBic2015,GreSha2017}.
We may equivalently think of rescaling all latent positions by $\sqrt{\rho_n}$. 
Under this scaling, the ASE plug-in U-statistic $\Uhat_n$ now estimates
$\E h(\sqrt{\rho_n}X_1,\dots,\sqrt{\rho_n}X_m)$
rather than $\E h(X_1,\dots,X_m)$, and thus we must specify how
the kernel $h$ behaves with respect to scaling of its arguments.
A consistency result analogous to Theorem~\ref{thm:ustat:consistent}
can be established under this setting
with an additional homogeneity assumption on $h$.

\begin{theorem} \label{thm:sparse:ustat:consistent}
Under the sparse setting just described,
let $h$ be a kernel satisfying Assumptions~\ref{assumu:hinvariant}
and~\ref{assumu:smooth}, with $\supp F$ replaced with the convex hull of $\{ 0 \} \cup \supp F$.
Suppose in addition that there exists $r \ge 1$ such that
for all $\alpha \ge 0$ and all $x_1,x_2,\dots,x_m \in \supp F$,
\begin{equation} \label{eq:homogeneous}
h(\alpha x_1, \dots, \alpha x_m )
        = \alpha^r h(x_1, \dots,x_m).
\end{equation}
Define the estimator $\rhohat_n = \binom{n}{2}^{-1} \sum_{i<j} A_{ij}$.
Then
\begin{equation*}
\frac{ \sqrt{n}(\Uhat_n - U_n) }{ \rhohat_n^r } \rightarrow 0.
\end{equation*}
\end{theorem}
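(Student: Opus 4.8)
The plan is to follow the proof of Theorem~\ref{thm:ustat:consistent} almost verbatim, the only genuinely new ingredient being the bookkeeping of the powers of $\rho_n$ released by the homogeneity assumption~\eqref{eq:homogeneous}. Writing $Y_i = \sqrt{\rho_n} X_i$, so that $\E[A \mid X] = YY^T$ and the embedding $\Xhat = \ASE(A,d)$ estimates $Y$ up to rotation, homogeneity gives $U_n = \rho_n^{r/2} \binom{n}{m}^{-1} \sum_{c \in \calC^n_m} h(X_{i_1},\dots,X_{i_m})$. First I would dispose of the random normalization. Since $\E[\rhohat_n \mid X] = \rho_n \binom{n}{2}^{-1} \sum_{i<j} X_i^T X_j$ is $\rho_n$ times a U-statistic converging almost surely to $\E X_1^T X_2 = 1$, we have $\rhohat_n / \rho_n \inprob 1$ and hence $(\rho_n/\rhohat_n)^r \inprob 1$; by Slutsky's theorem it then suffices to prove $\sqrt{n}(\Uhat_n - U_n)/\rho_n^r \inprob 0$.

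Using Assumption~\ref{assumu:hinvariant} I would replace each $\Xhat_i$ by its rotated version $Z_i = Q^T \Xhat_i$ inside $\Uhat_n$. The engine of the argument is the row-wise linearization of the embedding error---the sparse analogue of Lemma~\ref{lem:2toinfty}---namely $Z_i - \sqrt{\rho_n} X_i = \tfrac{\rho_n^{-1/2}}{n} \Delta^{-1} \sum_{\ell} \eta_{i\ell} X_\ell + R_i$, where $\Delta = \E X_1 X_1^T$, the centered entries $\eta_{i\ell} = A_{i\ell} - \rho_n X_i^T X_\ell$ are conditionally on $X$ independent and mean-zero, the leading term has norm $O(1/\sqrt{n})$, and $\max_i \| Z_i - \sqrt{\rho_n} X_i \| \le C \log n / \sqrt{n}$ with high probability. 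I would then expand $h(Z_c) - h(\sqrt{\rho_n} X_c)$ to second order about $\sqrt{\rho_n} X_c$. This is exactly where enlarging the domain in Assumption~\ref{assumu:smooth} to $\conv(\{0\} \cup \supp F)$ is used: the base points $\sqrt{\rho_n} X_c$ and the segments joining them to $Z_c$ all lie in a neighborhood of this convex hull, so the Hessian bound holds uniformly along them.

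The second-order remainder is the routine part: the uniform bound $\|\nabla^2 h\| \le C$ together with the max-norm error estimate gives a contribution of order $\log^2 n / n$, which the degree-$(r-2)$ homogeneity of $\nabla^2 h$ can sharpen to $\rho_n^{(r-2)/2} \log^2 n / n$ if desired. The first-order term is the crux. Degree-$r$ homogeneity makes $\nabla h$ homogeneous of degree $r-1$, so $\nabla h(\sqrt{\rho_n} X_c) = \rho_n^{(r-1)/2} \nabla h(X_c)$, and substituting the linearization turns the first-order term into $\rho_n^{r/2 - 1}$ times a doubly-indexed average of the independent mean-zero variables $\{\eta_{ab}\}$ with uniformly bounded coefficients. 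As in the dense case, a triangle-inequality bound on this average is hopeless---it would leave behind a factor that survives multiplication by $\sqrt{n}$---so the cancellation must instead be extracted through its conditional variance. Since each $\eta_{ab}$ appears with coefficient $O(n^{-2})$ and has conditional variance at most $C\rho_n$, this variance is $O(\rho_n / n^2)$, whence the first-order term is $O_P(\rho_n^{(r-1)/2}/n)$. I expect this step---establishing the linearization and then replacing a norm bound by a variance computation---to be the main obstacle, precisely as in Theorem~\ref{thm:ustat:consistent}.

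Collecting the pieces, $\sqrt{n}(\Uhat_n - U_n)/\rho_n^r$ is $O_P(\rho_n^{-(r+1)/2}/\sqrt{n})$ from the first-order term and $O(\rho_n^{-(r+2)/2} \log^2 n / \sqrt{n})$ from the second-order term, so both vanish as soon as $\rho_n$ decays slowly enough---the binding requirement coming from the second-order term---which is ensured by the standing sparsity assumption~\eqref{eq:assum:sparsity} together with the usual lower bound on the expected degree. Apart from this density condition, the only difference from the dense proof is the systematic tracking of the powers of $\rho_n$ produced by homogeneity and the verification that they combine with the $\rho_n^{-r}$ normalization to leave a quantity tending to zero.
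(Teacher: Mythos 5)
Your overall route is the paper's route: reduce the random normalization $\rhohat_n^r$ to $\rho_n^r$ by a law of large numbers plus Slutsky (the paper's one-line remark that the result ``follows from the strong law of large numbers'' refers to exactly this step), prove a sparse analogue of Lemma~\ref{lem:2toinfty} establishing convergence of $\ASE(A,d)$ to $\sqrt{\rho_n}X$ (the paper does this in Lemma~\ref{lem:sparse:2toinfty}), and then rerun the second-order Taylor argument of Theorem~\ref{thm:ustat:consistent} with homogeneity releasing the powers of $\rho_n$: the linear term handled through the cancellation in the centered entries $A_{ij}-\rho_n X_i^TX_j$ (the paper uses Hoeffding plus Borel--Cantelli, which gives almost-sure convergence where your conditional-variance bound gives only convergence in probability), and the quadratic term handled by the uniform Hessian bound on a neighborhood of $\conv(\{0\}\cup\supp F)$, which you correctly identify as the reason the theorem enlarges the domain in Assumption~\ref{assumu:smooth}. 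Your row-wise linearization is just the row-wise reading of the paper's matrix decomposition: your leading term is $(A-\rho_n P)\UP\SP^{-1/2}$ and your $R_i$ collects the Frobenius-bounded pieces.

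There is, however, a quantitative gap, and it sits exactly at what you call the engine. You assert $\max_i\|Z_i-\sqrt{\rho_n}X_i\|\le C\log n/\sqrt{n}$ with high probability, but the paper's own sparse lemma gives only $C\log n/\sqrt{\rho_n n}$, and the loss is not an artifact: your $O(1/\sqrt{n})$ heuristic is valid for the leading linear term, but the remainder $R_i$ is controlled (in the paper, and in any argument at this level of technology) by $\|A-\rho_n P\|^2\,\kappa(\rho_n P)/\lambda_d^{3/2}(\rho_n P)=O(\log n/\sqrt{\rho_n n})$, which dominates as soon as $\rho_n\to 0$; pushing the remainder down to $\log n/\sqrt{n}$ would require entrywise eigenvector analysis well beyond what the paper uses. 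With the correct bound, your quadratic contribution after normalization becomes $O\bigl(\rho_n^{-(r+1)}\log^2 n/\sqrt{n}\bigr)$ rather than $O\bigl(\rho_n^{-(r+2)/2}\log^2 n/\sqrt{n}\bigr)$. More seriously, your closing claim that both contributions vanish ``under the standing sparsity assumption together with the usual lower bound on the expected degree'' is false as stated: under $n\rho_n=\omega(\log n)$ alone, $\rho_n$ may be of order $\log^2 n/n$, and then every one of these bounds --- yours or the corrected ones --- diverges (e.g.\ your own first-order bound $\rho_n^{-(r+1)/2}/\sqrt{n}$ is then of order $n^{r/2}/\log^{r+1}n$). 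What is actually needed is a polynomial lower bound $\rho_n=\omega(n^{-c})$ with $c>0$ depending on $r$; the theorem statement omits this and the paper's two-line proof glosses over it, but your write-up affirmatively asserts the condition is automatic, which it is not.
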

\begin{proof}
The result follows from the strong law of large numbers, 
once one establishes convergence of $\ASE(A,d)$ to $\sqrt{\rho_n}X$.
Details are provided in Appendix~\ref{apx:sparse}.
\end{proof}
The condition in Equation~\eqref{eq:homogeneous} is satisfied by
the average degree, subgraph count and degree moment examples presented at the beginning of this section.
Whether or not the MMD obeys this
condition depends on the MMD kernel $\kappa$.

At first glance, one might hope to obtain next a distributional result
for $\Uhat_n$ about $\rho_n^r \theta = \E U_n$ by 
using the delta method to establish a distributional limit for
\begin{equation*}
\frac{ \sqrt{n}( U_n - \rho_n^r \theta) }{ \rhohat_n^r }
=
\sqrt{n} \frac{
	\sum_{1 \le i_1 < \dots < i_m \le n}
	(h(X_{i_1}, X_{i_2}, \dots, X_{i_m}) - \theta) }
	{ \rhohat_n^r/\rho_n^r }
\end{equation*}
and appealing to Slutsky's Theorem.
Unfortunately, the estimator $\rhohat_n$ complicates matters.
Applying the delta method would require that we control
the covariance term relating
$\rhohat_n^r/\rho_n^r$ and $\sum_c h(X_{i_1},\dots,X_{i_m})$,
which depends on the kernel $h$.
In the case of subgraph counts, this covariance can be controlled
so long as the subgraph $R$ has a particular structure.
Such structural assumptions on $R$ are required in the distributional results
presented by \cite{BhaBic2015} and \cite{GreSha2017}.

Focusing on subgraph counts, for a graph $R$ on $m$ vertices with edge set $E = E(R)$, we have the kernel
\begin{equation*}
h_R(x_1,x_2,\dots,x_m)
= \frac{1}{N(R)} \sum_{\tau \in S_m} 
        \prod_{\{i,j\} \in E} x_{\tau(i)}^T x_{\tau(j)}
        \prod_{\{i,j\} \not \in E} (1-x_{\tau(i)}^T x_{\tau(j)}),
\end{equation*}
where $N(R)$ is the number of graphs isomorphic to $R$.
Thus,
\begin{equation*}
h(\sqrt{\rho_n}x_1,\sqrt{\rho_n}x_2,\dots,\sqrt{\rho_n}x_m)
        = \rho_n^{|E|}(1-\rho_n)^{\binom{m}{2}-|E|} h(x_1,x_2,\dots,x_m).
\end{equation*}
Since $\rho_n \rightarrow 0$, we must rescale by $\rho_n^{-|E|}$, which  yields the {\em normalized} subgraph density
considered in~\cite{BicCheLev2011},
\begin{equation*}
 \Ptilde(R) = \rho_n^{-|E|} \E h_R(\sqrt{\rho_n} X_1, \sqrt{\rho_n} X_2, \dots,
				\sqrt{\rho_n} X_m )
\end{equation*}
Bootstrapping this quantity is
the focus of both \cite{BhaBic2015} and \cite{GreSha2017}.
Note that estimating $\Ptilde(R)$ requires estimating both
$\rho_n$ and the subgraph density
$\E h_R(\sqrt{\rho_n} X_1, \sqrt{\rho_n} X_2, \dots, \sqrt{\rho_n} X_m )$.
Letting $\rhohat_n = \binom{n}{2}^{-1} \sum_{i<j} A_{ij}$ again,
we have the following distributional result.

\begin{theorem} \label{thm:sparse:ustat:clt}
Under the sparse setting described above,
let $R$ be a graph on $m$ vertices, either acyclic or equal to
a cycle on $m$ vertices.
Provided that $n\rho_n  = \omega( \log n )$,
\begin{equation*}
\sqrt{n}\, \frac{ \Uhat_n - \rho_n^r \E h(X_1,X_2,\dots,X_m) }
		{ \rhohat^r }
\inlaw \calN(0, \sigma^2),
\end{equation*}
where $\sigma^2$ depends on $R$ and $F$.
The same result holds for the ASE plug-in bootstrap $\Uhat_n^*$.
\end{theorem}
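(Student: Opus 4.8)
The plan is to reduce the statement to a central limit theorem for the normalized count viewed as a U-statistic of the unobserved latent sample, and then to pass to the data-dependent normalization $\rhohat^{\,r}$. Write $V_n = \rho_n^{-r}U_n$ for the normalized count, so that by the assumed homogeneity $V_n$ is, up to asymptotically negligible corrections coming from the non-edge factors, a U-statistic in the unscaled positions $X_1,\dots,X_n$ with $\E V_n \to \theta = \E h(X_1,\dots,X_m)$. First I would discard the embedding error: Theorem~\ref{thm:sparse:ustat:consistent} gives $\sqrt{n}(\Uhat_n - U_n)/\rhohat^{\,r}\to 0$ under the sparse model, so by Slutsky it suffices to prove the claim with $\Uhat_n$ replaced by the true-position statistic $U_n$. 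This reduction is exactly where the sparsity hypothesis $n\rho_n = \omega(\log n)$ is spent, since it is the regime in which the adjacency spectral embedding recovers $\sqrt{\rho_n}X$ accurately enough for the plug-in error to vanish at scale $\sqrt{n}/\rho_n^{\,r}$.

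Next I would analyze the random normalization. Decomposing $\rhohat = \rho_n\bar{S}_n + \eta_n$, where $\bar{S}_n = \binom{n}{2}^{-1}\sum_{i<j}X_i^{T}X_j$ is the edge-density U-statistic in the unscaled positions and $\eta_n$ is the conditionally mean-zero edge noise, a direct variance bound gives $\eta_n = O_P(\sqrt{\rho_n}/n)$, whence $\sqrt{n}\,\eta_n/\rho_n = O_P(1/\sqrt{n\rho_n}) = o_P(1)$ precisely because $n\rho_n\to\infty$. Thus $\rhohat/\rho_n$ may be replaced by $\bar{S}_n$, which converges to $\E X_1^{T}X_2 = 1$. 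Since $V_n$ and $\bar{S}_n$ are U-statistics of the same i.i.d.\ sample, a joint Hoeffding decomposition yields a bivariate normal limit for $\sqrt{n}(V_n-\theta,\ \bar{S}_n - 1)$, whose entries are determined by the first-order (H\'{a}jek) projections of the count kernel and the edge kernel.

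With the bivariate limit in hand, I would write $\sqrt{n}(U_n - \rho_n^{\,r}\theta)/\rhohat^{\,r} = \sqrt{n}(V_n - \theta)/\bar{S}_n^{\,r} + o_P(1)$ and linearize $\bar{S}_n^{-r}$ about $\bar{S}_n = 1$, producing $\sqrt{n}(V_n-\theta) - r\theta^{-1}$-type corrections whose cross term with $\bar{S}_n - 1$ is the covariance term flagged in the discussion preceding the theorem; showing that this term does not corrupt the limit, that is, that it is asymptotically negligible and the result is a genuine $\calN(0,\sigma^2)$ with $\sigma^2$ depending on $R$ and $F$ through the count kernel's first projection, is the delicate point. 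The bootstrap claim then follows by running the same steps on $\Uhat_n^*$: the reduction of Theorem~\ref{thm:sparse:ustat:consistent} applies unchanged, and in the second step the ordinary U-statistic CLT is replaced by the distributional consistency of the weighted bootstrap invoked in Theorem~\ref{thm:Uboot} and \cite{BosCha2018}, so that $\Uhat_n^*$ inherits the same $\calN(0,\sigma^2)$ limit.

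The hard part will be the variance and covariance control underlying the bivariate limit, and this is exactly where the restriction that $R$ be acyclic or a single cycle enters. Once $\rho_n\to 0$, the variance of the normalized count need not be governed by the single-vertex projection: higher-order overlap terms, whose relative size is controlled by the densest subgraph of $R$, can contribute at the same or larger order, in which case neither the $\sqrt{n}$-rate Gaussian limit nor the clean interaction with $\bar{S}_n$ survives, and the covariance term relating $\rhohat^{\,r}/\rho_n^{\,r}$ to the count cannot be controlled. Acyclic graphs and single cycles are precisely the structures for which the single-vertex overlap dominates, salvaging both the limit and the covariance calculation; this is the same structural restriction required by \cite{BhaBic2015} and \cite{GreSha2017}, and verifying the requisite domination is the technical heart of the argument.
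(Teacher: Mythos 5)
Your proposal is correct and takes essentially the same route as the paper: the paper's own (very terse) proof likewise invokes Theorem~\ref{thm:sparse:ustat:consistent} to discard the embedding error and then applies a delta-method argument to the ratio of $U_n - \rho_n^r \E h(X_1,X_2,\dots,X_m)$ and $\rhohat_n/\rho_n$, deferring the bivariate-CLT computation to Theorem 1 of \cite{BicCheLev2011}. Your additional details --- the decomposition of $\rhohat$ into the edge-density U-statistic plus negligible edge noise, the joint Hoeffding decomposition, and the role of the acyclic/cycle restriction in controlling the covariance --- are precisely the steps the paper omits by citation.
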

\begin{proof}
The result follows by Theorem~\ref{thm:sparse:ustat:consistent}
followed by a delta method argument applied to the ratio of
$U_n - \rho_n^r \E h(X_1,X_2,\dots,X_m)$ and $\rhohat_n/\rho_n$.
The delta method argument is fairly similar to that
of Theorem 1 in \cite{BicCheLev2011} and is thus omitted.
\end{proof}

\section{Generating Network Bootstrap Samples}
\label{sec:netboot}

In this section, we turn to the more general case of bootstrapping  network quantities that cannot be expressed as U-statistics in the latent positions.  We start with two examples of such quantities.  

\begin{example}[Global Clustering Coefficient] \normalfont
\label{ex:globalclusteringcoef}
The global clustering coefficient
measures the total fraction of all ``open''
triangles that are closed, counting how many of all vertex triples $i,j,k$
for which $A_{ij} = A_{jk} = 1$ also have $A_{jk} = 1$.  Thus for a graph $G$ with adjacency matrix $A$, the global clustering coefficient is given by 
\begin{equation*}
C(G) = \frac{ 3 \sum_{1 \le i < j < k \le n} A_{ij} A_{jk} A_{ki}  }
        {\sum_{1 \le i < j < k \le n}
                (A_{ij}A_{jk} + A_{jk}A_{ki} + A_{ki}A_{ij} ) }.
\end{equation*}
Letting $L_3$ denote the linear chain on three vertices and $K_3$ denote the
triangle on $3$ vertices, we can write $C(G)$ as a ratio of subgraph
counts, 
\begin{equation*}
C(G) = \frac{ F_{K_3}(G) }{ F_{L_3}(G)} .
\end{equation*}
A quantity similar to this appeared in \cite{BhaBic2015}, who construct a confidence interval via the delta method and properties of subgraph counts.
\end{example}

\begin{example}[Average Shortest Path Length] \normalfont
\label{ex:ASPL}
For a graph $G$ on $n$ vertices with adjacency matrix $A$,
the shortest path distance
between vertices $i$ and $j$, which we denote $d_A(i,j)$,
is given by the length of the shortest path connecting vertices $i$
and $j$ in $G$. We take $d_A(i,j) = \infty$ if $i$ and $j$ are in
different connected components.
The average shortest path distance,
\begin{equation} \label{eq:def:spdist}
\dbar_A = \binom{n}{2}^{-1} \sum_{i < j} d_A(i,j)
\end{equation}
provides a natural measure of the extent to which the graph
exhibits small-world behavior.
While at first glance Equation~\eqref{eq:def:spdist} looks like a U-statistic,
this is not the case, since $d_A$ is a function that
itself depends on the data, rather than being a fixed kernel.
As a result, our U-statistic results above do not apply.
\end{example}

For quantities such as these that cannot be directly represented as a U-statistic, 
a natural approach to generating bootstrap samples would be to repeatedly generate
random adjacency matrices $\Ahat^*$ having similar distribution to $A$
and compute the network statistic on those replicates.
For this approach to work, we need, at a minimum, that the bootstrapped network $\Ahat^*$ be similar in distribution to $A$,
which in turn requires a notion of distance on networks.
A well-known example of such a distance is the cut metric
\citep{Lovasz2012}, which is especially well-suited to node-exchangeable
random graphs because it metrizes convergence of subgraph densities
\citep[][Theorem 11.3]{Lovasz2012}.
A resulting drawback of the cut metric is that it captures only the local information of small subgraphs, and fails to capture larger network structures.
A step toward a more global network distance is given by the
graph matching distance,
which measures the fraction of edges that differ
between two graphs, after their vertices have been aligned
so as to minimize the number of such edge discrepancies.

\begin{definition}[Graph matching distance] \label{def:dGM}
Let $G_1$, $G_2$ be two graphs each on $n$ vertices,
with adjacency matrices $A_1, A_2 \in \bbR^{n \times n}$.
The graph matching distance is defined as 
\begin{equation} \label{eq:def:dGM}
\dGM(A_1,A_2) 
= \min_{P \in \Pi_n} \binom{n}{2}^{-1} \frac{ \| A_1 - P A_2 P^T \|_1 }{ 2 },
\end{equation}
where $\Pi_n$ denotes the set of all $n$-by-$n$ permutation matrices.
\end{definition}
Two sequences of networks converge in this distance if they
are, asymptotically, isomorphic to one another
up to a vanishing fraction of their edges.
\begin{observation}
The graph matching distance $\dGM$ is a distance.
\end{observation}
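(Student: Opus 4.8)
The plan is to verify the standard metric axioms for $\dGM$, treating graphs up to isomorphism so that $\dGM$ separates points. Non-negativity is immediate, since $\dGM$ is a minimum of nonnegative quantities. The remaining axioms are identity of indiscernibles, symmetry, and the triangle inequality; the last of these is the only real work.

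For identity of indiscernibles, I would observe that $\dGM(A_1,A_2)=0$ holds if and only if there exists $P\in\Pi_n$ with $A_1 = PA_2P^T$, which is precisely the statement that the two graphs are isomorphic. Thus $\dGM$ vanishes exactly on isomorphic pairs and is strictly positive otherwise, as required for a distance on isomorphism classes. For symmetry, I would use that $\Pi_n$ is a group with $P^{-1}=P^T$: writing $\|A_1 - PA_2P^T\|_1 = \|P^T(A_1 - PA_2P^T)P\|_1 = \|P^TA_1P - A_2\|_1$, and noting that $P^T$ ranges over all of $\Pi_n$ as $P$ does, the two minimizations defining $\dGM(A_1,A_2)$ and $\dGM(A_2,A_1)$ coincide.

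The triangle inequality is the main step. Let $P_{12}$ and $P_{23}$ achieve the minima in $\dGM(A_1,A_2)$ and $\dGM(A_2,A_3)$ respectively, and take the candidate permutation $P_{13} = P_{12}P_{23}$, which is again in $\Pi_n$ because permutation matrices are closed under multiplication. The triangle inequality for $\|\cdot\|_1$ then gives
\begin{equation*}
\|A_1 - P_{13}A_3P_{13}^T\|_1 \le \|A_1 - P_{12}A_2P_{12}^T\|_1 + \|P_{12}A_2P_{12}^T - P_{12}P_{23}A_3P_{23}^TP_{12}^T\|_1.
\end{equation*}
The key observation is that conjugation by a permutation matrix merely relabels the entries of a matrix and therefore leaves $\|\cdot\|_1$ invariant; applying this with $P_{12}$ to the second term yields $\|P_{12}(A_2 - P_{23}A_3P_{23}^T)P_{12}^T\|_1 = \|A_2 - P_{23}A_3P_{23}^T\|_1$. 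Since $P_{13}$ is only one candidate in the minimization defining $\dGM(A_1,A_3)$, dividing through by the common normalizing factor $2\binom{n}{2}$ produces $\dGM(A_1,A_3)\le\dGM(A_1,A_2)+\dGM(A_2,A_3)$.

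I do not anticipate a serious obstacle: the only point needing care is the invariance of $\|\cdot\|_1$ under permutation conjugation, which is exactly what makes the composed permutation $P_{12}P_{23}$ a valid witness for the triangle inequality. If one wishes to regard $\dGM$ as a genuine metric rather than a pseudometric, it is worth stating explicitly at the outset that the domain is the set of graphs up to isomorphism, since $\dGM(A_1,A_2)=0$ does not force $A_1=A_2$ as matrices.
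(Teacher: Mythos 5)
Your proof is correct and follows essentially the same route as the paper's: both establish the triangle inequality by inserting the intermediate graph, applying the triangle inequality for $\|\cdot\|_1$, and exploiting invariance of $\|\cdot\|_1$ under permutation conjugation --- your composition of optimal witnesses $P_{12}P_{23}$ is just the concrete form of the paper's collapse of its double minimization over $P,Q \in \Pi_n$. The extra details you supply (explicit symmetry via $P \mapsto P^T$, and the remark that $\dGM$ separates points only on isomorphism classes) go beyond the paper's one-line treatment of those axioms but do not change the approach.
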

\begin{proof}
Symmetry and non-negativity follow from the definition.
The triangle inequality can be
verified by noting that, 
letting $H$ be the adjacency matrix of another $n$-vertex graph,
\begin{equation*} \begin{aligned}
\min_{P \in \Pi_n} \| A_1 - P A_2 P^T \|_1
&= \min_{P,Q \in \Pi_n} \| A_1 - Q H Q^T + Q H Q^T - P A_2 P^T \|_1 \\
&\le \min_{Q \in \Pi_n} \| A_1 - Q H Q^T \|_1
	+ \min_{P,Q \in \Pi_n} \| Q H Q^T - P A_2 P^T \|_1
\end{aligned} \end{equation*}
and observing that minimization over $P,Q \in \Pi_n$ is equivalent to
minimizing $\| H - P A_2 P^T \|_1$ with respect to $P \in \Pi_n$.
\end{proof}
We note that this distance has appeared in the literature under
a number of different names.
See \cite{BenIoa2018} and citations therein.

The optimization in Equation~\eqref{eq:def:dGM} is a quadratic assignment
problem \citep{BurAmiMar2009}, and thus computationally hard 
in general.   However, the graph matching problem has been studied extensively
\citep[see][and citations therein]{ConFogSanVen2004,Lyzinski2018},
and fast approximate solvers exist \citep{FAQAP}, though we will not
need them here.
The graph matching distance is,
up to constant factors depending on choice of normalization,
an upper bound on the cut metric,
which is also based on a computationally hard optimization problem.
This upper bound is immediate from the fact that the 
cut norm of a matrix $M$ is upper bounded by $\|M\|_1/n^2$
\citep[see][Chapter 8]{Lovasz2012}.

With this network distance in hand, we define a Wasserstein
distance between graphs analogously to the well-known
Wasserstein distance between Euclidean random variables.

\begin{definition}
Let $p \ge 1$ and let $A_1$ and $A_2$ be the adjacency matrices of
two random graphs both on $n$ vertices,
and let $\Gamma(A_1, A_2)$ denote the set of all couplings of $A_1$ and $A_2$.
The Wasserstein $p$-distance between $A_1$ and $A_2$ is given by
\begin{equation*}
\WGM{p}^p(A_1,A_2) = \inf_{\nu \in \Gamma(A_1,A_2)}
	\int \dGM^p(A_1,A_2) d \nu.
\end{equation*}
\end{definition}

The following lemma shows that, essentially,  the Wasserstein distance between two
random dot product graphs is bounded by the Wasserstein distance between their
respective latent position distributions (up to an orthogonal transformation). Since the latter Wasserstein distance must account for the orthogonal rotation,  
we define the Wasserstein $p$-distance between two $d$-dimensional inner product distributions $F_1$, $F_2$, 
for all $p \ge 1$, as 
\begin{equation} \label{eq:def:dcirc}
\dcirc_p(F_1,F_2) = \min_{Q \in \bbO_d} d_p(F_1,F_2 \circ Q).
\end{equation}
The lemma below will be the main technical tool required to show that the
bootstrapped $\Ahat^*$ described above converges to $A$ in
the graph matching Wasserstein distance.
We note that, after this paper was written, we found that a similar result was shown by \cite{Lei2018graphs}, in a different context and 
for the weaker cut metric instead of the graph matching distance.
A proof of the lemma can be found in Appendix~\ref{apx:wasser}.

\begin{lemma} \label{lem:GMLPwasser}
Let $F_1, F_2$ be $d$-dimensional inner product distributions
with $A_1 \sim \RDPG(F_1,n)$ and $A_2 \sim \RDPG(F_2,n)$. Then
\begin{equation*}
\WGM{p}^p(A_1,A_2) \le 2 \dcirc_1 (F_1,F_2)
\end{equation*}
\end{lemma}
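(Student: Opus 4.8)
The plan is to exhibit a single explicit coupling of $A_1$ and $A_2$ whose expected graph matching distance I can control, exploiting two structural features: the rotation invariance of the RDPG, and the fact that $\dGM$ takes values in $[0,1]$, so that $\dGM^p \le \dGM$ for every $p \ge 1$. This last point is what collapses the Wasserstein $p$-distance onto the $1$-Wasserstein distance $\dcirc_1$ of the latent distributions, regardless of $p$.

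First I would use the non-identifiability remark to replace $F_2$ by $F_2 \circ Q$ for the rotation $Q \in \bbO_d$ minimizing $\dcirc_1(F_1,F_2)$: since $\RDPG(F_2,n) \disteq \RDPG(F_2 \circ Q, n)$, the value $\WGM{p}(A_1,A_2)$ is unchanged if we take $A_2 \sim \RDPG(F_2 \circ Q, n)$. Let $\gamma$ be an optimal coupling achieving $d_1(F_1, F_2 \circ Q) = \dcirc_1(F_1,F_2)$, and draw $(X_i,Y_i) \iid \gamma$ for $i \in [n]$, so that $X_1,\dots,X_n$ and $Y_1,\dots,Y_n$ are the latent positions generating $A_1$ and $A_2$ respectively. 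Conditioned on these positions, I would couple the edges monotonically: for each $i<j$ draw $W_{ij} \iid \unif[0,1]$ and set $(A_1)_{ij} = \indic\{W_{ij} \le X_i^T X_j\}$ and $(A_2)_{ij} = \indic\{W_{ij} \le Y_i^T Y_j\}$. This is a valid coupling $\nu$ of $A_1$ and $A_2$ under which the edge $\{i,j\}$ disagrees with probability exactly $|X_i^T X_j - Y_i^T Y_j|$.

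Next I would bound $\dGM$ along this coupling by plugging in the identity permutation (legitimate since $\dGM$ minimizes over $\Pi_n$), giving $\dGM(A_1,A_2) \le \binom{n}{2}^{-1} \sum_{i<j} \indic\{(A_1)_{ij} \neq (A_2)_{ij}\}$. Using $\dGM^p \le \dGM$ and taking expectations under $\nu$,
\begin{equation*}
\WGM{p}^p(A_1,A_2) \le \E_\nu \dGM(A_1,A_2) \le \binom{n}{2}^{-1} \sum_{i<j} \E |X_i^T X_j - Y_i^T Y_j| = \E |X_1^T X_2 - Y_1^T Y_2|,
\end{equation*}
where the last equality uses that the pairs are i.i.d. It then remains to bound this single expectation: adding and subtracting $X_1^T Y_2$ and applying the triangle inequality and Cauchy--Schwarz gives $|X_1^T X_2 - Y_1^T Y_2| \le \|X_1\|\,\|X_2 - Y_2\| + \|Y_2\|\,\|X_1 - Y_1\|$, and since inner product distributions are supported in the unit ball and rotation preserves norms, $\|X_1\| \le 1$ and $\|Y_2\| \le 1$. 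Taking expectations, using independence of the two draws from $\gamma$ and then optimality of $\gamma$, yields $\E|X_1^T X_2 - Y_1^T Y_2| \le 2 d_1(F_1, F_2 \circ Q) = 2 \dcirc_1(F_1,F_2)$, as claimed.

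I expect the only real subtlety to be bookkeeping in the coupling construction—confirming that the monotone edge coupling composed with the latent coupling $\gamma$ genuinely has the correct marginals $\RDPG(F_1,n)$ and $\RDPG(F_2 \circ Q, n)$, and that the identity-permutation bound on $\dGM$ is exactly what makes this vertex-aligned coupling usable. The conceptual crux, by contrast, is the elementary observation $\dGM^p \le \dGM$, which is what permits a bound in terms of $\dcirc_1$ uniformly in $p$; everything else reduces to the triangle inequality and the unit-norm property of inner product distributions.
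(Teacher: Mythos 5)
Your proposal is correct and follows essentially the same route as the paper's proof: rotation invariance to reduce $\dcirc_1$ to $d_1$, an i.i.d.\ coupling of the latent positions via a (near-)optimal coupling, a Bernoulli edge coupling with disagreement probability $|X_i^T X_j - Y_i^T Y_j|$, the identity-permutation bound on $\dGM$, and the triangle inequality with the unit-ball property of inner product distributions to obtain the factor $2$. The only cosmetic differences are that the paper uses an $\epsilon$-near-optimal coupling rather than assuming an optimal one is attained, and it reduces the $p$-th power via Jensen's inequality plus binarity of the entries rather than your (equally valid) observation that $\dGM \in [0,1]$ forces $\dGM^p \le \dGM$.
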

Recall the procedure for generating RDPG bootstrap replicates,
outlined in Section~\ref{subsec:intro:netboot}.
Given $A$, we obtain estimates of the latent positions
$\Xhat_1,\Xhat_2,\dots,\Xhat_n$ via the ASE applied to $A$.
Letting $\Fhat_n$ be the empirical distribution of the estimates,
we draw $(\Ahat^*,\Xhat^*) \sim \RDPG(\Fhat_n,n)$.
The convergence rate of the ASE ensures that $\Fhat_n$ approximates $F_n$
well (up to orthogonal transformation),
and the fact that the empirical distribution approximates the population
distribution ensures that $F_n$ is close to $F$.
Lemma~\ref{lem:GMLPwasser} thus suggests that
$\Ahat^*$ will be distributionally similar to $A$.
The following theorem makes this precise.
A proof can be found in Appendix~\ref{apx:wasser}.

\begin{theorem} \label{thm:RDPGboot:wasser}
Let $F$ be a $d$-dimensional inner product distribution with $(A,X) \sim \RDPG(F,n)$.
Letting $\Fhat_n$ denote the empirical distribution of the ASE estimates $\Xhat$,
generate $\Ahat^* \sim \RDPG(\Fhat_n,n)$.
Then, if $(H,Z) \sim \RDPG(F,n)$ is an independent copy of $(A,X)$,
\begin{equation*}
\WGM{p}^p(\Ahat^*,H) = O\left( (n^{-1/2} + n^{-1/d})\log n \right) \, .
\end{equation*}
\end{theorem}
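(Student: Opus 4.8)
The plan is to push the graph-level bound down to the latent positions using Lemma~\ref{lem:GMLPwasser}, and then split the resulting latent-space distance into an estimation error, governed by the adjacency spectral embedding (Lemma~\ref{lem:2toinfty}), and a sampling error, governed by the rate of Wasserstein convergence of empirical measures. Concretely, since $\Ahat^* \sim \RDPG(\Fhat_n,n)$ with $\Fhat_n$ a random distribution depending on $A$, while $H \sim \RDPG(F,n)$ is independent of $A$, I would first condition on the observed network $A$ (equivalently, on $\Fhat_n$). For each realization, Lemma~\ref{lem:GMLPwasser} applied to the pair $(\Fhat_n, F)$ gives the pointwise bound $\WGM{p}^p(\Ahat^*,H) \le 2\,\dcirc_1(\Fhat_n,F)$, reducing everything to controlling the random quantity $\dcirc_1(\Fhat_n,F)$ in expectation (and, by Markov, with high probability), which is how I read the $O(\cdot)$ statement.

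Next I would record that $\dcirc_1$ satisfies a triangle inequality: because $d_1$ is invariant under a common orthogonal rotation of both arguments, composing the minimizing rotations shows $\dcirc_1(F_1,F_3) \le \dcirc_1(F_1,F_2) + \dcirc_1(F_2,F_3)$. Inserting the empirical distribution $F_n = n^{-1}\sum_{i=1}^n \delta_{X_i}$ of the \emph{true} latent positions as an intermediate point yields
\[
\dcirc_1(\Fhat_n,F) \le \dcirc_1(\Fhat_n,F_n) + \dcirc_1(F_n,F).
\]
For the estimation term, I would use the index-matching coupling pairing $\Xhat_i$ with $X_i$. Lemma~\ref{lem:2toinfty} supplies an orthogonal $Q$ with $\max_i \|Q^T\Xhat_i - X_i\| \le C\log n/\sqrt{n}$ on an event of probability at least $1 - Cn^{-2}$; on that event, orthogonal invariance of the Euclidean norm gives $\dcirc_1(\Fhat_n,F_n) \le n^{-1}\sum_i \|Q^T\Xhat_i - X_i\| \le C\log n/\sqrt{n}$, and on the complementary event $\dGM \le 1$ (together with boundedness of $\supp F$) makes the contribution $O(n^{-2})$, hence negligible in expectation. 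For the sampling term, since $d_1 \ge \dcirc_1$ and $F$ has compact support, I would invoke the standard rate for Wasserstein-$1$ convergence of empirical measures in $\bbR^d$ (of Fournier--Guillin type), $\E\, d_1(F_n,F) = O(n^{-1/d})$ for $d \ge 3$, with the boundary case $d=2$ contributing the extra logarithmic factor and $d=1$ giving $n^{-1/2}$. Combining the two terms, the uniform $\log n$ factor absorbs the boundary cases, yielding $\E\,\WGM{p}^p(\Ahat^*,H) = O\!\big((n^{-1/2}+n^{-1/d})\log n\big)$.

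The main obstacle is the sampling term rather than the spectral estimation error: the curse of dimensionality in Wasserstein convergence of empirical measures forces $d_1(F_n,F)$ to decay only at the slow $n^{-1/d}$ rate for $d \ge 3$, and it is this term, not the $n^{-1/2}\log n$ error inherited from the ASE, that dominates and sets the final rate. The remaining work is bookkeeping: verifying that the rotation produced by Lemma~\ref{lem:2toinfty} is exactly the kind of orthogonal transformation quotiented out in the definition of $\dcirc_1$, handling the randomness of $\Fhat_n$ by conditioning and isolating the rare event on which the embedding bound fails, and checking that the estimated positions yield a valid inner product distribution (or controlling the negligible discrepancy when they do not) so that Lemma~\ref{lem:GMLPwasser} applies to $(\Fhat_n,F)$.
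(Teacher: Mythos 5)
Your proposal is correct, but it organizes the argument differently from the paper. The paper performs the decomposition in the \emph{graph} Wasserstein space: it introduces an intermediate bootstrap graph $A^*\sim\RDPG(F_n,n)$ built from the empirical distribution $F_n$ of the \emph{true} latent positions, bounds $\WGM{p}^p(A^*,H)\le 2\,d_1(F_n,F)$ by Lemma~\ref{lem:GMLPwasser} together with empirical-measure rates (it cites Lei's bounds where you cite Fournier--Guillin; both give the stated rate), and then bounds $\WGM{p}(A^*,\Ahat^*)$ by constructing a second, explicit coupling in which $A^*$ and $\Ahat^*$ share the same resampling indices $\xi_1,\dots,\xi_n$ and edges are coupled Bernoulli-by-Bernoulli, so that $\Pr[A^*_{ij}\neq\Ahat^*_{ij}\mid A,X,\xi]\le|X_{\xi_i}^TX_{\xi_j}-\Xhat_{\xi_i}^T\Xhat_{\xi_j}|$, with Lemma~\ref{lem:2toinfty} and the trivial bound $\dGM\le 1$ on the bad event finishing that term. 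You instead apply Lemma~\ref{lem:GMLPwasser} only once, conditionally on $A$, to the pair $(\Fhat_n,F)$, and do the decomposition in the \emph{latent} space: $\dcirc_1(\Fhat_n,F)\le\dcirc_1(\Fhat_n,F_n)+\dcirc_1(F_n,F)$, the first term handled by the index-matching coupling plus Lemma~\ref{lem:2toinfty}, the second by $\dcirc_1\le d_1$ and empirical-measure rates. Your route is more modular: the paper's explicit second coupling is replaced by two routine facts you correctly identify, namely that $\dcirc_1$ satisfies the triangle inequality (your composition-of-rotations argument works, since $d_1$ is invariant under a common rotation) and that conditional couplings average into a bound on the unconditional distance, i.e.\ $\WGM{p}^p(\Ahat^*,H)\le 2\,\E[\dcirc_1(\Fhat_n,F)]$, with the failure event of Lemma~\ref{lem:2toinfty} absorbed via $\dGM\le 1$ exactly as you describe. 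What the paper's route buys is that it never needs metric properties of $\dcirc_1$ and keeps the bootstrap-resampling randomness explicit in the coupling; what yours buys is a single use of the graph-level lemma and a shorter second step. Both routes share the same caveat, which you flag and the paper leaves implicit: $\Fhat_n$ need not be a genuine inner product distribution, and the statement $\Ahat^*\sim\RDPG(\Fhat_n,n)$ already presumes this is resolved (e.g.\ by clipping), so you are no worse off than the paper there.
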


Since the graph matching distance is an upper bound on the cut metric,
which in turn metrizes convergence of subgraph densities, 
Theorem~\ref{thm:RDPGboot:wasser} implies that
the subgraph densities of $\Ahat^*$ converge almost surely to the same
limit as those of $H$.
We would also like that these counts
have the same distributional limits after appropriate rescaling, but 
unfortunately, this distributional limit is somewhat more delicate.
While Theorem~\ref{thm:RDPGboot:wasser} ensures
that our bootstrapped network replicate $\Ahat^*$ converges in the
Wasserstein metric
to the distribution of the original observation $A$,
this is not in itself sufficient to ensure that some network
statistic of interest, say $t(\Ahat^*)$,
converges to the same distribution as $t(H)$.
Provided $\E t(H)$ is finite,
it is sufficient that the network statistic in question be
continuous with respect to our network Wasserstein metric,
in the sense that
$\WGM{1}(\Ahat^*,H) \rightarrow 0$ implies
that (by slight abuse of notation), $d_1(t(\Ahat^*), t(H)) \rightarrow 0$.
Proving this continuity even for (suitably rescaled) subgraph counts
does not appear feasible using the techniques underlying
Theorem~\ref{thm:RDPGboot:wasser},
as the coupling argument used in the proof
fails in the presence of the $\sqrt{n}$ scaling needed to
ensure a non-degenerate limit for subgraph densities
\citep[see, e.g.,][Theorem 1]{BicCheLev2011}.
Such distributional results are even less clear for more complicated
network statistics. We thus leave this line of inquiry for future work.

\section{Experiments} \label{sec:expts}

In this section, we briefly demonstrate empirical performance of the methods introduced in
Sections~\ref{sec:ustat} and~\ref{sec:netboot} on two specific network statistic examples.
We begin with an application of the U-statistic bootstrap to the problem of estimating the triangle density.

\subsection{U-statistic bootstrap: triangle counts}
\label{subsec:expt:uboot}

As an application of the U-statistic-based bootstrapping method 
introduced in Section~\ref{sec:ustat},
we consider the problem of obtaining a confidence interval for the
triangle subgraph density $P(K_3)$ in a random dot product graph.
We note that this differs slightly from 
\cite{BhaBic2015} and \cite{GreSha2017}, who 
focus on the normalized density, $\Ptilde(K_3)$.
As shown in \cite{BicCheLev2011,BhaBic2015}
and discussed briefly
at the end of Section~\ref{sec:ustat},
the limiting distribution for $\Ptilde(K_3)$
can be obtained via the delta method.
The variance of this limit distribution is expressible in terms of
other subgraph densities, and thus can itself
be estimated via our U-statistic-based bootstrapping scheme,
but this is notationally cumbersome.
Thus, since the aim of this experiment is merely illustrative,
we bootstrap the simpler $P(K_3)$, while stressing that
obtaining a confidence interval for $\Ptilde(K_3)$ instead 
is straightforward, and refer the interested reader to Section 3 of \cite{BhaBic2015} for details.  

For the experiments on synthetic data, we generate networks from the RDPG with latent position distribution $\opBeta(2,3)$, as  $(A, X) \sim \RDPG( \opBeta(2,3), n )$ for various choices of $n$, 
and construct the latent position estimate $\Xhat = \ASE(A,1)$.
While the diagonal entries of $A$ are negligible for the purposes of
asymptotics, the zeros on the diagonal of $A$ tend to introduce instability to the ASE for finite $n$.
Several methods for correcting this have been suggested
\citep{SchTuc2010,MarPriCop2011,TanKetBadCalMarVogPriSus2019}.
Here we follow the simple approach of taking the $i$-th diagonal entry of $A$
to be the normalized degree of the $i$-th vertex,
i.e., $A_{ii} = n^{-1} \sum_{j\neq i} A_{ij}$.

Having obtained an estimate $\Xhat = \ASE(A,1)$, we generate
$100$ bootstrap samples of the triangle density from $\Xhat$,
as described in Section~\ref{sec:ustat}.
Based on this bootstrap sample, we produce $95\%$ confidence intervals
for the triangle density using three different methods.
The first is the percentile bootstrap, i.e.,
based on the empirical distribution of the bootstrap sample itself.
The other two confidence intervals are based on a normal approximation,
with variance estimated based on the bootstrap sample,
i.e., the standard bootstrap.
We consider confidence intervals of this form centered at the mean of the bootstrap sample and at the triangle density of the observed network $A$.
We include these three different confidence interval constructions to
assess the presence of bias in the bootstrap sample and to investigate
how well the bootstrap distribution approximates the true distribution
of the triangle density.

\begin{figure}[ht]
  \centering
  \subfloat[]{\includegraphics[width=0.5\columnwidth]{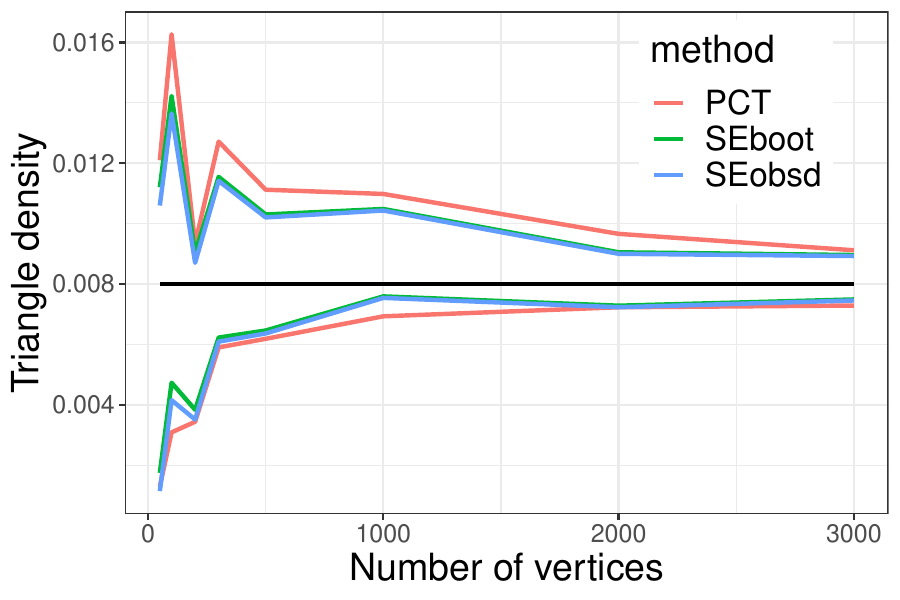}}
~~~
  \subfloat[]{\includegraphics[width=0.5\columnwidth]{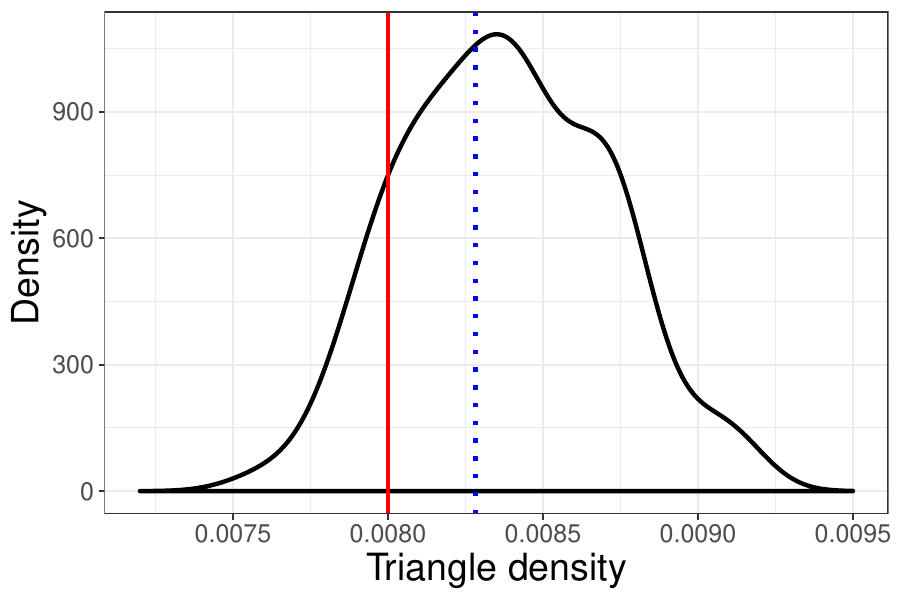}}
  \vspace{-3mm}
  \caption{(a) Confidence intervals produced by a single run of the triangle density experiment for varying values of $n$. The plot shows the confidence intervals produced by the percentile bootstrap (PCT, red), the standard bootstrap centered at the mean of the bootstrap samples (SEboot, green) and the standard bootstrap centered at the triangle density of the observed network (SEobsd, blue). (b) Smoothed density plot of the bootstrap samples produced in a particular experiment trial. The plot shows the same samples as were used to compute the $n=500$ confidence interval in subplot (a). The red solid line indicates the true expected triangle density. The blue dotted line indicates the triangle density of the network from which the bootstrap iterates were produced.}
    \label{fig:triden:singlerun}
\end{figure}

Figure~\ref{fig:triden:singlerun}
shows the bootstrap samples generated by a single run of this experiment
and the resulting confidence intervals.
Unsurprisingly, the confidence intervals produced by the percentile bootstrap
are slightly wider than those produced based on the normal approximation.  
In a smaller set of experiments, we found that this gap shrank,
but did not disappear entirely, when
the number of bootstrap samples was increased by an order of magnitude.
The trends in Figure~\ref{fig:triden:singlerun} are borne out
in Figure~\ref{fig:triden:summary},
which summarizes the performance of these confidence intervals,
aggregated over 200 independent realizations 
for each value of the number of vertices $n$.
Figure~\ref{fig:triden:summary}(a) confirms that the percentile bootstrap
confidence intervals are wider, on average,
than the standard bootstrap intervals for this problem.
Note that by construction, the two variants on the standard bootstrap have
the same length, and thus their lines in the plot overlap.
Figure~\ref{fig:triden:summary}(b) shows coverage rates for the three bootstrap
variants, and confirms that 
both of the standard bootstrap variants attain
approximately 95\% coverage,
while the percentile bootstrap is somewhat conservative.

\begin{figure}[ht]
  \centering
  \subfloat[]{\includegraphics[width=0.5\columnwidth]{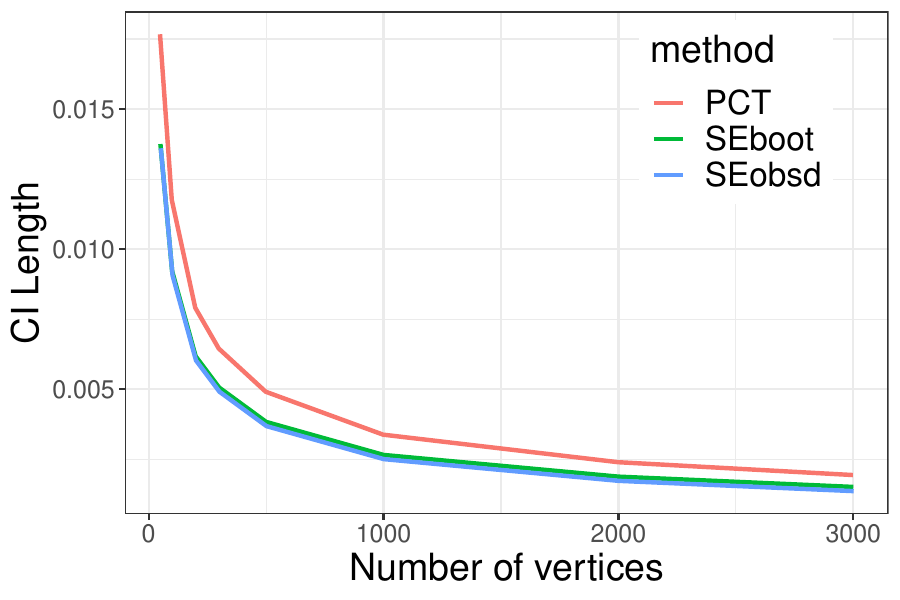}}
~~~
  \subfloat[]{\includegraphics[width=0.5\columnwidth]{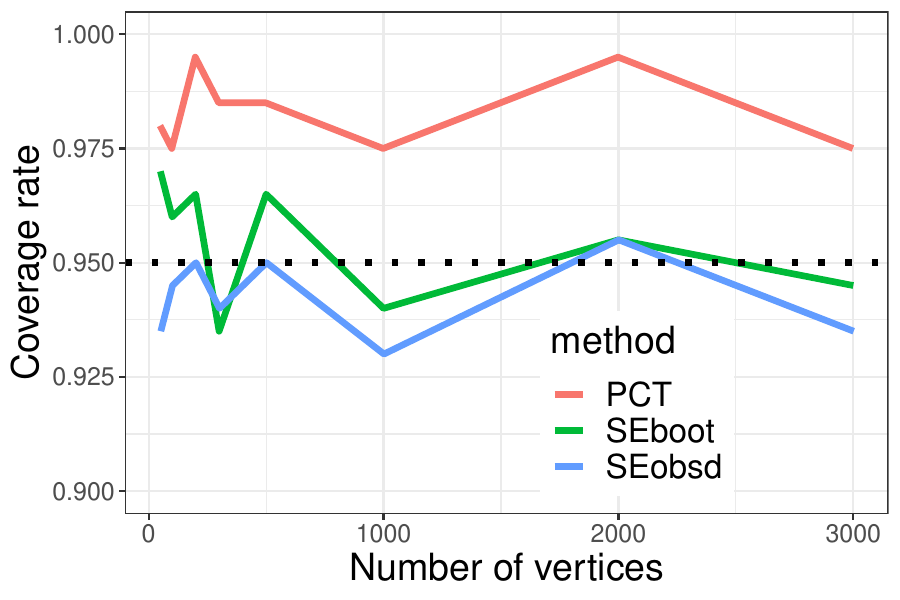}}
  \vspace{-3mm}
  \caption{Average performance of the percentile bootstrap (PCT, red), the standard bootstrap centered at the mean of the bootstrap samples (SEboot, green) and at the triangle density of the observed network (SEobsd, blue), aggregated over 200 trials, as a function of the number of vertices $n$. (a) Average confidence interval lengths.  By construction, the two standard bootstrap variants have the same length. (b) Average coverage rates. }
    \label{fig:triden:summary}
\end{figure}

The computational cost of the subgraph count bootstrap methods of \cite{BhaBic2015,GreSha2017} precluded a thorough comparison on this problem,
but a series of small-scale experiments suggest that both are
broadly competitive with our method, with comparable coverage rates and lengths to the ones produced by the normal approximations, but at a much higher computational cost.   We leave a more thorough comparison of the practical effectiveness of these methods for future work.

\subsection{Bootstrapping Average Shortest Path Length}
\label{subsec:expt:netboot}
We illustrate the full-network resampling scheme
discussed in Section~\ref{sec:netboot} on the problem of estimating the expected average shortest path length in a graph,
\begin{equation*}
\E[ \dbar_A \mid \dbar_A < \infty].
\end{equation*}
Note that we are conditioning on the event that the graph is connected to avoid the trivial situation where $\E[ \dbar_A ]$ is infinite for finite $n$.

The most natural approach to generating bootstrap samples of this quantity
is to generate bootstrap replicates of whole networks and evaluate the average shortest path length of each replicate.
The method introduced in Section~\ref{sec:netboot} is well-suited to this.

For comparison, we consider two other methods for generating network
bootstrap samples.
The first is an adaptation of the empirical graphon method 
introduced by \cite{GreSha2017}.
In that paper, the authors considered producing bootstrap samples for subgraph
counts by, in essence, resampling vertices.
That is, one produces a bootstrap replicate adjacency matrix
$A^*$ by drawing $Z_1,Z_2,\dots,Z_n$ i.i.d.\ from the uniform distribution
on $[n]$, and take $A^*_{ij} = A_{Z_i,Z_j}$.
As acknowledged by the authors, a major drawback of this scheme
is that since the diagonal elements of $A$ are equal to $0$,
resampled vertices $k,\ell$ with $Z_k=Z_\ell$ are precluded
from forming an edge.
For the purposes of estimating subgraph counts, this
drawback can be avoided, as demonstrated by the results in
\cite{GreSha2017}, but we will see that this causes a bias
when generating whole networks.
We expect that correcting for this deficiency
is possible, but it is not trivial and we do not pursue the question here.
We also include, for the sake of comparison, a parametric bootstrap procedure,
which performs estimation over a much smaller space of models
compared to the RDPG-based resampling scheme and the empirical graphon,
and can thus serve as a gold standard when its underlying model is true.

In this set of experiments,
we again generate $A$ from a random dot product graph with latent position
distribution given by $\opBeta(2,3)$. 
We discard and regenerate $A$ in the event that the resulting graph
is not connected, since we are interested in the average shortest path
in $A$ conditional on that average being finite.
Given $A$, we construct the estimate $\Xhat = \ASE(A,1)$,
replacing the zeros on the diagonal of $A$ with the scaled degrees
$n^{-1} \sum_j A_{ij}$ as before.
Letting $\Fhat_n$ denote the empirical distribution of $\Xhat$,
we then draw $B=100$ bootstrap replicates of
$(\Ahat^*,\Xhat^*) \sim \RDPG( \Fhat_n, n )$,
computing the average shortest path of each iterate
(and resampling in the event that a sample $\Ahat^*$ is not connected).
We note that for finite $n$, the ASE may produce an estimate $\Xhat$
such that some entries of $\Xhat \Xhat^T$ are outside the interval $[0,1]$.
When this occurs, we threshold the resulting
entries of the expected adjacency matrix so that
$\E[ \Ahat_{ij} \mid \Xhat ] = 0 \vee (1 \wedge \Xhat_i^T \Xhat_j)$.
For the empirical graphon procedure described above,
we resample from the same original observed network $A$.
For the parametric bootstrap, we fit the parameters of a beta distribution
to the entries of $\Xhat \in \bbR^n$ based on a method of moments estimate,
replacing $\Xhat$ with $-\Xhat$ in the event that the majority of the
entries of $\Xhat$ are negative, and discarding elements of $\Xhat$
that do not fall in $[0,1]$ after this adjustment.
Letting $(\ahat,\bhat)$ denote the estimated parameters of the
Beta distribution, we draw the $n \times n$ adjacency matrix $\Ahat^*$ from
$\RDPG( \opBeta(\ahat, \bhat), n)$.
In all three bootstrap procedures, we generate $100$ bootstrap samples,
discarding and regenerating samples as needed to ensure connected networks,
and use these $100$ bootstrap samples to obtain an estimate of the variance
of the average shortest path. We then use this variance to construct
a $95\%$ confidence interval centered on the observed
value of $\dbar_A$ via a normal approximation.

\begin{figure}[ht]
  \centering
  \subfloat[]{\includegraphics[width=0.5\columnwidth]{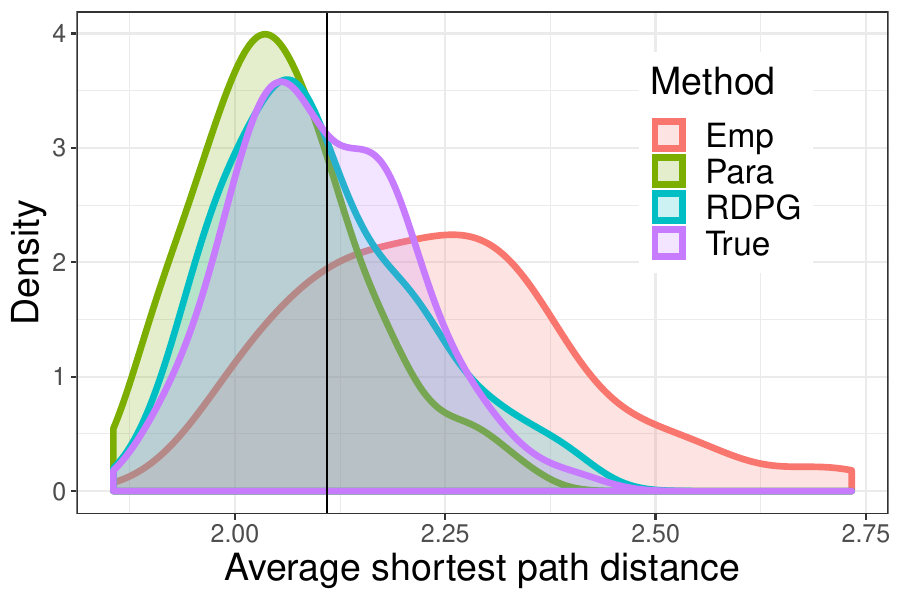}}
~~~
  \subfloat[]{\includegraphics[width=0.5\columnwidth]{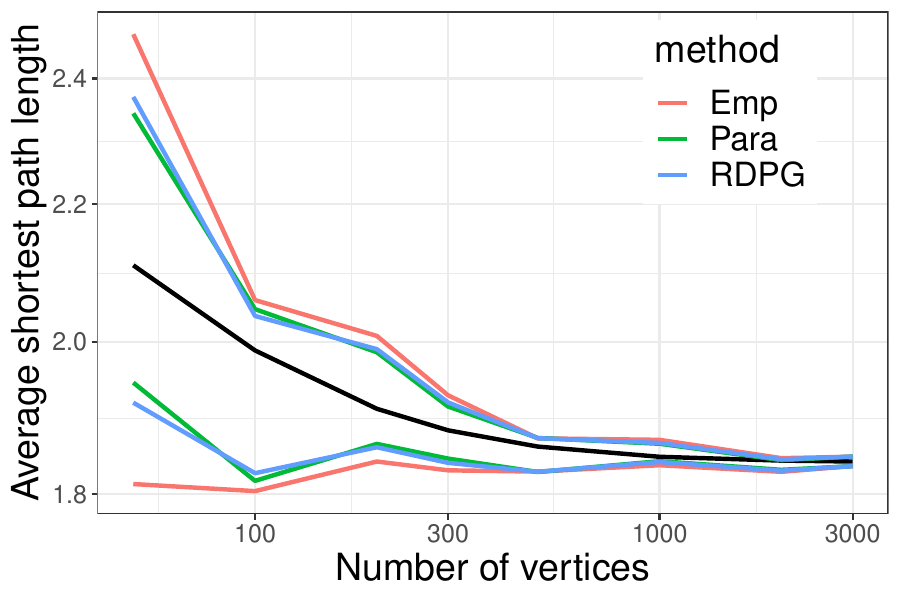}}
  \vspace{-3mm}
  \caption{(a) Bootstrap distributions produced by the different resampling schemes in a single trial with $n=50$.
The plot shows smoothed density plots of $100$ samples of the average shortest path distance generated by the empirical graphon (Emp, red),
RDPG resampling (RDPG, blue), and the parametric bootstrap (Para, green).
For comparison, a sample of $100$ draws from the true model is also shown (True, purple).  
The black vertical line shows the estimated mean of the true distribution, based on 10,000 Monte Carlo samples. 
(b) Confidence intervals produced based on the variances of the three different bootstrap samples for different values of $n$.
}
\label{fig:SPRDPG:singlerun}
\end{figure}

For each of $n=50,100,200,500,1000$, we ran $200$ independent trials
of the experiment just described.
For illustration, results from a single run of this experiment
are shown in Figure~\ref{fig:SPRDPG:singlerun}.
Figure~\ref{fig:SPRDPG:singlerun}(a) shows a smoothed density plot of the
$100$ bootstrap samples generated by the RDPG resampling scheme,
the empirical graphon and the parametric network bootstrap
for a single trial of the $n=50$ case.
In addition, the plot shows the histogram of $100$ independent draws from the
true distribution of the average shortest path under the generating model.
The black vertical line indicates the mean of this distribution,
estimated from 10,000 Monte Carlo samples, generated independently of the experimental trials.
It is clear from the plot that none of the three bootstrap methods captures
the true sampling distribution perfectly,
but the parametric and RDPG bootstraps both yield much better approximations
than the empirical graphon, which displays a significant positive bias.
Figure~\ref{fig:SPRDPG:singlerun}(b) shows representative confidence intervals
produced by the three sampling methods for each choice of $n$,
with the $n=50$ condition corresponding to the same trial as in (a).
We see that the RDPG and parametric bootstraps produce very similar
confidence intervals, while the intervals produced by the empirical graphon
are noticeably wider.

\begin{figure}[ht]
  \centering
  \subfloat[]{\includegraphics[width=0.5\columnwidth]{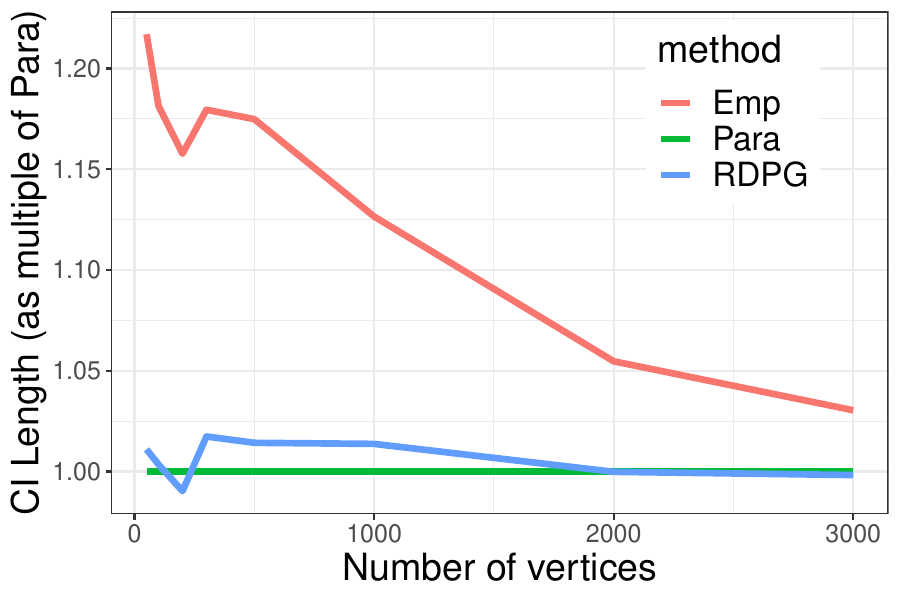}}
~~~
  \subfloat[]{\includegraphics[width=0.5\columnwidth]{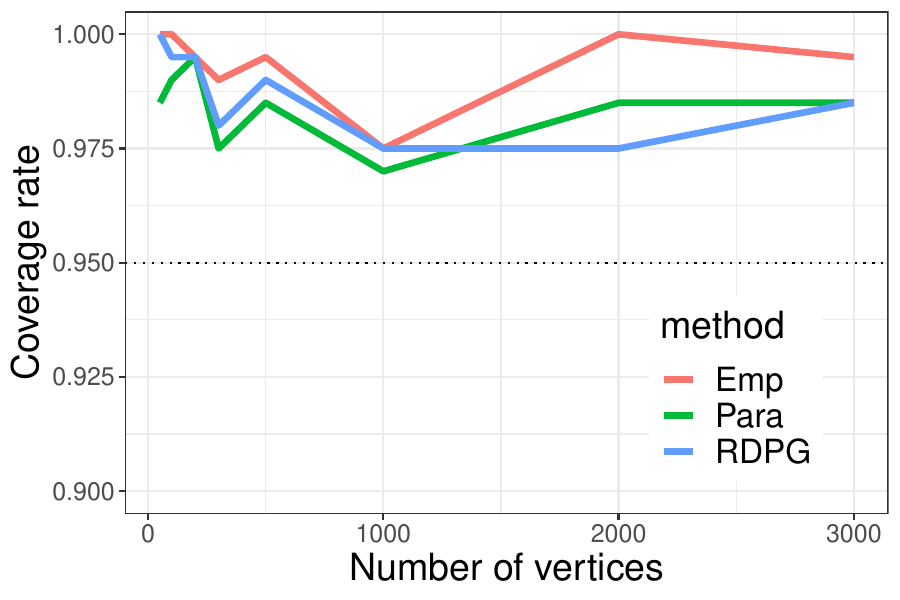}}
  \vspace{-3mm}
  \caption{(a) Average CI length as a multiple of the parametric bootstrap confidence interval length, over $200$ trials, for each value of $n$,
	produced by the empirical graphon (Emp, red),
	parametric bootstrap (Par, green) and	
	RDPG resampler (RDPG, blue).
	(b) Average coverage rates over the $200$ trials
	of the same three methods.}
  \label{fig:SPRDPG:summary}
\end{figure}

This is further borne out in Figure~\ref{fig:SPRDPG:summary}(a),
which shows the average confidence interval length over the $200$ trials,
for each of the three methods.
The plot shows the length of the confidence interval as a multiple of
the length of the interval produced by the parametric bootstrap.
We see that the RDPG bootstrap closely matches the length of the
parametric bootstrap, while the empirical graphon yields much wider intervals.
Figure~\ref{fig:SPRDPG:summary}(b) shows average coverage
rates of the three methods also aggregated over $200$ trials
for each choice of $n$.
We note that since the true expected average shortest path distance
is estimated via Monte Carlo, we computed coverage rates under
the milder requirement that a confidence interval was considered to have
covered the target if it overlapped the interval comprising
two standard errors of the mean of the Monte Carlo samples.
This adjustment did not appreciably change the coverage rates of the
three bootstrap methods.
We see from the plot that all three methods are overly conservative, but
the RDPG and parametric bootstraps somewhat less so, and they appear to improve more than the empirical graphon as $n$ increases.  
Given that the parametric bootstrap represents a sort of gold standard by fitting the true model, greatly narrowing the space of possible latent position distributions compared to the empirical and RDPG bootstraps,
it is quite encouraging that the RDPG bootstrap tracks the
performance of the parametric bootstrap so closely.

\section{Discussion and Conclusion} \label{sec:conclusion}
We have presented two methods for bootstrapping network data, applicable to any latent space model but studied in this paper under the random dot product graph. For network quantities expressible as U-statistics in the latent positions, our results in Section~\ref{sec:ustat} show that plugging in estimates of the true latent positions and proceeding with existing bootstrap techniques for U-statistics yields a distributionally consistent resampling procedure. Experimental evidence in Section~\ref{subsec:expt:uboot} supports this 
claim. 
By design, our resampling scheme is able to take advantage of existing
computational speedups for bootstrapping U-statistics, and thus provides a substantial computational improvement over existing approaches
to bootstrapping subgraph counts, which require expensive combinatorial enumeration.  

We have also proposed a method to resample whole networks by first estimating the latent positions and then drawing bootstrap samples from the empirical distribution of these estimates, followed by generating the network itself. We have shown that, again under the random dot product graph model, networks produced in this way are asymptotically
distributionally equivalent to the observed network from which
they are built. This distributional equivalence required defining the graph matching distance, which may be of
independent interest, as it provides a more intuitive notion of
graph distance than the more popular cut metric.

Directions for future work are many.   As alluded to in the paper, the core ideas presented here can be applied more broadly than the random dot product graph.   Our results in this paper can be extended trivially to the generalized random dot product graph \cite{RubPriTanCap2017} and graph root distributions \cite{Lei2018graphs}, but the basic ideas should work for any latent space model, as long as the latent positions can be accurately estimated. 
We leave an exploration of the precise analogues of our results for other latent space models to future work, along with investigating the extent to which the smoothness conditions required by the results of Section~\ref{sec:ustat} might be relaxed.

Our results in Section~\ref{sec:netboot} suggest several interesting
lines of inquiry. Firstly, experiments in Section~\ref{subsec:expt:netboot} confirm that our resampling
procedure improves over prior techniques for generating whole
network samples, but fails to obtain the desired coverage rate.  
Developing a correction for this is of great interest.
A small-scale experiment suggests that $\operatorname{BC}_a$
\citep{EfrTib1994}, a particularly popular bootstrap correction
technique, alleviates this issue, at least to an extent.
Unfortunately, the
large number of bootstrap samples required for this correction
is rather prohibitive in the network context,
and it would be preferable to develop a correction that explicitly takes
network structure into account.
More broadly, the ability to generate bootstrap replicates of networks
leads one to ask about the possibility of establishing network
analogues of classical bootstrap techniques such as the
$m$-out-of-$n$ bootstrap.

Finally, as discussed at the end of Section~\ref{sec:netboot},
convergence under the Wasserstein network distance
does not necessarily imply
convergence of other network statistics such as
$\sqrt{n}$-scaled subgraph densities.
It is possible that a stronger notion of distance will be required to ensure such convergences, one that eschews the fairly local perspective of the cut metric and graph matching distance in favor of more global measures of graph similarity.
A distance between networks that
considers path lengths or $k$-hop neighborhoods of individual vertices might better capture the global properties of networks that are
necessary to ensure that, for example, two networks have similar average path lengths.
\cite{ChaKubSch1998,BenIoa2018,TorSuaEli2018} present possible starting points for this line of work.
On the other hand, designing custom graph distances for every network statistic of interest is not ideal either, and we expect that future work in this direction will have to balance generality against improving rates for specific network statistics.

\bibliographystyle{plainnat}
\bibliography{biblio}

\begin{thebibliography}{57}
\providecommand{\natexlab}[1]{#1}
\providecommand{\url}[1]{\texttt{#1}}
\expandafter\ifx\csname urlstyle\endcsname\relax
  \providecommand{\doi}[1]{doi: #1}\else
  \providecommand{\doi}{doi: \begingroup \urlstyle{rm}\Url}\fi

\bibitem[Abbe(2018)]{Abbe2018}
E.~Abbe.
\newblock Community detection and stochastic block models.
\newblock \emph{Foundations and Trends in Communications and Information
  Theory}, 14\penalty0 (1-2):\penalty0 1--162, 2018.

\bibitem[Arcones and Gin\'e(1992)]{ArcGin1992}
M.~A. Arcones and E.~Gin\'e.
\newblock On the bootstrap of {U} and {V} statistics.
\newblock \emph{The Annals of Statistics}, 20\penalty0 (2):\penalty0 655--674,
  1992.

\bibitem[Athreya et~al.(2018)Athreya, Fishkind, Levin, Lyzinski, Park, Qin,
  Sussman, Tang, Vogelstein, and Priebe]{AthFisLevLyzParQinSusTanVogPri2018}
A.~Athreya, D.~E. Fishkind, K.~Levin, V.~Lyzinski, Y.~Park, Y.~Qin, D.~L.
  Sussman, M.~Tang, J.~T. Vogelstein, and C.~E. Priebe.
\newblock Statistical inference on random dot product graphs: a survey.
\newblock \emph{Journal of Machine Learning Research}, 18\penalty0
  (226):\penalty0 1--92, 2018.

\bibitem[Bento and Ioannidis(2018)]{BenIoa2018}
J.~Bento and S.~Ioannidis.
\newblock A family of tractable graph distances.
\newblock In \emph{Proceedings of {SIAM} International Conference on Data
  Mining}, 2018.

\bibitem[Bhattacharyya and Bickel(2015)]{BhaBic2015}
S.~Bhattacharyya and P.~J. Bickel.
\newblock Subsampling bootstrap of count features of networks.
\newblock \emph{The Annals of Statistics}, 43:\penalty0 2384--2411, 2015.

\bibitem[Bickel and Sarkar(2015)]{BicSar2015}
P.~Bickel and P.~Sarkar.
\newblock Hypothesis testing for automated community detection in networks.
\newblock \emph{Journal of the Royal Statistical Society Series {B}},
  78\penalty0 (1):\penalty0 253--273, 2015.

\bibitem[Bickel et~al.(2013)Bickel, Choi, Chang, and Zhang]{BicChoChaZha2013}
P.~Bickel, D.~Choi, X.~Chang, and H.~Zhang.
\newblock Asymptotic normality of maximum likelihood and its variational
  approximation for stochastic blockmodels.
\newblock \emph{The Annals of Statistics}, 41\penalty0 (4):\penalty0
  1922--1943, 2013.

\bibitem[Bickel and Freedman(1981)]{BicFre1981}
P.~J. Bickel and D.~A. Freedman.
\newblock Some asymptotic theory for the bootstrap.
\newblock \emph{The Annals of Statistics}, 9\penalty0 (6):\penalty0 1196--1217,
  1981.

\bibitem[Bickel et~al.(2011)Bickel, Chen, and Levina]{BicCheLev2011}
P.~J. Bickel, A.~Chen, and E.~Levina.
\newblock The method of moments and degree distributions for network models.
\newblock \emph{The Annals of Statistics}, 39:\penalty0 38--59, 2011.

\bibitem[Blom(1976)]{Blom1976}
G.~Blom.
\newblock Some properties of incomplete $u$-statistics.
\newblock \emph{Biometrika}, 63\penalty0 (3):\penalty0 573--580, 1976.

\bibitem[Bose and Chatterjee(2018)]{BosCha2018}
A.~Bose and S.~Chatterjee.
\newblock \emph{U-statistics, $M_m$-Estimators and Resampling}.
\newblock Springer, 2018.

\bibitem[Burkard et~al.(2009)Burkard, Dell'Amico, and Martello]{BurAmiMar2009}
R.~Burkard, M.~Dell'Amico, and S.~Martello.
\newblock \emph{Assignment Problems}.
\newblock SIAM, 2009.

\bibitem[Chang et~al.(2018)Chang, Kolaczyk, and Yao]{ChaKolYao2018}
J.~Chang, E.~D. Kolaczyk, and Q.~Yao.
\newblock Estimation of subgraph densities in noisy networks.
\newblock \emph{arXiv:1803.02488}, 2018.

\bibitem[Chartrand et~al.(1998)Chartrand, Kubicki, and Schultz]{ChaKubSch1998}
G.~Chartrand, G.~Kubicki, and M.~Schultz.
\newblock Graph similarity and distance in graphs.
\newblock \emph{Aequationes mathematicae}, 55\penalty0 (1--2):\penalty0
  129--145, 1998.

\bibitem[Chen and Kato(2019)]{CheKat2019}
X.~Chen and K.~Kato.
\newblock Randomized incomplete $u$-statistics in high dimensions.
\newblock \emph{The Annals of Statistics}, 47\penalty0 (6):\penalty0
  3127--3156, 2019.

\bibitem[Conte et~al.(2004)Conte, Foggia, Sansone, and Vento]{ConFogSanVen2004}
D.~Conte, P.~Foggia, C.~Sansone, and M.~Vento.
\newblock Thirty years of graph matching in pattern recognition.
\newblock \emph{International Journal of Pattern Recognition and Artificial
  Intelligence}, 18\penalty0 (3):\penalty0 265--s98, 2004.

\bibitem[Efron and Tibshirani(1994)]{EfrTib1994}
B.~Efron and R.~J. Tibshirani.
\newblock \emph{An Introduction to the Bootstrap}.
\newblock Chapman and Hall/CRC, 1994.

\bibitem[Fosdick and Hoff(2015)]{FosHof2015}
B.~K. Fosdick and P.~D. Hoff.
\newblock Testing and modeling dependencies between a network and nodal
  attributes.
\newblock \emph{Journal of the American Statistical Association}, 110\penalty0
  (511):\penalty0 1047--1056, 2015.

\bibitem[Green and Shalizi(2017)]{GreSha2017}
A.~Green and C.~R. Shalizi.
\newblock Bootstrapping exchangeable random graphs.
\newblock \emph{arXiv:1711.00813}, 2017.

\bibitem[Gretton et~al.(2012)Gretton, Borgwardt, Rasch, Sch\"olkopf, and
  Smola]{GreBorRasSchSmo2012}
A.~Gretton, K.~M. Borgwardt, M.~J. Rasch, B.~Sch\"olkopf, and A.~Smola.
\newblock A kernel two-sample test.
\newblock \emph{Journal of Machine Learning}, 13:\penalty0 723--773, 2012.

\bibitem[Hoeffding(1948)]{Hoeffding1948}
W.~Hoeffding.
\newblock A class of statistics with asymptotically normal distributions.
\newblock \emph{The Annals of Statistics}, 19:\penalty0 293--325, 1948.

\bibitem[Hoff et~al.(2002)Hoff, Raftery, and Handcock]{HofRafHan2002}
P.~D. Hoff, A.~E. Raftery, and M.~S. Handcock.
\newblock {Latent space approaches to social network analysis}.
\newblock \emph{Journal of the American Statistical Association}, 97\penalty0
  (460):\penalty0 1090--1098, 2002.

\bibitem[Hu\v{s}kov\'{a} and Janssen(1993)]{HusJan1993}
M.~Hu\v{s}kov\'{a} and P.~Janssen.
\newblock Consistency of the generalized bootstrap for degenerate
  {U}-statistics.
\newblock \emph{The Annals of Statistics}, 21\penalty0 (4):\penalty0
  1811--1823, 1993.

\bibitem[Lahiri(2003)]{Lahiri2003}
S.~N. Lahiri.
\newblock \emph{Resampling Methods for Dependent Data}.
\newblock Springer, 2003.

\bibitem[Lee et~al.(2017)Lee, Shen, Priebe, and Vogelstein]{LeeShePriVog2017}
Y.~Lee, C.~Shen, C.~E. Priebe, and J.~T. Vogelstein.
\newblock Network dependence testing via diffusion maps and distance-based
  correlations.
\newblock \emph{arXiv:1703.10136}, 2017.

\bibitem[Lei(2016)]{Lei2016}
J.~Lei.
\newblock A goodness-of-fit test for stochastic block models.
\newblock \emph{The Annals of Statistics}, 44\penalty0 (1):\penalty0 401--424,
  2016.

\bibitem[Lei(2018{\natexlab{a}})]{Lei2018emp}
J.~Lei.
\newblock Convergence and concentration of empirical measures under
  {Wasserstein} distance in unbounded functional spaces.
\newblock \emph{arXiv:1802.09684}, 2018{\natexlab{a}}.

\bibitem[Lei(2018{\natexlab{b}})]{Lei2018graphs}
J.~Lei.
\newblock Network representation using graph root distributions.
\newblock \emph{arXiv:1802.09684}, 2018{\natexlab{b}}.

\bibitem[Levin et~al.(2017)Levin, Athreya, Tang, Lyzinski, Park, and
  Priebe]{LevAthTanLyzYouPri2017}
K.~Levin, A.~Athreya, M.~Tang, V.~Lyzinski, Y.~Park, and C.~E. Priebe.
\newblock A central limit theorem for an omnibus embedding of random dot
  product graphs.
\newblock \emph{arXiv:1705.09355v5}, 2017.

\bibitem[Levin et~al.(2019)Levin, Lodhia, and Levina]{LevLodLev2019}
K.~Levin, A.~Lodhia, and E.~Levina.
\newblock Recovering low-rank structure from multiple networks with unknown
  edge distributions.
\newblock \emph{arXiv:1906.07265}, 2019.

\bibitem[Li et~al.(2016)Li, Levina, and Zhu]{LiLevZhu2016}
T.~Li, E.~Levina, and J.~Zhu.
\newblock Network cross-validation by edge sampling.
\newblock \emph{arXiv:1612.04717}, 2016.

\bibitem[Lov\'asz(2012)]{Lovasz2012}
L.~Lov\'asz.
\newblock \emph{Large Networks and Graph Limits}.
\newblock American Mathematical Society, 2012.

\bibitem[Lu and Peng(2013)]{LuPen2013}
L.~Lu and X.~Peng.
\newblock Spectra of edge-independent random graphs.
\newblock \emph{Electronic Journal of Combinatorics}, 20, 2013.

\bibitem[Lunde and Sarkar(2019)]{LunSar2019}
R.~Lunde and P.~Sarkar.
\newblock Subsampling sparse graphons under minimal assumptions.
\newblock \emph{arXiv:1907.12528}, 2019.

\bibitem[Lyzinski(2018)]{Lyzinski2018}
V.~Lyzinski.
\newblock Information recovery in shuffled graphs via graph matching.
\newblock \emph{{IEEE} Transactions on Information Theory}, 64\penalty0
  (5):\penalty0 3254--3273, 2018.

\bibitem[Lyzinski et~al.(2014)Lyzinski, Sussman, Tang, Athreya, and
  Priebe]{LyzSusTanAthPri2014}
V.~Lyzinski, D.~L. Sussman, M.~Tang, A.~Athreya, and C.~E. Priebe.
\newblock Perfect clustering for stochastic blockmodel graphs via adjacency
  spectral embedding.
\newblock \emph{Electronic Journal of Statistics}, 8:\penalty0 2905--2922,
  2014.

\bibitem[Lyzinski et~al.(2017)Lyzinski, Tang, Athreya, Park, and
  Priebe]{lyzinski15_HSBM}
V.~Lyzinski, M.~Tang, A.~Athreya, Y.~Park, and C.~E. Priebe.
\newblock Community detection and classification in hierarchical stochastic
  blockmodels.
\newblock \emph{IEEE Transactions on Network Science and Engineering}, 2017.

\bibitem[Marchette et~al.(2011)Marchette, Priebe, and
  Coppersmith]{MarPriCop2011}
D.~Marchette, C.~E. Priebe, and G.~Coppersmith.
\newblock Vertex nomination via attributed random dot product graphs.
\newblock In \emph{Proceedings of the 57th ISI World Statistics Congress},
  2011.

\bibitem[Maugis et~al.(2017)Maugis, Priebe, Olhede, and
  Wolfe]{MauOlhPriWol2017}
P-A.~G. Maugis, C.~E. Priebe, S.~C. Olhede, and P.~J. Wolfe.
\newblock Statistical inference for network samples using subgraph counts.
\newblock \emph{arXiv:1701.00505}, 2017.

\bibitem[Newman(2010)]{Newman2010}
M.~E.~J. Newman.
\newblock \emph{Networks}.
\newblock Oxford University Press, 2010.

\bibitem[Oliveira(2009)]{Oliveira2009}
R.~I. Oliveira.
\newblock Concentration of the adjacency matrix and of the {L}aplacian in
  random graphs with independent edges.
\newblock \emph{arXiv:0911.0600}, 2009.

\bibitem[Randi\'{c}(1975)]{Randic1975}
M.~Randi\'{c}.
\newblock Characterization of molecular branching.
\newblock \emph{Journal of the American Chemical Society}, 97\penalty0
  (23):\penalty0 6609--6615, 1975.

\bibitem[Rubin-Delanchy et~al.(2017)Rubin-Delanchy, Priebe, Tang, and
  Cape]{RubPriTanCap2017}
P.~Rubin-Delanchy, C.~E. Priebe, M.~Tang, and J.~Cape.
\newblock A statistical interpretation of spectral embedding: the generalised
  random dot product graph.
\newblock \emph{arXiv 1709.05506}, 2017.

\bibitem[Scheinerman and Tucker(2010)]{SchTuc2010}
E.~R. Scheinerman and K.~Tucker.
\newblock Modeling graphs using dot productrepresentations.
\newblock \emph{Computational Statistics}, 25\penalty0 (1):\penalty0 1--16,
  2010.

\bibitem[Serfling(1980)]{Serfling1980}
R.~J. Serfling.
\newblock \emph{Approximation Theorems of Mathematical Statistics}.
\newblock Wiley, 1980.

\bibitem[Shalizi and Asta(2017)]{ShaAst2017}
C.~R. Shalizi and D.~Asta.
\newblock Consistency of maximum likelihood for continuous-space network
  models.
\newblock \emph{arXiv:171102123}, 2017.

\bibitem[Sussman et~al.(2012)Sussman, Tang, Fishkind, and
  Priebe]{SusTanFisPri2012}
D.~L. Sussman, M.~Tang, D.~E. Fishkind, and C.~E. Priebe.
\newblock A consistent adjacency spectral embedding for stochastic blockmodel
  graphs.
\newblock \emph{Journal of the American Statistical Association}, 107:\penalty0
  1119--1128, 2012.

\bibitem[Sz\'{e}kely and Rizzo(2013)]{SzeRiz2013}
G.~J. Sz\'{e}kely and M.~L. Rizzo.
\newblock Energy statistics: a class of statistics based on distances.
\newblock \emph{Journal of Statistical Planning and Inference}, 143\penalty0
  (8):\penalty0 1249--`172, 2013.

\bibitem[Tang et~al.(2017{\natexlab{a}})Tang, Athreya, Sussman, Lyzinski, Park,
  and Priebe]{TanAthSusLyzParPri2017}
M.~Tang, A.~Athreya, D.~L. Sussman, V.~Lyzinski, Y.~Park, and C.~E. Priebe.
\newblock A semiparametric two-sample hypothesis testing problem for random
  graphs.
\newblock \emph{Journal of Computational and Graphical Statistics}, 26\penalty0
  (2):\penalty0 344--354, 2017{\natexlab{a}}.

\bibitem[Tang et~al.(2017{\natexlab{b}})Tang, Athreya, Sussman, Lyzinski, and
  Priebe]{TanAthSusLyzPri2017}
M.~Tang, A.~Athreya, D.~L. Sussman, V.~Lyzinski, and C.~E. Priebe.
\newblock A nonparametric two-sample hypothesis testing problem for random
  graphs.
\newblock \emph{Bernoulli}, 23\penalty0 (3):\penalty0 1599--1630,
  2017{\natexlab{b}}.

\bibitem[Tang et~al.(2019)Tang, Ketcha, Badea, Calabrese, Margulies,
  Vogelstein, Priebe, and Sussman]{TanKetBadCalMarVogPriSus2019}
R.~Tang, M.~Ketcha, A.~Badea, E.~D. Calabrese, D.~S. Margulies, J.~T.
  Vogelstein, C.~E. Priebe, and D.~L. Sussman.
\newblock Connectome smoothing via low-rank approximations.
\newblock \emph{{IEEE} Transactions on Medical Imaging}, 38\penalty0
  (6):\penalty0 1446--1456, 2019.

\bibitem[Torres et~al.(2018)Torres, Su\'{a}rez-Serrato, and
  Eliassi-Rad]{TorSuaEli2018}
L.~Torres, P.~Su\'{a}rez-Serrato, and T.~Eliassi-Rad.
\newblock Graph distance from the topological view of non-backtracking cycles.
\newblock \emph{arXiv:1807.09592}, 2018.

\bibitem[Vogelstein et~al.(2015)Vogelstein, Conroy, Lyzinski, Podrazik,
  Kratzer, Harley, Fishkind, Vogelstein, and Priebe]{FAQAP}
J.~T. Vogelstein, J.~M. Conroy, V.~Lyzinski, L.~J. Podrazik, S.~G. Kratzer,
  E.~T. Harley, D.~E. Fishkind, R.~J. Vogelstein, and C.~E. Priebe.
\newblock Fast approximate quadratic programming for graph matching.
\newblock \emph{{PLOS} One}, 10\penalty0 (4), 2015.

\bibitem[Wang et~al.(2018)Wang, Yu, and Rinaldo]{WanYuRin2018}
D.~Wang, Y.~Yu, and A.~Rinaldo.
\newblock Optimal change point detection and localization in sparse dynamic
  networks.
\newblock \emph{arXiv:1809.09602}, 2018.

\bibitem[Wernicke(2006)]{Wernicke2006}
S.~Wernicke.
\newblock Efficient detection of network motifs.
\newblock \emph{{IEEE}/{ACM} Transactions on Computational Biology and
  Bioinformatics}, 3:\penalty0 347--359, 2006.

\bibitem[Young and Scheinerman(2007)]{YouSch2007}
S.~Young and E.~Scheinerman.
\newblock Random dot product graph models for social networks.
\newblock In \emph{Proceedings of the 5th International Conference on
  Algorithms and models for the web-graph}, pages 138--149, 2007.

\bibitem[Yu et~al.(2015)Yu, Wang, and Samworth]{YuWanSam2015}
Y.~Yu, T.~Wang, and R.~J. Samworth.
\newblock A useful variant of the {Davis}-{Kahan} theorem for statisticians.
\newblock \emph{Biometrika}, 102:\penalty0 315--323, 2015.

\end{thebibliography}

\newpage
\onecolumn
\appendix
\section*{Appendix} \label{sec:apx}

Here we provide supplemental proofs and technical details.
We note that in handling the competing goals of notational precision
and conformity with the existing literature,
we have opted for the latter, and as a result,
a few symbols are overloaded.
In particular, in the appendices that follow, $P$ will be used to denote either the subgraph density introduced in Section~\ref{sec:ustat} or the $n$-by-$n$ expectation of the adjacency matrix $A$ conditional on the latent positions.
Which of these two uses is intended will be clear from the context.
Similarly, the symbol $U$ is overloaded, denoting a U-statistic in some contexts and denoting an $n$-by-$d$ matrix with orthonormal columns in others.
Again, which of these two is intended will be clear from the context and from the fact that we subscript by $n$ (i.e., $U_n, \Uhat_n$, etc.) in the case of U-statistics, and leave plain $U, \Uhat$ to denote the matrices.

\section{Technical Results} \label{apx:gen}
We begin by collecting a handful of technical results from the existing
literature on random dot product graphs that will be useful in
the proofs below.

\begin{lemma}
\label{lem:rdpgfacts}
Let $(A,X) \sim \RDPG(F,n)$ for some $d$-dimensional inner product
distribution $F$.
Define $P = X X^T \in \bbR^{n \times n}$ so that
$\E[ A \mid X] = P$ and let
$\UP \SP \UP^T$ be the rank-$d$ eigendecomposition of $P$,
so that $\SP \in \bbR^{d \times d}$ is a diagonal matrix with entries
given by the eigenvalues
$\lambda_1(P) \ge \lambda_2(P) \ge \dots \ge \lambda_d(P) > 0$
and $\UP \in \bbR^{n \times d}$ has as its columns the $d$ corresponding
unit eigenvectors.
Similarly, let $\UA \SA \UA^T = \Xhat \Xhat^T \in \bbR^{n \times n}$
be the rank-$d$ approximation of
$A$ given by its top $d$ largest-magnitude eigenvalues and eigenvectors.
That is, let $\SA \in \bbR^{d \times d}$ be the diagonal matrix
with entries given by the $d$ largest-magnitude eigenvalues of $A$
and let $\UA \in \bbR^{n \times d}$
have as its columns the $d$ corresponding unit eigenvectors.
There exist constants $C_2 \ge C_1 > 0$ such that
with probability at least $1-Cn^{-2}$,
\begin{equation} \label{eq:specgrowth:1}
C_1 n \le \lambda_d(P) \le \dots \le \lambda_1(P) \le C_2 n
\end{equation}
\begin{equation} \label{eq:specgrowth:2}
\text{ and } \| A - P \| \le C \sqrt{n \log n}.
\end{equation}
Further, letting $Q \in \bbR^{d \times d}$
be the orthogonal matrix guaranteed by Lemma~\ref{lem:2toinfty},
for all suitably large $n$ it holds with probability at least
$1-Cn^{-2}$ that
\begin{equation} \label{eq:innerprod} 
\| Q - \UA^T \UP \|_F \le Cn^{-1} \log n, 
\end{equation}
\begin{equation} \label{eq:Qswap} 
\| Q \SA^{-1/2} - \SP^{-1/2} Q \|_F \le C n^{-3/2} \log^{1/2} n,
\end{equation}
\begin{equation} \label{eq:DKmodified} 
\| \UA Q - \UP \|_F \le \frac{ C \log^{1/2} n }{ \sqrt{n} }.
\end{equation}
\end{lemma}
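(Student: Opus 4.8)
The plan is to handle the five displays in two groups: the ``base'' estimates \eqref{eq:specgrowth:1}--\eqref{eq:specgrowth:2}, which concern the spectrum of $P$ and the size of the noise $A-P$, and the three ``alignment'' estimates \eqref{eq:innerprod}--\eqref{eq:DKmodified}, which compare the leading eigenvectors and eigenvalues of $A$ and $P$ through the orthogonal matrix $Q$ of Lemma~\ref{lem:2toinfty}. First I would dispose of \eqref{eq:specgrowth:1}. Since $P=XX^T$ has the same nonzero eigenvalues as the $d\times d$ Gram matrix $X^TX=\sum_{i=1}^n X_iX_i^T$, it suffices to control $n^{-1}X^TX$, whose summands are i.i.d., bounded (as $\|X_i\|\le 1$ for an inner product distribution), and have mean $\Delta=\E X_1X_1^T$ of full rank by assumption. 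A matrix concentration inequality gives $\|n^{-1}X^TX-\Delta\|\le C\sqrt{\log n/n}$ with probability $1-Cn^{-2}$, and Weyl's inequality then sandwiches each $\lambda_k(P)=\lambda_k(X^TX)$ between $C_1n$ and $C_2n$ for all large $n$. Estimate \eqref{eq:specgrowth:2} is the standard spectral-norm concentration for the centered adjacency matrix: conditional on $X$, the entries $\{A_{ij}-P_{ij}\}_{i<j}$ are independent, bounded, and mean zero, so a matrix Bernstein--type bound yields $\|A-P\|\le C\sqrt{n\log n}$ on an event of probability $1-Cn^{-2}$. Both of these are essentially off the shelf, and I would cite the relevant RDPG references rather than reprove them.

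For the alignment estimates I would establish \eqref{eq:DKmodified} first. This is a Davis--Kahan bound: the gap separating the top $d$ eigenvalues of $P$ from the rest is $\lambda_d(P)-\lambda_{d+1}(P)=\lambda_d(P)\ge C_1n$ (using $\lambda_{d+1}(P)=0$ and \eqref{eq:specgrowth:1}), while the perturbation is $\|A-P\|\le C\sqrt{n\log n}$ by \eqref{eq:specgrowth:2}, so $\|\sin\Theta(\UA,\UP)\|_F\le C\|A-P\|/\lambda_d(P)\le C\sqrt{\log n/n}$. Taking $Q$ to be the orthogonal Procrustes solution aligning $\UA$ to $\UP$ converts this $\sin\Theta$ bound into $\|\UA Q-\UP\|_F\le C\log^{1/2}n/\sqrt n$, which is \eqref{eq:DKmodified}; that this $Q$ may be taken to coincide with the one furnished by Lemma~\ref{lem:2toinfty} is precisely how that $Q$ is constructed.

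Estimate \eqref{eq:innerprod} is then a second-order refinement that exploits that $Q$ is the orthogonal \emph{polar factor} of $\UA^T\UP$. Writing the singular value decomposition $\UA^T\UP=W_2\Sigma W_1^T$ with $\Sigma=\diag(\cos\theta_i)$, one has $Q=W_2W_1^T$, hence $\|\UA^T\UP-Q\|_F=\|\Sigma-I\|_F$. The elementary bound $1-\cos\theta_i\le\sin^2\theta_i$ gives $\|\Sigma-I\|_F\le\|\sin\Theta\|_F^2\le C\log n/n$, which is exactly the squared rate claimed in \eqref{eq:innerprod}; the point is that the deviation of $\UA^T\UP$ from an orthogonal matrix is governed by $\|\sin\Theta\|_F^2$ rather than $\|\sin\Theta\|_F$.

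Finally, for \eqref{eq:Qswap} I would factor out the ill-conditioning. Since each of $\SP^{-1/2},\SA^{-1/2}$ has norm of order $n^{-1/2}$ by \eqref{eq:specgrowth:1} and Weyl, and since $\SP^{-1/2}\bigl(Q\SA^{1/2}-\SP^{1/2}Q\bigr)\SA^{-1/2}=\SP^{-1/2}Q-Q\SA^{-1/2}$, the problem reduces to controlling the ``half-level'' commutator $\|Q\SA^{1/2}-\SP^{1/2}Q\|_F$, and thence, via a square-root (Sylvester) argument, to how well the top $d$ eigenvalues of $A$ and $P$ match after rotation by $Q$. \emph{This is the step I expect to be the main obstacle.} The naive input $|\lambda_k(A)-\lambda_k(P)|\le\|A-P\|\asymp\sqrt{n\log n}$ from Weyl is too crude to deliver the sharp $n^{-3/2}$ rate; one instead needs eigenvalue concentration finer than the operator-norm bound \eqref{eq:specgrowth:2}, coming from the delocalization of the eigenvectors of $P$ (entries of order $n^{-1/2}$), which makes the first-order perturbation $u_k^T(A-P)u_k$ only polylogarithmic, together with the specific construction of $Q$ in Lemma~\ref{lem:2toinfty}. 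Extracting this refined estimate, and rigorously tracking the genuine cancellation behind the extra factor of $n^{-1/2}$, is where the real work lies; the first-order versions of all three alignment bounds are routine. The whole lemma then holds on the single good event of probability $1-Cn^{-2}$ obtained by intersecting the events produced above.
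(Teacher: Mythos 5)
Four of your five bounds are handled correctly, and in fact you do more than the paper does: the paper's proof of this lemma is almost entirely by citation, quoting \eqref{eq:specgrowth:1} and \eqref{eq:specgrowth:2} as Observations 1 and 2 of \cite{LevAthTanLyzYouPri2017}, and \eqref{eq:innerprod} and \eqref{eq:Qswap} as Proposition 16 and Lemma 17 of \cite{lyzinski15_HSBM}; only \eqref{eq:DKmodified} is argued explicitly, via Theorem 2 of \cite{YuWanSam2015} combined with \eqref{eq:specgrowth:1}--\eqref{eq:specgrowth:2}, which is exactly your Davis--Kahan step. Your concentration argument for \eqref{eq:specgrowth:1} (reduce $P=XX^T$ to the $d\times d$ Gram matrix, matrix concentration plus Weyl), your appeal to \cite{Oliveira2009} for \eqref{eq:specgrowth:2}, and your polar-factor derivation of \eqref{eq:innerprod} --- $\|\UA^T\UP - Q\|_F = \|\Sigma - I\|_F \le \|\sin\Theta\|_F^2$, using $1-\cos\theta\le\sin^2\theta$ for principal angles $\theta\in[0,\pi/2]$ --- are all correct; the last of these is a legitimate self-contained proof of what the paper outsources to Proposition 16, and it correctly explains why the rate is the \emph{square} of the Davis--Kahan rate.

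The genuine gap is \eqref{eq:Qswap}, which you explicitly leave unfinished. Your diagnosis is half right: Weyl's inequality indeed only yields $n^{-1}\log^{1/2}n$, and delocalization must enter. But the missing mechanism is not a refined eigenvalue-concentration statement about $u_k^T(A-P)u_k$; it is the exact commutation identity
\begin{equation*}
\SA\,(\UA^T\UP) - (\UA^T\UP)\,\SP = \UA^T(A-P)\UP,
\end{equation*}
which follows from $A\UA = \UA\SA$ and $P\UP = \UP\SP$. Entrywise (together with its transpose, which handles the ordering your reduction requires) this says that the eigenvalue differences you must control appear only multiplied by the entries of $\UA^T\UP\approx Q$, and that these products equal the entries of the $d\times d$ projected noise matrix $\UA^T(A-P)\UP$. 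That matrix is polylogarithmic, not of order $\sqrt{n\log n}$: writing $\UA = \UP Q^T + (\UA - \UP Q^T)$, Hoeffding's inequality gives $\|\UP^T(A-P)\UP\|_F = O(\log^{1/2}n)$ (this is where delocalization actually enters), while Cauchy--Schwarz with \eqref{eq:DKmodified} and \eqref{eq:specgrowth:2} bounds the remainder by $\|\UA Q - \UP\|_F\|A-P\| = O(\log n)$. Dividing entrywise by $\lambda_i(A)^{1/2}+\lambda_j(P)^{1/2}\ge c\sqrt{n}$ gives the half-level commutator bound of order $n^{-1/2}\log n$; your own conjugation identity and the swap $\UA^T\UP\leftrightarrow Q$ (at cost $O(n^{-3/2}\log n)$ by \eqref{eq:innerprod} and \eqref{eq:specgrowth:1}) then deliver \eqref{eq:Qswap} up to logarithmic factors. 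This identity-plus-projection argument is precisely the content of the Lemma 17 of \cite{lyzinski15_HSBM} that the paper invokes; without it, your proposal contains no proof of \eqref{eq:Qswap}.
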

\begin{proof}
Equations~\eqref{eq:specgrowth:1} and~\eqref{eq:specgrowth:2}
are Observations 1 and 2, respectively, in
\cite{LevAthTanLyzYouPri2017}.
Equation~\eqref{eq:innerprod} and~\eqref{eq:Qswap} follow from,
respectively, Proposition 16 and Lemma 17 in \cite{lyzinski15_HSBM},
with the slight alteration that we use the
spectral norm bound of \cite{Oliveira2009}
rather than that of \cite{LuPen2013}.
A proof of Equation~\eqref{eq:DKmodified} appears in the course of
the proof of Lemma 5 in \cite{LevAthTanLyzYouPri2017}.
We restate it here for the sake of completeness.

By Theorem 2 in \cite{YuWanSam2015},
there exists orthonormal $Q \in \bbR^{d \times d}$ such that
\begin{equation*}
\| \UA Q - \UP \|_F \le \frac{ C\sqrt{d} \| A - P \| }{ \lambda_d(P) }.
\end{equation*}
Applying Equation~\ref{eq:specgrowth:2} yields~\eqref{eq:DKmodified}.
\end{proof}

\begin{lemma} \label{lem:frobnorm1}
With notation as above, letting 
$Q \in \bbR^{d \times d}$ denote the orthogonal matrix guaranteed by
Lemma~\ref{lem:2toinfty}, with probability at least $1-Cn^{-2}$,
\begin{equation*}
\| A\UA(\SA^{-1/2} Q -  Q\SP^{-1/2}) \|_F \le C n^{-1/2} \log^{1/2} n.
\end{equation*}
\end{lemma}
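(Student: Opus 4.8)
The plan is to reduce the bound to a $d\times d$ problem and then exploit the fact that a certain Sylvester-type operator is invertible with smallest singular value of order $\sqrt{n}$, which is exactly what supplies the needed $n^{-1/2}$ factor. Since the columns of $\UA$ are orthonormal eigenvectors of $A$ with eigenvalues collected in $\SA$, we have $A\UA = \UA\SA$, so that
\[
A\UA(\SA^{-1/2}Q - Q\SP^{-1/2}) = \UA(\SA^{1/2}Q - \SA Q\SP^{-1/2}).
\]
Because $\UA$ has orthonormal columns, left multiplication by $\UA$ preserves the Frobenius norm, and it suffices to bound $\|\SA^{1/2}Q - \SA Q\SP^{-1/2}\|_F$. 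I would introduce the two $d\times d$ quantities $E = Q\SP^{1/2} - \SA^{1/2}Q$ and $G = Q\SP - \SA Q$, and record the exact identity $\SA^{1/2}Q - \SA Q\SP^{-1/2} = -E + G\SP^{-1/2}$, obtained by substituting $\SA Q = Q\SP - G$.

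Next I would control $E$ through the Sylvester equation $\SA^{1/2}E + E\SP^{1/2} = G$, which follows by expanding both products. By \eqref{eq:specgrowth:1} and \eqref{eq:specgrowth:2} together with Weyl's inequality, $\lambda_d(A) \ge \lambda_d(P) - \|A-P\| \ge Cn$ for all large $n$, so $\SA^{1/2}$ and $\SP^{1/2}$ are symmetric positive definite with smallest eigenvalues of order $\sqrt{n}$; hence the Sylvester operator $E \mapsto \SA^{1/2}E + E\SP^{1/2}$ has all eigenvalues at least $C\sqrt{n}$, giving $\|E\|_F \le \|G\|_F/(C\sqrt{n})$. Combining this with $\|\SP^{-1/2}\| \le Cn^{-1/2}$ and the identity above yields $\|\SA^{1/2}Q - \SA Q\SP^{-1/2}\|_F \le \|E\|_F + \|G\|_F\,\|\SP^{-1/2}\| \le Cn^{-1/2}\|G\|_F$. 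The entire bound is thereby reduced to showing that $\|G\|_F = \|Q\SP - \SA Q\|_F$ is at most poly-logarithmic.

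Finally I would bound $\|G\|_F$ using the exact identity $\UA^T(A-P)\UP = \SA\,\UA^T\UP - \UA^T\UP\,\SP$, which follows from $\UA^T A = \SA\UA^T$ and $P\UP = \UP\SP$. Replacing $\UA^T\UP$ by $Q$ via \eqref{eq:innerprod} introduces an error $(Q-\UA^T\UP)\SP - \SA(Q-\UA^T\UP)$ of norm at most $\|Q-\UA^T\UP\|_F(\|\SP\|+\|\SA\|)$, reducing matters to bounding $\|\UA^T(A-P)\UP\|_F$; writing $\UA = \UP Q^T + R$ with $R$ controlled by \eqref{eq:DKmodified} and handling the cross term $R^T(A-P)\UP$ with \eqref{eq:specgrowth:2}, this in turn reduces to $\|\UP^T(A-P)\UP\|_F$. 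The key point, and the step I expect to be the main obstacle, is that conditional on $X$ the frame $\UP$ is a fixed, noise-independent $d$-dimensional basis, so $\UP^T(A-P)\UP$ is a projection of the mean-zero, bounded, independent entries of $A-P$ onto a fixed $d$-dimensional space; a Bernstein/Hanson--Wright argument then gives $\|\UP^T(A-P)\UP\|_F = O(\log^{1/2} n)$, far smaller than the operator-norm bound $\|A-P\| = O(\sqrt{n\log n})$ that a naive estimate would produce. The underlying difficulty throughout is that the $\Theta(n)$-scale matrices $A$, $\SA$, $\SP$ threaten to amplify the $O(n^{-1/2})$ estimation errors; the Sylvester reduction and this projected-noise concentration are precisely what prevent that loss. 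The crude alignment errors from \eqref{eq:innerprod} and \eqref{eq:DKmodified} contribute at the $\log n$ scale, and recovering the exact $\log^{1/2} n$ power in the statement requires the sharper control of these cross terms developed in the works underlying Lemma~\ref{lem:rdpgfacts}.
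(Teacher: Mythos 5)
Your individual steps check out, and the Sylvester-equation route is a legitimate, largely self-contained way to attack the problem, but as executed it does not prove the stated bound: it yields $\| A\UA(\SA^{-1/2} Q - Q\SP^{-1/2}) \|_F \le C n^{-1/2}\log n$, which falls short of the claimed $C n^{-1/2}\log^{1/2} n$ by a factor of $\log^{1/2} n$. The loss occurs exactly where you concede it does: the alignment term $\|\SA(Q-\UA^T\UP) - (Q-\UA^T\UP)\SP\|_F \le \|Q-\UA^T\UP\|_F\,(\|\SA\|+\|\SP\|)$ is only $O(\log n)$ by \eqref{eq:innerprod} and \eqref{eq:specgrowth:1}, and the cross term $\|R^T(A-P)\UP\|_F \le \|R\|_F\,\|A-P\|$ is likewise only $O(\log n)$ by \eqref{eq:DKmodified} and \eqref{eq:specgrowth:2}. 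Deferring the recovery of the correct logarithmic power to ``the works underlying Lemma~\ref{lem:rdpgfacts}'' is not a proof. Moreover, you did not need to go outside the paper at all: the required estimate is already packaged as Equation~\eqref{eq:Qswap} of Lemma~\ref{lem:rdpgfacts}, namely $\|Q\SA^{-1/2} - \SP^{-1/2}Q\|_F \le C n^{-3/2}\log^{1/2} n$ (the form with the factors ordered as $\SA^{-1/2}Q - Q\SP^{-1/2}$ follows by transposition). Combined with your own first step $A\UA = \UA\SA$ --- which is also the paper's first step --- the lemma is immediate: $\| A\UA(\SA^{-1/2} Q - Q\SP^{-1/2}) \|_F = \| \UA\SA(\SA^{-1/2} Q - Q\SP^{-1/2}) \|_F \le \|\SA\|\,\|\SA^{-1/2} Q - Q\SP^{-1/2}\|_F \le Cn \cdot C n^{-3/2}\log^{1/2} n$, using \eqref{eq:specgrowth:1} and \eqref{eq:specgrowth:2} to get $\|\SA\| = O(n)$. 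That two-line argument is the paper's entire proof; the $\log^{1/2} n$ is hidden inside \eqref{eq:Qswap}, which you overlooked.

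That said, the content of your derivation is sound where it is explicit: the identity $\SA^{1/2}Q - \SA Q\SP^{-1/2} = -E + G\SP^{-1/2}$, the Sylvester bound $\|E\|_F \le \|G\|_F/(C\sqrt{n})$ (valid since $\SA^{1/2}$ and $\SP^{1/2}$ are diagonal with spectra bounded below by $C\sqrt{n}$, the former by Weyl's inequality), the exact relation $\UA^T(A-P)\UP = \SA\,\UA^T\UP - \UA^T\UP\,\SP$, and the concentration bound $\|\UP^T(A-P)\UP\|_F = O(\log^{1/2} n)$, which is precisely the estimate the paper itself proves as Equation~\eqref{eq:unitary1} in its sparse-regime analysis. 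It is also worth noting that your weaker $n^{-1/2}\log n$ bound would suffice for every downstream use of this lemma in the paper: Equation~\eqref{eq:numbered4LL:1} in the proof of Lemma~\ref{lem:wtdlinear} only invokes the bound at the $n^{-1/2}\log n$ scale. So the gap does not threaten the paper's conclusions, but as a proof of the lemma as stated it is incomplete, and the incompleteness is avoidable simply by citing \eqref{eq:Qswap}.
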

\begin{proof}
Let $E = A - \UA \SA \UA^T$ be the residual after making the best rank-$d$
approximation to $A$. By definition, the eigenvectors of $E$ are
orthogonal to the columns of $\UA$, whence $E \UA = 0$, and thus
$A \UA = \UA \SA$.
\begin{equation*}
\| A\UA(\SA^{-1/2} Q -  Q\SP^{-1/2}) \|_F
= \| \UA \SA \UA^T \UA (\SA^{-1/2} Q -  Q\SP^{-1/2}) \|_F
\le \| \SA \| \| \SA^{-1/2} Q - Q\SP^{-1/2} \|_F.
\end{equation*}
Lemma~\ref{lem:rdpgfacts} bounds the spectral norm as $O(n)$,
the Frobenius norm as $O(n^{-3/2} \log^{1/2} n)$, which completes the proof.
\end{proof}

The following lemma is a generalization of
Lemma 10 of \cite{LyzSusTanAthPri2014} to the
case where $\Delta = \E_F X_1 X_1^T \in \bbR^{d \times d}$ may have
repeated eigenvalues.
\begin{lemma} \label{lem:frobnorm2}
With notation and setup as above,
Let $Q \in \bbR^{d \times d}$ be the orthogonal matrix guaranteed by
Lemma~\ref{lem:2toinfty}.
With probability at least $1-Cn^{-2}$,
\begin{equation*}
\| A(\UA Q - \UP)\SP^{-1/2} \|_F \le C n^{-1/2} \log n. log factor
\end{equation*}
\end{lemma}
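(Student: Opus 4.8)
The plan is to avoid the naive submultiplicative bound $\|A\|\,\|\UA Q - \UP\|_F\,\|\SP^{-1/2}\|$, which is of order $n \cdot n^{-1/2}\log^{1/2}n \cdot n^{-1/2} = \log^{1/2} n$ and hence far too weak. Instead I will expose the cancellation hidden in the product by splitting it into two pieces, each controllable at the target rate using only the subspace-level estimates of Lemma~\ref{lem:rdpgfacts}.

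First I would use the eigen-relation $A\UA = \UA\SA$ together with the insertion of $\UA\SA\UA^T\UP\SP^{-1/2}$ to obtain the decomposition
\begin{equation*}
A(\UA Q - \UP)\SP^{-1/2} = \UA\SA(Q - \UA^T\UP)\SP^{-1/2} + (\UA\SA\UA^T - A)\UP\SP^{-1/2}.
\end{equation*}
The first term is handled by submultiplicativity: $\|\UA\| = 1$, $\|\SA\| = O(n)$ (from~\eqref{eq:specgrowth:1} and~\eqref{eq:specgrowth:2} via Weyl's inequality), $\|Q - \UA^T\UP\|_F = O(n^{-1}\log n)$ by~\eqref{eq:innerprod}, and $\|\SP^{-1/2}\| = O(n^{-1/2})$ by~\eqref{eq:specgrowth:1}, so the product is $O(n^{-1/2}\log n)$, exactly the desired rate.

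The second term, writing $E = A - \UA\SA\UA^T$ for the residual of the rank-$d$ truncation, equals $-E\UP\SP^{-1/2}$, and here the naive bound $\|E\|\,\|\UP\|_F\,\|\SP^{-1/2}\|$ again loses, yielding only $O(\log^{1/2} n)$. The crucial step is to exploit that the residual annihilates the estimated invariant subspace, $E\UA = A\UA - \UA\SA\UA^T\UA = \UA\SA - \UA\SA = 0$, which lets me rewrite $E\UP = E(\UP - \UA Q)$ and hence $\|E\UP\SP^{-1/2}\|_F \le \|E\|\,\|\UA Q - \UP\|_F\,\|\SP^{-1/2}\|$. Because $\UA\SA\UA^T$ is the best rank-$d$ approximation of $A$ and $P$ has rank $d$, $\|E\| \le \|A - P\| = O(\sqrt{n\log n})$ by~\eqref{eq:specgrowth:2}; combined with $\|\UA Q - \UP\|_F = O(n^{-1/2}\log^{1/2}n)$ from~\eqref{eq:DKmodified} and $\|\SP^{-1/2}\| = O(n^{-1/2})$, this term too is $O(n^{-1/2}\log n)$. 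Since every invoked bound holds with probability at least $1 - Cn^{-2}$, a union bound gives the claim.

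The main obstacle is precisely the second term: the substitution $E\UP = E(\UP - \UA Q)$, justified by $E\UA = 0$, is what converts a quantity of size $O(\log^{1/2}n)$ into one carrying the extra factor $\|\UA Q - \UP\|_F = O(n^{-1/2}\log^{1/2}n)$ needed to reach the target. Identifying this cancellation, rather than bounding each factor crudely, is the whole content of the lemma; and it is exactly the step that survives the passage to repeated eigenvalues of $\Delta$, since it relies only on the subspace bounds~\eqref{eq:innerprod} and~\eqref{eq:DKmodified} rather than on individual eigenvectors.
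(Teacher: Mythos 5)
Your proposal is correct and is essentially the paper's own proof: your two terms $\UA\SA(Q-\UA^T\UP)\SP^{-1/2}$ and $E(\UA Q-\UP)\SP^{-1/2}$ are exactly what the paper obtains by writing $A = \UA\SA\UA^T + E$ and expanding, and you invoke the same four bounds from Lemma~\ref{lem:rdpgfacts} (Equations~\eqref{eq:specgrowth:1}, \eqref{eq:specgrowth:2}, \eqref{eq:innerprod}, \eqref{eq:DKmodified}) to get the identical $O(n^{-1/2}\log n)$ rate. The only cosmetic difference is that you reach the second term via $E\UP = E(\UP - \UA Q)$ using $E\UA = 0$, whereas the paper's decomposition produces the factor $(\UA Q - \UP)$ next to $E$ directly, with no detour through $E\UP$.
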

\begin{proof}
Let $E = A - \UA \SA \UA^T$ as in the previous proof.
Taking $Q \in \bbR^{d \times d}$ to be as in Lemma~\ref{lem:2toinfty},
by the triangle inequality and basic properties of the Frobenius norm,
\begin{equation*} \begin{aligned}
\| A(\UA Q - \UP)\SP^{-1/2} \|_F
&= \| (\UA \SA \UA^T + E)(\UA Q - \UP) \SP^{-1/2} \|_F \\
&\le \| \UA \SA \| \| Q - \UA^T \UP \|_F \| \SP^{-1/2} \|
  + \| E \| \| \UA Q - \UP \|_F \| \SP^{-1/2} \|.
\end{aligned} \end{equation*}
Applying Lemma~\ref{lem:rdpgfacts},
with probability $1-Cn^{-2}$,
Equations~\eqref{eq:specgrowth:1} and~\eqref{eq:specgrowth:2},
both hold, so that
\begin{equation*}
\| \SP^{-1/2} \| = O(n^{-1/2}) \text{ and } \| \SA \| = O(n),
\end{equation*}
and Equation~\eqref{eq:innerprod} implies
\begin{equation*} \label{eq:Ugap:1}
\| \UA \| \| \SA \| \| Q - \UA^T \UP \|_F \| \SP^{-1/2} \|
\le  C n^{-1/2} \log n.
\end{equation*}
Similarly, since $\| E \| \le \|A-P\| = O(n^{1/2} \log n)$
by Equation~\eqref{eq:specgrowth:2},
Equation~\eqref{eq:DKmodified} implies that
\begin{equation*} \label{eq:Ugap:2}
\| E \| \| \UA Q - \UP \|_F \| \SP^{-1/2} \|
\le C n^{-1/2} \log^{1/2} n.
\end{equation*}
Thus, combining the above two displays,
\begin{equation*}
\| A(\UA Q - \UP)\SP^{-1/2} \|_F
\le C n^{-1/2} \log n,
\end{equation*}
as we set out to show.
\end{proof}

\section{Proof of Theorems~\ref{thm:ustat:consistent} and~\ref{thm:Uboot}}
\label{apx:inprob}
Here we provide detailed proofs of the results in Section~\ref{sec:ustat}.
Both rely on a second-order Taylor expansion
of the U-statistic evaluated at the true latent positions.
A similar argument appears in \cite{TanAthSusLyzPri2017}.
The main technical challenge here comes from the more complicated dependency
structure of U-statistics which in turn requires a more involved
indexing and counting argument.
The following two lemmas will prove useful in bounding the linear and
quadratic terms, respectively, in the Taylor expansion.
Throughout this appendix we use $\calC^n_m$ to denote the set of all
$m$-tuples of strictly increasing integers from $[n]$. That is,
$\calC^n_m = \{ (i_1,i_2,\dots,i_m) :
		i_1,i_2,\dots,i_m \in [n],
	1 \le i_1 < i_2 < \dots < i_m \le n \}$.
We write $\bbR^{\calC^n_m}$ to denote the set of vectors over
the reals with entries indexed by the $\binom{n}{m}$ elements of
$\calC^n_m$, so that if $v \in \bbR^{\calC^n_m}$, then
$v_c \in \bbR$ for each $c \in \calC^n_m$.
For $c \in \calC^n_m$ and $X \in \bbR^{n \times d}$,
we use $X_c$ to denote the $m$-by-$d$ matrix formed by stacking the rows of
$X$ whose indices appear in $c$.

\begin{lemma} \label{lem:wtdlinear}
Let $F$ be a $d$-dimensional inner product distribution
with $(A,X) \sim \RDPG(F,n)$ and let $\Xhat = \ASE(A,d)$.
Let $h : (\bbR^d)^m \rightarrow \bbR$ be a kernel function,
symmetric in its arguments,
satisfying Assumptions~\ref{assumu:hinvariant} and~\ref{assumu:smooth}.
Then there exists orthogonal matrix $Q \in \bbR^{d \times d}$ such that
for any fixed $v \in \bbR^{\calC^n_m}$,
with probability at least $1-Cn^{-2}$,
\begin{equation*} \label{eq:linearbound}
\left| \sum_{\ivec \in \calC^n_m} v_{\ivec}
	(\Xhat Q - X)_{\ivec}^T (\nabla h)(X_{\ivec}) \right|
\le C \max_{\ivec \in \calC^n_m} |v_{\ivec}| \binom{n-1}{m-1} \log n. 
\end{equation*}
\end{lemma}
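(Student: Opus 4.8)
The plan is to avoid the naive triangle-inequality bound on the sum, which overshoots the target by a factor of $\sqrt{n}$, and instead to exploit the fact that the dominant part of the estimation error $\Xhat Q - X$ is linear in the conditionally mean-zero noise $A-P$, where $P = XX^T$. First I would regroup the sum according to which vertex is being perturbed. Writing $\nabla_k h(X_c)$ for the gradient of $h$ in its $k$-th ($d$-dimensional) argument evaluated at $X_c$, the summand expands as $\sum_{k=1}^m (Q^T\Xhat_{i_k}-X_{i_k})^T \nabla_k h(X_c)$, so collecting the coefficient of each row $Q^T\Xhat_j - X_j$ yields
\[
\sum_{c\in\calC^n_m} v_c\,(\Xhat Q - X)_c^T (\nabla h)(X_c) = \langle \Xhat Q - X,\, G\rangle_F, \qquad G_{j,:} = \sum_{k=1}^m\sum_{c:\, i_k=j} v_c\,\nabla_k h(X_c)^T .
\]
By Assumption~\ref{assumu:smooth}, $\nabla h$ is continuous on the compact closure of $\supp F$, hence bounded there by some $C$; since each $X_i\in\supp F$ almost surely and each vertex $j$ lies in exactly $\binom{n-1}{m-1}$ of the tuples $c$, the triangle inequality gives $\|G_{j,:}\|\le C\max_c|v_c|\binom{n-1}{m-1}$, and therefore $\|G\|_F\le C\sqrt{n}\,\max_c|v_c|\binom{n-1}{m-1}$. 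This Frobenius bound on $G$ is the combinatorial heart of the argument.

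Next I would split $\Xhat Q - X$ using the standard ASE expansion underlying Lemmas~\ref{lem:frobnorm1} and~\ref{lem:frobnorm2}. Taking $Q$ as in Lemma~\ref{lem:2toinfty} and identifying $X=\UP\SP^{1/2}$ (a rotation absorbed into $Q$, harmless since $\|\nabla h\|$ is rotation-invariant by Assumption~\ref{assumu:hinvariant}), one has $\Xhat Q - X = (A-P)\UP\SP^{-1/2} + T_1 + T_2$, where $T_1 = A\UA(\SA^{-1/2}Q - Q\SP^{-1/2})$ and $T_2 = A(\UA Q - \UP)\SP^{-1/2}$ satisfy $\|T_1\|_F + \|T_2\|_F \le Cn^{-1/2}\log n$ with probability $1-Cn^{-2}$. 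For these two remainder terms Cauchy--Schwarz already suffices: $|\langle T_1+T_2,\,G\rangle_F|\le \|T_1+T_2\|_F\,\|G\|_F \le Cn^{-1/2}\log n\cdot C\sqrt{n}\,\max_c|v_c|\binom{n-1}{m-1}$, which is exactly of the target order $\max_c|v_c|\binom{n-1}{m-1}\log n$. The point is that $T_1,T_2$ are a full factor of $\sqrt{n}$ smaller in Frobenius norm than the leading term, so here the loss from Cauchy--Schwarz is affordable.

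The main obstacle---and the only place where cancellation must genuinely be used---is the leading term $\langle (A-P)\UP\SP^{-1/2},\,G\rangle_F$, for which Cauchy--Schwarz would again cost a spurious $\sqrt{n}$, since $\|(A-P)\UP\SP^{-1/2}\|_F = O(\sqrt{\log n})$ while $\|G\|_F$ carries a $\sqrt{n}$. Conditioning on $X$ (which fixes $P$, $\UP$, $\SP$, and $G$), I would write this inner product as $\sum_{i<a}(A-P)_{ia}\widetilde{w}_{ia}$, a sum of independent, mean-zero, bounded terms, with weights $\widetilde{w}_{ia}$ built from $\langle(\UP\SP^{-1/2})_{a,:},\,G_{i,:}\rangle$. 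A Cauchy--Schwarz factorization bounds the variance proxy by $\sum \widetilde{w}_{ia}^2 \le C\|\UP\SP^{-1/2}\|_F^2\,\|G\|_F^2 = C\,\tr(\SP^{-1})\,\|G\|_F^2$, and since $\lambda_d(P)\ge C_1 n$ by Equation~\eqref{eq:specgrowth:1} we have $\tr(\SP^{-1}) = O(1/n)$; thus the variance proxy is $O\big((\max_c|v_c|)^2\binom{n-1}{m-1}^2\big)$, the $n$ in $\|G\|_F^2$ cancelling the $1/n$ from $\tr(\SP^{-1})$.

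Hoeffding's inequality then yields $|\langle (A-P)\UP\SP^{-1/2},\,G\rangle_F| \le C\max_c|v_c|\binom{n-1}{m-1}\sqrt{\log n}$ with probability $1-Cn^{-2}$; the deterministic diagonal contribution (from $(A-P)_{ii}=-\|X_i\|^2$) is $X$-measurable and of strictly lower order, using that $\|(\UP\SP^{-1/2})_{i,:}\|=O(1/n)$, and is handled separately. Combining this with the remainder bound of the previous paragraph and a union bound over the high-probability events of Lemmas~\ref{lem:2toinfty},~\ref{lem:frobnorm1}, and~\ref{lem:frobnorm2} gives $\big|\sum_{c} v_c (\Xhat Q - X)_c^T (\nabla h)(X_c)\big| \le C\max_c|v_c|\binom{n-1}{m-1}\log n$, as claimed. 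I expect the conditioning bookkeeping---ensuring that the weights $\widetilde{w}_{ia}$ are $X$-measurable so that Hoeffding applies cleanly to the conditionally independent noise $A-P$, while the separately-controlled events for $T_1,T_2$ are folded in by a union bound---to be the most delicate part to state carefully.
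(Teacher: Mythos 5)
Your proposal is correct and follows essentially the same route as the paper's proof: the same three-term decomposition $\Xhat Q - X = (A-P)\UP\SP^{-1/2} + A\UA(\SA^{-1/2}Q - Q\SP^{-1/2}) + A(\UA Q - \UP)\SP^{-1/2}$, Cauchy--Schwarz against Lemmas~\ref{lem:frobnorm1} and~\ref{lem:frobnorm2} for the two remainder terms, and a conditional Hoeffding bound (with the diagonal of $A-P$ handled separately) for the leading noise term. Your coefficient matrix $G$ is exactly the paper's $\bMtilde$ with the weights $v$ folded in, so the regrouping, the $O(1/n)$ spectral factor from $\lambda_d(P) \ge C_1 n$, and the resulting variance proxy coincide with the paper's argument up to notation.
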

\begin{proof}
Define the map
$\calT_{m} : \bbR^{n \times d} \rightarrow \bbR^{\calC^n_m \times md}$,
which transforms the matrix $Y \in \bbR^{n \times d}$
with rows $Y_i \in \bbR^d$ for $i=1,2,\dots,n$ into the matrix
$\Ytilde = \calT_m(Y) \in \bbR^{\calC^n_m \times md}$ as follows.
Indexing the $\binom{n}{m}$ rows of $\Ytilde = \calT_m(Y)$
by the $\binom{n}{m}$ elements of $\calC^n_m$,
define the row indexed by $\ivec = (i_1,i_2,\dots,i_m) \in \calC^n_m$ as
\begin{equation*}
\Ytilde_{\ivec} = \begin{bmatrix} Y_{i_1} \\
                                Y_{i_2} \\
                                \vdots \\
                                Y_{i_m} \end{bmatrix}^T 
	\in \bbR^{md}.
\end{equation*}
Denote by $\diag(v) \in \bbR^{\calC^n_m \times \calC^n_m}$
the diagonal matrix with entries given by the elements of $v$.
Define $\bM = \bM(\nabla h) \in \bbR^{\calC^n_m \times md}$
with the row indexed by $\ivec = (i_1,i_2,\dots,i_m) \in \calC^n_m$ given by
\begin{equation*}
\bM_{\ivec} = (\nabla h)(X_{\ivec})
= (\nabla h)\left( \begin{bmatrix} X_{i_1} \\
                        X_{i_2} \\
                        \vdots \\
                        X_{i_m} \end{bmatrix} \right) \in \bbR^{md}.
\end{equation*}
That is, the row of $\bM$ indexed by $\ivec \in \calC^n_m$
is the gradient of $h : \bbR^{md} \rightarrow \bbR$ evaluated at
$X_{\ivec} =[X_{i_1}^T, X_{i_2}^T,\dots, X_{i_m}^T ]^T$.
With these three definitions in hand, we have
\begin{equation*}
\sum_{\ivec \in \calC^n_m} v_{\ivec}
	(\Xhat Q - X)_{\ivec}^T (\nabla h)(X_{\ivec})
= \tr \bM^T \diag(v) \calT_m( \Xhat Q - X ).
\end{equation*}
Using the fact that
$X = \UP \SP^{1/2} = P \UP \SP^{-1/2}$
and
$\Xhat = \UA \SA^{1/2} = A \UA \SA^{-1/2}$,
adding and subtracting appropriate quantities, and using the
linearity of the trace and $\calT_m$, we have
\begin{equation*} \begin{aligned}
\sum_{\ivec \in \calC^n_m}
         & v_{\ivec} (\Xhat Q - X)_{\ivec}^T (\nabla h)(X_{\ivec}) \\
&= \tr\left[ \bM^T \diag(v)
	\calT_m\left(A\UA(\SA^{-1/2} Q -  Q\SP^{-1/2} ) \right) \right] \\
&~~~~~~+ \tr \left[
	\bM^T \diag(v) \calT_m\left( A(\UA Q - \UP) \SP^{-1/2} \right)
		\right]
       + \tr \left[
		\bM^T \diag(v) \calT_m\left( (A-P)\UP\SP^{-1/2} \right).
		\right]
\end{aligned} \end{equation*}
Applying the triangle inequality,
Cauchy-Schwarz and submultiplicativity, we have
\begin{equation} \label{eq:triangle:main}
\begin{aligned}
&\left| \sum_{\ivec \in \calC^n_m} v_{\ivec}
	(\Xhat Q - X)_{\ivec}^T (\nabla h)(X_{\ivec}) \right| \\
&~~~\le \| \bM \|_F \| \diag(v) \|
	 \left[ \| \calT_m( A\UA(\SA^{-1/2} Q -  Q\SP^{-1/2} )) \|_F
	+ \| \calT_m( A(\UA Q - \UP) \SP^{-1/2} ) \|_F \right] \\
&~~~~~~~~~+ \left| \tr\left[ \bM^T \diag(v) \calT_m( (A-P)\UP\SP^{-1/2} ) \right] \right|.
\end{aligned} \end{equation}

By definition of $\calT_m$, each row of $Z \in \bbR^{n \times d}$
appears in $\binom{n-1}{m-1}$ rows of $\calT_m(Z)$, and thus
$\| \calT_m( Z ) \|_F = \binom{n-1}{m-1}^{1/2} \| Z \|_F$.
Using this fact and applying Lemma~\ref{lem:frobnorm1},
\begin{equation} \label{eq:numbered4LL:1}
\begin{aligned}
\| \calT_m( A\UA(\SA^{-1/2} Q -  Q\SP^{-1/2} )) \|_F
&= \binom{n-1}{m-1}^{1/2} \| A\UA(\SA^{-1/2} Q -  Q\SP^{-1/2} ) \|_F \\
&\le C \binom{n-1}{m-1}^{1/2} n^{-1/2} \log n
\end{aligned} \end{equation}
Similarly, this time using Lemma~\ref{lem:frobnorm2},
\begin{equation} \label{eq:numbered4LL:2}
\begin{aligned}
\| \calT_m( A(\UA Q - \UP) \SP^{-1/2} ) \|_F
&\le \binom{n-1}{m-1}^{1/2} \| A (\UA Q - \UP) \SP^{-1/2} \|_F \\
&\le C \binom{n-1}{m-1}^{1/2} n^{-1/2} \log n.
\end{aligned} \end{equation}
Combining Equations~\eqref{eq:numbered4LL:1}
and~\eqref{eq:numbered4LL:2} and using the fact that
\begin{equation*}
\| \bM \|_F = \left( \sum_{\ivec \in \calC^n_m} \| (\nabla h)(X_{\ivec}) \|^2
                \right)^{1/2}
\le C \sqrt{ \binom{n}{m} }
\end{equation*}
by Assumption~\ref{assumu:smooth},
it follows that with probability at least $1-Cn^{-2}$,
\begin{equation} \label{eq:triangle:sub1}
\begin{aligned}
  \| \bM \|_F & \| \diag(v) \|
	\left[ \| \calT_m( A\UA(\SA^{-1/2} Q -  Q\SP^{-1/2} )) \|_F
            + \| \calT_m( A(\UA Q - \UP) \SP^{-1/2} ) \|_F \right] \\
&\le C n^{-1/2} \| \diag(v) \|
	 \left( \binom{n}{m} \binom{n-1}{m-1} \right)^{1/2} \log n
 \le C \| \diag(v) \| \binom{n-1}{m-1} \log n,
\end{aligned} \end{equation}
where we have used the fact that $m$ is assumed constant in $n$.

Returning to Equation~\eqref{eq:triangle:main}, it remains to bound
\begin{equation*}
 \left| \tr \left( \bM^T \diag(v) \calT_m( (A-P) \UP\SP^{-1/2} )
	\right) \right|.
\end{equation*}
By definition, for $\ivec \in \calC^n_m, k \in [m], s \in [d]$,
\begin{equation*}
\left( \calT_m( (A-P) \UP\SP^{-1/2} ) \right)_{\ivec,d(k-1) + s}
= \left[ (A-P) \UP\SP^{-1/2} \right]_{i_k, s}.
\end{equation*}
For any $\ivec = (i_1,i_2,\dots,i_m) \in \calC^n_m$ and $j \in [n]$,
if $j = i_k$ for some $k \in [m]$, define $\tau(j,\ivec) = k$.
With this notation in hand,
define the matrix $\bMtilde \in \bbR^{n \times d}$ by
\begin{equation*}
\bMtilde_{j,s} = \sum_{\ivec \in \calC^n_m : j \in \ivec}
        v_{\ivec}
	\bM_{\ivec, \tau(j,\ivec) + s},~~~j \in [n], s \in [d],
\end{equation*}
and note that
for some constant $C_{F,h} < \infty$ depending on $h$ and $F$
but not depending on $n$,
\begin{equation} \label{eq:Mtildebound}
\left| \bMtilde_{j,s} \right| \le C_{F,h} \binom{n-1}{m-1} \| \diag(v) \|,
\end{equation}
where we have again used Assumption~\ref{assumu:smooth}.
With this definition,
let $u_s \in \bbR^n$, $s=1,2,\dots,d$ be the eigenvectors of $P$
with non-zero eigenvalues (i.e., the columns of $\UP$),
so that $u_{s,i}$ denotes the $i$-th entry of the $s$-th eigenvector
of $P$. Then
\begin{align}
\tr \bM^T & \diag(v) \calT_m( (A-P) \UP\SP^{-1/2} )
= \sum_{\ivec \in \calC^n_m}
        v_{\ivec}
\sum_{j=1}^{md} \nonumber
	\bM_{\ivec,j} \calT_m\left( (A-P) \UP \SP^{-1/2} \right)_{\ivec,j}
        \nonumber \\
&= \sum_{\ivec \in \calC^n_m}
   v_{\ivec} \sum_{k=1}^m \sum_{s=1}^d
  \bM_{\ivec,m(k-1) + s} \left[ (A-P) \UP \SP^{-1/2} \right]_{i_k, s}
        \nonumber \\
&= \sum_{s=1}^d \lambda_s^{-1/2} \sum_{i=1}^n \sum_{j=1}^n
        \bMtilde_{i,s} (A-P)_{i,j} u_{s,j}
        \nonumber \\
&= \sum_{s=1}^d 2 \lambda_s^{-1/2} \sum_{1 \le i < j \le n}
        \bMtilde_{i,s} (A-P)_{i,j} u_{s,j} \label{eq:twosums}
- \sum_{s=1}^d \lambda_s^{-1/2} \sum_{i=1}^n
        \bMtilde_{i,s} P_{i,i} u_{s,i}.
\end{align}
The second term is bounded by
\begin{equation} \label{eq:tracebound1}
\begin{aligned}
\left| \sum_{s=1}^d \lambda_s^{-1/2} \sum_{i=1}^n
        \bMtilde_{i,s} P_{i,i} u_{s,i} \right|
&\le \sum_{s=1}^d \lambda_s^{-1/2}
        \left( \sum_{i=1}^n \bMtilde_{i,s}^2 P_{i,i}^2 \right)^{1/2}
        \| u_s \| \\
\le C d \| \diag(v) \| \binom{n-1}{m-1},
\end{aligned}
\end{equation}
where the first inequality follows from the Cauchy-Schwarz inequality,
and the second inequality follows from
Equation~\eqref{eq:Mtildebound}, the fact that $\| u_s \| = 1$,
the fact that $P_{i,i}^2 \le 1$, and Equation~\eqref{eq:specgrowth:1}.
For fixed $s \in [d]$, the sum over $1 \le i < j \le n$
in~\eqref{eq:twosums} is a sum of independent
mean-$0$ random variables. Hoeffding's inequality combined with
Equation~\eqref{eq:Mtildebound} yields
\begin{equation*} \begin{aligned}
\Pr&\left[ \left| \sum_{1 \le i < j \le n}
                \bMtilde_{i,s} (A-P)_{i,j} u_{s,j} \right| \ge t \right]
\le 2\exp\left\{ \frac{ -2 t^2 }{ \| u_s \|^2 \sum_{i=1}^n \bMtilde_{i,s}^2 }
                        \right\} \\
&~~~~~~~~~\le
2\exp\left\{ \frac{ -2 t^2 }{ C n \| \diag(v) \|^2 \binom{n-1}{m-1}^2 } \right\}.
\end{aligned} \end{equation*}
Taking $t = C \| \diag(v) \| \binom{n-1}{m-1} \sqrt{ n \log n}$
for suitably large constant $C > 0$, a union bound over all $s \in [d]$
implies that with probability $1- Cn^{-2}$,
it holds for all $s \in [d]$ that
\begin{equation*}
\left| \sum_{1 \le i < j \le n}
                \bMtilde_{i,s} (A-P)_{i,j} u_{s,j} \right| 
\le C \| \diag(v) \| \binom{n-1}{m-1} \sqrt{n \log n}.
\end{equation*}
Applying Equation~\eqref{eq:specgrowth:1} to bound $\lambda_s^{-1/2}$
and using Assumption~\ref{assumu:smooth},
it holds with probability at least $1-Cn^{-2}$ that
\begin{equation*}
\left| \sum_{s=1}^d 2 \lambda_s^{-1/2} \sum_{1 \le i < j \le n}
        \bMtilde_{i,s} (A-P)_{i,j} u_{s,j} \right|
\le C d \| \diag(v) \| \binom{n-1}{m-1} \log^{1/2} n.
\end{equation*}
Combining this with Equation~\eqref{eq:tracebound1},
both sums in \eqref{eq:twosums} are bounded by
$C d \| \diag(v) \| \binom{n-1}{m-1} \log^{1/2} n$, and it holds
with probability at least $1- Cn^{-2}$ that,
since $d$ is a constant,
\begin{equation*} 
\left| \tr \bM^T \calT_m( (A-P) \UP\SP^{-1/2} ) \right|
\le C \| \diag(v) \| \binom{n-1}{m-1} \log^{1/2} n.
\end{equation*}
Applying this and Equation~\eqref{eq:triangle:sub1} to
Equation~\eqref{eq:triangle:main}, we have
\begin{equation*}
\left| \sum_{\ivec \in \calC^n_m}
	v_{\ivec}
        (\Xhat Q - X)_{\ivec}^T (\nabla h)(X_{\ivec}) \right|
\le C \| \diag(v) \| \binom{n-1}{m-1} \log n ,
\end{equation*}
and the result follows by construction of $\diag(v)$.
\end{proof}

\begin{lemma} \label{lem:wtdquadratic}
Let $(A,X) \sim \RDPG(F,n)$ for some $d$-dimensional inner product distribution
$F$, so that $X_1,X_2,\dots,X_n \iid F$ and let $\Xhat = \ASE(A,d)$,
with rows given by $\Xhat_1,\Xhat_2,\dots,\Xhat_n \in \bbR^d$.
For each $\ivec \in \calC^n_m$,
let $Z_{\ivec} \in \bbR^{md}$ be some point on the
line segment connecting $(\Xhat Q)_{\ivec}$ and $X_{\ivec}$.
Suppose that $h : (\bbR^d)^m \rightarrow \bbR$ a kernel,
symmetric in its arguments, satisfying
Assumptions~\ref{assumu:hinvariant} and~\ref{assumu:smooth}.
Let $v \in \bbR^{\calC^n_m}$ be a fixed vector
and let $Q \in \bbR^{d \times d}$ be the orthogonal matrix guaranteed by
Lemma~\ref{lem:2toinfty}.
For all suitably large $n$, with probability at least $1-Cn^{-2}$,
\begin{equation*}
\left| \sum_{\ivec \in \calC^n_m} v_{\ivec}
        (\Xhat Q - X)_{\ivec}^T(\nabla^2 h)(Z_{\ivec})(\Xhat Q - X)_{\ivec}
	\right|
\le C \max_{\ivec \in \calC^n_m} |v_{\ivec}| \binom{n-1}{m-1} \log^2 n.
\end{equation*}
\end{lemma}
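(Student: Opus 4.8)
The plan is to bound the quadratic form crudely, term by term, using only the uniform bound on the Hessian from Assumption~\ref{assumu:smooth} together with the $2\to\infty$ control on the embedding error from Lemma~\ref{lem:2toinfty}. In contrast to the linear term treated in Lemma~\ref{lem:wtdlinear}, no delicate cancellation is required here, since squaring the embedding error already supplies ample decay; the whole argument is essentially Cauchy--Schwarz plus a counting step.

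First I would verify that, for all suitably large $n$, every intermediate point $Z_{\ivec}$ lies in the neighborhood $\calS$ of $\supp F$ on which $\nabla^2 h$ is uniformly bounded. By Lemma~\ref{lem:2toinfty}, with probability at least $1-Cn^{-2}$ we have $\max_{i \in [n]} \| Q^T \Xhat_i - X_i \| \le C n^{-1/2} \log n$, so each $d$-dimensional block of $(\Xhat Q)_{\ivec}$ is within $Cn^{-1/2}\log n$ of the corresponding block of $X_{\ivec} \in (\supp F)^m$. Since $\calS$ contains a fixed-radius neighborhood of $\supp F$ and the embedding error tends to $0$, for large $n$ the entire segment joining $(\Xhat Q)_{\ivec}$ and $X_{\ivec}$, and hence each $Z_{\ivec}$, lies in $\calS$; Assumption~\ref{assumu:smooth} then gives $\sup_{\ivec \in \calC^n_m} \| (\nabla^2 h)(Z_{\ivec}) \| \le C$ on this event.

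Working on this event, I would bound each summand by the spectral norm of the (symmetric) Hessian:
\[
\left| (\Xhat Q - X)_{\ivec}^T (\nabla^2 h)(Z_{\ivec}) (\Xhat Q - X)_{\ivec} \right|
\le \| (\nabla^2 h)(Z_{\ivec}) \| \, \| (\Xhat Q - X)_{\ivec} \|^2
\le C \sum_{k=1}^m \| Q^T \Xhat_{i_k} - X_{i_k} \|^2 \le C n^{-1} \log^2 n,
\]
where the final inequality uses Lemma~\ref{lem:2toinfty} once more together with the assumption that $m$ is constant in $n$. Summing over the $\binom{n}{m}$ tuples and replacing each $|v_{\ivec}|$ by $\max_{\ivec} |v_{\ivec}|$ yields a bound of $C \max_{\ivec} |v_{\ivec}| \binom{n}{m} n^{-1} \log^2 n$; invoking $\binom{n}{m} = (n/m)\binom{n-1}{m-1}$ and absorbing the constant $m$ into $C$ gives the claimed $C \max_{\ivec} |v_{\ivec}| \binom{n-1}{m-1} \log^2 n$.

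The only genuine obstacle is the first step, namely confirming that all $\binom{n}{m}$ intermediate points $Z_{\ivec}$ remain simultaneously inside the region $\calS$ where the Hessian is controlled; this is exactly what forces the qualifier ``for all suitably large $n$.'' Once that containment is established on the high-probability event of Lemma~\ref{lem:2toinfty}, the remaining estimates are immediate, and the $\log^2 n$ factor arises transparently as the square of the per-coordinate embedding error of order $\log n / \sqrt{n}$.
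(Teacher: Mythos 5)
Your proposal is correct and matches the paper's own proof essentially step for step: both arguments first use Lemma~\ref{lem:2toinfty} to place every $Z_{\ivec}$ inside the neighborhood $\calS$ where Assumption~\ref{assumu:smooth} bounds the Hessian, then apply Cauchy--Schwarz with the spectral norm of $(\nabla^2 h)(Z_{\ivec})$ and the squared $2\!\to\!\infty$ embedding error of order $n^{-1}\log^2 n$, and finally sum over the $\binom{n}{m}$ tuples and use $n^{-1}\binom{n}{m} = m^{-1}\binom{n-1}{m-1}$. Your explicit remark that the segment containment is what forces the ``suitably large $n$'' qualifier is a slightly more careful rendering of a step the paper asserts without elaboration, but the argument is the same.
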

\begin{proof}
Let $Q \in \bbR^{d \times d}$ be the orthogonal matrix guaranteed to exist
with high probability by Lemma~\ref{lem:2toinfty}.
By Assumption~\ref{assumu:smooth},
Lemma~\ref{lem:2toinfty} implies that eventually
$X_{\ivec}, (\Xhat Q)_{\ivec} \in \calS$
for all $\ivec \in \calC^n_m$, and thus also
$Z_{\ivec} \in \calS$ for all $\ivec \in \calS$.
Applying the triangle inequality, Cauchy-Schwarz
and Assumption~\ref{assumu:smooth},
\begin{equation*} \begin{aligned}
&\left| \sum_{\ivec \in \calC^n_m}
	v_{\ivec}
        (\Xhat Q - X)_{\ivec}^T(\nabla^2 h)(Z_{\ivec})(\Xhat Q - X)_{\ivec}
        \right|
\le
  \sum_{\ivec \in \calC^n_m} \left|v_{\ivec} \right|
     \| (\Xhat Q - X)_{\ivec} \|_{\tti}^2 \| (\nabla^2 h)(Z_{\ivec}) \| \\
&~~~~~~~~~\le C_{F,h} \max_{\ivec \in \calC^n_m} |v_{\ivec}| \binom{n}{m} \| \Xhat Q - X \|_{\tti}^2 .
\end{aligned} \end{equation*}
Applying Lemma~\ref{lem:2toinfty} again, we have that
with probability at least $1-Cn^{-2}$,
\begin{equation} \label{eq:quadraticbound}
\left| \sum_{\ivec \in \calC^n_m} v_{\ivec}
        (\Xhat Q - X)_{\ivec}^T(\nabla^2 h)(Z_{\ivec})(\Xhat Q - X)_{\ivec}
        \right|
\le C \max_{\ivec \in \calC^n_m} |v_{\ivec}|
	\binom{n}{m} \frac{ \log^2 n }{ n }.
\end{equation}
Using the fact that $n^{-1}\binom{n}{m}$ = $m^{-1}\binom{n-1}{m-1}$
and $m$ is constant in $n$ completes the proof.
\end{proof}

\begin{proof}[Proof of Theorem~\ref{thm:ustat:consistent}]
We prove the convergence
$\sqrt{n}(\Uhat^*_n-\theta) \inlaw \calN(0,m^2 \zeta_1)$.
The proof for the ASE plug-in bootstrap
$\UBFhat^*_n$ follows by a similar argument,
and thus details are omitted.

For $\ivec \in \calC^n_m$ and $X \in \bbR^{n \times d}$, define
\begin{equation*}
X_{\ivec} = \begin{bmatrix} X_{i_1} \\
                        X_{i_2} \\
                        \vdots \\
                        X_{i_m} \end{bmatrix} \in \bbR^{md}.
\end{equation*}
Viewing the function $h: (\supp F)^m \rightarrow \bbR$ as
as $h : \bbR^{md} \rightarrow \bbR$
and applying a second-order multivariate Taylor expansion,
\begin{equation} \label{eq:taylor}
\begin{aligned}
\sqrt{n}&\left( \Uhat_n - U_n \right)
= \sqrt{n}\binom{n}{m}^{-1} \sum_{\ivec \in \calC^n_m}
        \left( h(\Xhat_{i_1},\dots,\Xhat_{i_m})
        - h(X_{i_1},\dots,X_{i_m}) \right) \\
&~~~= \sqrt{n} \binom{n}{m}^{-1} \sum_{\ivec \in \calC^n_m}
        (\Xhat Q - X)_{\ivec}^T (\nabla h)(X_{\ivec}) \\
&~~~~~~~~~~~~+ \frac{ \sqrt{n} }{ 2\binom{n}{m} } \sum_{\ivec \in \calC^n_m}
        (\Xhat Q - X)_{\ivec}^T(\nabla^2 h)(Z_{\ivec})(\Xhat Q - X)_{\ivec},
\end{aligned} \end{equation}
where $Q \in \bbO_d$ is the orthogonal matrix guaranteed by
Lemma~\ref{lem:2toinfty} and
$Z_{\ivec} \in \bbR^{md}$ lies on the line segment connecting
$(\Xhat Q)_{\ivec}$ and $X_{\ivec}$.
Lemma~\ref{lem:wtdlinear}, with $v_c = \sqrt{n}\binom{n}{m}^{-1}$
for all $c \in \calC^n_m$, implies
\begin{equation*}
\left| \sqrt{n} \binom{n}{m}^{-1}
\sum_{\ivec \in \calC^n_m}
        (\Xhat Q - X)_{\ivec}^T (\nabla h)(X_{\ivec}) \right|
\le \frac{ C \log n }{ \sqrt{n} }.
\end{equation*}
Lemma~\ref{lem:wtdquadratic} similarly implies that
\begin{equation*}
\left| \frac{ \sqrt{n} }{ 2 \binom{n}{m} }
\sum_{\ivec \in \calC^n_m}
        (\Xhat Q - X)_{\ivec}^T(\nabla^2 h)(Z_{\ivec})(\Xhat Q - X)_{\ivec}
\right|
\le \frac{ C \log^2 n }{ \sqrt{n} },
\end{equation*}
both holding with probability $1-Cn^{-2}$, and thus
\begin{equation*}
\left| \sqrt{n}\left( \Uhat_n - U_n \right) \right|
\le \frac{ C \log^2 n }{ \sqrt{n} }.
\end{equation*}
The Borel-Cantelli lemma implies
that $\sqrt{n}(\Uhat_n - U_n) \rightarrow 0$ almost surely,
as we wished to show.
\end{proof}

\begin{proof}[Proof of Theorem~\ref{thm:Uboot}]
By Slutsky's theorem, it will suffice for us to show that
\begin{equation} \label{eq:uboot:inprob}
\sqrt{n}(\Uhat^*_n-U^*_n) \inprob 0,
\end{equation}
since $\sqrt{n}(U^*_n - \theta) \inlaw \calN(0, m^2 \zeta_1)$
by our assumption that $U^*_n$ is distributionally consistent.
Applying an expansion similar to that in Equation~\eqref{eq:taylor} above,
we have
\begin{equation*} \begin{aligned}
\sqrt{n}(\Uhat^*_n-U^*_n)
&=
\sqrt{n} \sum_{\ivec \in \calC^n_m}
	\bbW_{\ivec}
	\left( h(\Xhat_{i_1},\Xhat_{i_2},\dots,\Xhat_{i_m})
	- h(X_{i_1},X_{i_2},\dots,X_{i_m}) \right) \\
&= \sqrt{n} \sum_{\ivec \in \calC^n_m}
	\bbW_{\ivec}
        (\Xhat Q - X)_{\ivec}^T (\nabla h)(X_{\ivec}) \\
&~~~~~~+ \frac{ \sqrt{n} }{2} \sum_{\ivec \in \calC^n_m}
	\bbW_{\ivec}
        (\Xhat Q - X)_{\ivec}^T(\nabla^2 h)(Z_{\ivec})(\Xhat Q - X)_{\ivec}
\end{aligned} \end{equation*}

Condition on the weight vector $\bbW \in \bbR^{\calC^n_m}$,
which is independent of $(A,X) \sim \RDPG(F,n)$.
Applying Lemmas~\ref{lem:wtdlinear} and ~\ref{lem:wtdquadratic}
with $v_{\ivec} = \sqrt{n}\bbW_{\ivec}/\binom{n}{m}$
implies that with high probability,
\begin{equation*}
\sqrt{n}(\Uhat^*_n-U^*_n)
\le
C\sqrt{n} \binom{n-1}{m-1}
\frac{ \max_{\ivec \in \calC^n_m} |\bbW_{\ivec}|}
		{ \binom{n}{m} } \log^2 n
\le \frac{ C\max_{\ivec \in \calC^n_m} |\bbW_{\ivec}| \log^2 n }{ \sqrt{n} },
\end{equation*}
where we have again used the fact that $m$ is constant in $n$.
Unconditioning, Assumption~\ref{assumw:growth} ensures that
\begin{equation*}
\sqrt{n}(\Uhat^*_n-U^*_n) = o(1),
\end{equation*}
which completes the proof.
\end{proof}

\section{Proof of Theorems~\ref{thm:sparse:ustat:consistent} and~\ref{thm:sparse:ustat:clt}}
\label{apx:sparse}

Here we give proofs of the sparsity results discussed in
Section~\ref{sec:ustat}.
We first need to ensure that in scaling the latent positions,
we do not break the recovery guarantees of the ASE.

\begin{lemma} \label{lem:sparse:2toinfty}
Let $\rho_n \rightarrow 0$ be a sparsity parameter,
satisfying $\rho_n n = \omega(\log n)$.
Let $F$ be a distribution on $\bbR^d$ 
with the property that for all suitably large $n$ it holds
for all $x,y \in \supp F$ that $\rho_n x^T y \in [0,1]$.
Let $X_1,X_2,\dots,X_n$ be drawn i.i.d.\ from $F$ and,
conditional on these $n$ points,
for all suitably large $n$ such that the Bernoulli success parameter makes
sense, generate symmetric adjacency matrix $A$
with independent entries $A_{ij} \sim \Bern(\rho_n X_i^T X_j)$.
Letting $\Xhat = \ASE(A,d)$, there exists a sequence of orthogonal matrices
$Q \in \bbO_d$ such that
\begin{equation*}
\| \Xhat Q - \sqrt{\rho}X \|_{2,\infty}
\le \frac{ \log n }{ \sqrt{ \rho n } }
\end{equation*}
\end{lemma}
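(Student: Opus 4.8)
The plan is to recognize the sparse model as an ordinary random dot product graph whose latent positions have been rescaled, and then to rerun the two-to-infinity perturbation analysis underlying Lemma~\ref{lem:2toinfty} while carrying the dependence on $\rho_n$ through every estimate. Writing $\tilde X_i = \sqrt{\rho_n} X_i$, we have $\tilde X_i^T \tilde X_j = \rho_n X_i^T X_j$, so that $A_{ij} \sim \Bern(\tilde X_i^T \tilde X_j)$ and $P := \E[A \mid X] = \rho_n X X^T = \tilde X \tilde X^T$ is exactly the expected adjacency matrix of an RDPG with latent positions $\tilde X$. The only genuine difference from the dense setting is that the effective latent distribution now depends on $n$ through $\rho_n$, so the spectral facts of Lemma~\ref{lem:rdpgfacts} cannot be cited verbatim and must instead be re-derived with the correct scaling.

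First I would establish the sparse analogues of Equations~\eqref{eq:specgrowth:1} and~\eqref{eq:specgrowth:2}. Since $n^{-1} X X^T$ has nonzero eigenvalues concentrating on those of the full-rank matrix $\Delta = \E X_1 X_1^T$, multiplying by $\rho_n$ gives, with probability at least $1 - Cn^{-2}$, the bound $C_1 \rho_n n \le \lambda_d(P) \le \dots \le \lambda_1(P) \le C_2 \rho_n n$. The entries of $A - P$ are independent, mean zero, bounded, with variances of order $\rho_n$ and maximum expected degree of order $\rho_n n$; a matrix Bernstein (Oliveira-type) inequality then yields $\|A - P\| = O(\sqrt{\rho_n n \log n})$. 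It is precisely here that the hypothesis $\rho_n n = \omega(\log n)$ enters: it is exactly the regime in which this concentration holds and in which the noise $\sqrt{\rho_n n \log n}$ is of smaller order than the signal $\rho_n n$. Feeding these two estimates into the Davis--Kahan argument of~\eqref{eq:DKmodified} produces an orthogonal $Q$ with $\|\UA Q - \UP\|_F \le C\sqrt{d}\,\|A - P\|/\lambda_d(P) = O(\sqrt{\log n}/\sqrt{\rho_n n})$, and the swap and inner-product estimates~\eqref{eq:Qswap} and~\eqref{eq:innerprod} carry through with their powers of $n$ replaced by the corresponding powers of $\rho_n n$.

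The main step is the row-wise (two-to-infinity) bound. Using $\Xhat = \UA \SA^{1/2} = A \UA \SA^{-1/2}$ and $\tilde X = \UP \SP^{1/2} = P \UP \SP^{-1/2}$, where $\UP \SP \UP^T$ is the rank-$d$ eigendecomposition of $P = \tilde X \tilde X^T$, I would decompose, exactly as in the proof of Lemma~\ref{lem:wtdlinear},
\begin{equation*}
\Xhat Q - \tilde X
= A \UA (\SA^{-1/2} Q - Q \SP^{-1/2})
+ A (\UA Q - \UP) \SP^{-1/2}
+ (A - P) \UP \SP^{-1/2}.
\end{equation*}
The first two terms are controlled in $\|\cdot\|_{\tti}$ by the sparse analogues of Lemmas~\ref{lem:frobnorm1} and~\ref{lem:frobnorm2} together with $\|\SP^{-1/2}\| = O((\rho_n n)^{-1/2})$ and a $\|\cdot\|_{\tti}$ bound on the rows of $\UA$, itself obtained by perturbing the corresponding bound for $\UP$. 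The last, leading term is the crux: its $i$-th row is $\sum_j (A - P)_{ij} (\UP \SP^{-1/2})_j$, a sum of independent mean-zero terms whose per-coordinate variance is of order $\rho_n \lambda_d(P)^{-1} = O(1/n)$ and whose per-summand magnitude is of order $\rho_n^{-1/2} n^{-1}$. A Bernstein inequality, rather than the Hoeffding bound used in the dense case, so as to exploit the $O(\rho_n)$ variances, followed by a union bound over the $n$ rows and $d$ coordinates then controls each row in norm. Tracking the powers of $\rho_n n$, all three contributions are $O(\log n/\sqrt{\rho_n n})$, which is the dense rate of Lemma~\ref{lem:2toinfty} inflated by the expected factor $\rho_n^{-1/2}$.

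The hardest part is this leading-term analysis. A purely Frobenius-norm bound on $(A - P) \UP \SP^{-1/2}$ is far too weak for a per-vertex guarantee, being of order $\sqrt{\log n}$ rather than $O(\log n / \sqrt{\rho_n n})$, so one genuinely needs to exploit the independence of the entries within each row. The delicate point in the sparse regime is that Bernstein's inequality sits in its sub-Gaussian regime precisely when $\rho_n n = \omega(\log n)$: in that regime the variance term dominates the bounded-difference term, and the threshold on $\rho_n$ assumed in the hypothesis is exactly what makes the argument close.
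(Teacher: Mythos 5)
Your proposal is correct, but it takes a genuinely different route from the paper. The paper does not re-run the dense two-to-infinity analysis at all; instead it invokes a packaged deterministic perturbation bound (Lemma 1 of \cite{LevLodLev2019}), which controls $\| \Xhat Q - \sqrt{\rho_n} X \|_{2,\infty}$ by three stochastic quantities, namely $\|(A-\rho_n P)\UP\|_{2,\infty}$, $\|\UP^T(A-\rho_n P)\UP\|_F$, and $\|A-\rho_n P\|^2 \kappa(\rho_n P)$, valid once $\|A-\rho_n P\| < C_0 \lambda_d(\rho_n P)$. The probabilistic work then reduces to Oliveira's spectral norm bound $\|A - \rho_n P\| = O(\sqrt{\rho_n n \log n})$ --- which, combined with $\lambda_d(\rho_n P) = \Theta(\rho_n n)$, is exactly where the hypothesis $\rho_n n = \omega(\log n)$ enters, as you correctly identify --- plus entry-wise Hoeffding bounds showing $\|(A-\rho_n P)\UP\|_{2,\infty}$ and $\|\UP^T(A-\rho_n P)\UP\|_F$ are both $O(\log^{1/2} n)$. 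Your route instead rebuilds the bound from the paper's own dense-case machinery: the three-term decomposition of $\Xhat Q - \tilde{X}$, sparse analogues of Lemmas~\ref{lem:rdpgfacts}, \ref{lem:frobnorm1} and~\ref{lem:frobnorm2}, and row-wise concentration for the leading term $(A-P)\UP\SP^{-1/2}$; this is self-contained (it is essentially what one would do to prove the cited lemma from scratch) but considerably longer, and your term-by-term rates all check out against the target $\log n / \sqrt{\rho_n n}$. One claim in your write-up is an overstatement rather than an error: Bernstein's inequality is not genuinely needed for the leading term. Row-wise Hoeffding already suffices, because the relevant sum of squares is $\sum_j \lambda_s^{-1} (\UP)_{js}^2 = \lambda_s^{-1}(\rho_n P) = O(1/(\rho_n n))$ regardless of the $O(\rho_n)$ variances of the entries of $A - \rho_n P$, which after a union bound yields row norms $O(\sqrt{\log n}/\sqrt{\rho_n n})$, comfortably within the stated rate; this Hoeffding argument is exactly what the paper uses. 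Bernstein sharpens the intermediate estimate to $O(\sqrt{\log n / n})$ but buys nothing for the conclusion, and consequently the sub-Gaussian-regime condition you highlight is not the binding use of the hypothesis $\rho_n n = \omega(\log n)$; the binding uses are the spectral concentration and the signal-to-noise separation $\sqrt{\rho_n n \log n} = o(\rho_n n)$, which you also noted.
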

\begin{proof}
Writing $\E[A \mid X] = \rho P = \rho X X^T \in \bbR^{n \times n}$ and
letting $\kappa(M)$ denote the ratio of the largest and smallest
non-zero singular values of matrix $M$ (i.e., the condition number
ignoring zero eigenvalues),
using Lemma 1 in \cite{LevLodLev2019}, there exists a matrix
$Q \in \bbO_d$ such that with high probability
\begin{equation} \label{eq:multinetbound}
\|\Xhat Q - \sqrt{ \rho } X \|_{2,\infty}
\le \frac{ C \|(A-\rho P) \UP \|_{2,\infty} }{ \sqrt{ \lambda_d(\rho P) } }
  + \frac{ C \| \UP^T (A-\rho P) \UP \|_F }{ \sqrt{ \lambda_d(\rho P) } }
  + \frac{ C \| A - \rho P \|^2 \kappa(\rho P) }{ \lambda_d^{3/2}( \rho P ) },
\end{equation}
provided that
\begin{equation} \label{eq:multinet:speccond}
  \| A - \rho P \| < C_0 \lambda_d(\rho P)
\end{equation}
for some nonnegative constant $C_0 < 1$.
Our assumption that $n \rho = \omega( \log n )$ is enough to ensure
that Theorem 3.1 in \cite{Oliveira2009} applies,
and it follows that
\begin{equation*} 
 \| A - \rho P \| = O( \sqrt{ \rho n \log n } ) 
\end{equation*}
By Equation~\eqref{eq:specgrowth:1} in Lemma~\ref{lem:rdpgfacts}, we have $\lambda_d(P) = \Theta( n )$,
whence $\lambda_d(\rho P) = \Theta( \rho n )$.
Since $\rho n = \omega( \log n)$ by assumption, it follows that
$\| A - \rho P \| = o( \lambda_d(\rho P) )$,
and we conclude that the bound in
Equation~\eqref{eq:multinet:speccond} holds eventually,
and thus so does the bound in Equation~\eqref{eq:multinetbound}.

We turn now to bounding the right-hand
side of Equation~\eqref{eq:multinetbound}.
For fixed $k, \ell \in [d]$,
consider the quantity
\begin{equation*}
R_{k\ell}
= \left( \UP^T (A-\rho P) \UP \right)_{k,\ell}
= \sum_{i,j} (A_{ij}-\rho P_{ij}) \UP_{i,k} \UP_{j,\ell}.
\end{equation*}
By Bernstein's inequality, for $t > 0$,
\begin{equation} \label{eq:bernstein:1}
\Pr\left[ \left| R_{k\ell} \right| > t \right]
\le 2\exp\left\{ \frac{ -t^2 }{ 2 \nu_{k,\ell} + 2t/3  } \right\},
\end{equation}
where 
\begin{equation*}
\nu_{k\ell}
= 2\sum_{i<j} \Var\left( (A_{ij}-\rho P_{ij}) \UP_{i,k} \UP_{j,\ell} \right)
= 2\sum_{i<j} \rho P_{ij} (1-\rho P_{ij})  \UP_{i,k}^2 \UP_{j,\ell}^2.
\end{equation*}
Using the fact that $0 \le P_{ij} \le 1$ for all $i,j$ and the fact that the columns of $\UP$ are orthonormal, we have $\nu_{k\ell} \le \rho \le 1$.
Thus, taking $t = C \log n$ in Equation~\eqref{eq:bernstein:1} for suitably large constant $C > 0$, we conclude that 
\begin{equation*}
\Pr\left[ \left| R_{k\ell} \right| > C\log n \right]
\le 2n^{-2}
\end{equation*}
Recalling that the dimension $d$ is constant,
a union bound over all $k,\ell \in [d]$ and an application of
the Borel-Cantelli Lemma implies that
\begin{equation} \label{eq:unitary1}
\| \UP^T (A-\rho P) \UP \|_F = O( \log n).
\end{equation}
A similar argument shows that
\begin{equation} \label{eq:unitary2}
\| (A-\rho P) \UP \|_{2,\infty} = O( \log n).
\end{equation}
Applying Equations~\eqref{eq:multinet:speccond},~\eqref{eq:unitary1}
and~\eqref{eq:unitary2}
to the right-hand side of Equation~\eqref{eq:multinetbound},
\begin{equation*}
\| \Xhat - \rho X \|_{2,\infty}
\le \frac{ C \log n }{ \sqrt{ \rho n } }
        + \frac{ C \kappa(\rho P) \rho n \log n }{ (\rho n)^{3/2} }.
\end{equation*}
Again using Equation~\eqref{eq:specgrowth:1} in
Lemma~\ref{lem:rdpgfacts},
$\kappa(\rho P) = \lambda_1(\rho P)/\lambda_d(\rho P) = O(1)$, and thus
\begin{equation*}
\| \Xhat - \rho X \|_{2,\infty} 
\le \frac{ C \log n }{ \sqrt{ \rho n } },
\end{equation*}
which completes the proof.
\end{proof}

\begin{proof}[Proof of Theorem~\ref{thm:sparse:ustat:consistent}]
Using arguments similar to the proof of Lemma~\ref{lem:sparse:2toinfty} above,
one can establish sparse analogues of
Lemmas~\ref{lem:rdpgfacts},~\ref{lem:wtdlinear} and~\ref{lem:wtdquadratic}.
Details are omitted.
Theorem~\ref{thm:sparse:ustat:consistent} then follows by precisely the
same line of argument as that used to prove
Theorem~\ref{thm:ustat:consistent}.
\end{proof}

\section{Proof of Theorem~\ref{thm:RDPGboot:wasser}} \label{apx:wasser}

\begin{proof}
Fix $\epsilon > 0$.
Since orthogonal transformation of the latent positions
does not change the graphs' distributions, we may assume without
loss of generality that $\dcirc_1(F_1,F_2) = d_1(F_1,F_2)$,
i.e., that $Q=I$ is the minimizer in Equation~\eqref{eq:def:dcirc}.
By definition of the Wasserstein distance $d_1$,
there exists a coupling $\nu$
of $X_1 \sim F_1$ and $Z_1 \sim F_2$ such that
\begin{equation} \label{eq:F0F1coupling}
 \int \| X_1 - Z_1 \| d \nu \le d_1(F_1,F_2) + \epsilon.
\end{equation}
We will use this coupling $\nu$ to construct a coupling of $A$ and $H$. Draw pairs
\begin{equation*}
(X_1,Z_1),(X_2,Z_2),\dots,(X_n,Z_n) \iid \nu.
\end{equation*}
It is a basic fact of Bernoulli random variables
that if $\xi_1 \sim \Bern(p_1)$ and $\xi_2 \sim \Bern(p_2)$, then
$d_1(\xi_1,\xi_2) \le |p_1-p_2|$.
Using this fact, conditional on $(X,Z)$, we can couple
$(A_{ij},H_{ij})$ for each $i < j$ so that
\begin{equation} \label{eq:bernoullicoupling}
\Pr[ A_{i,j} \neq H_{i,j} \mid X_i,X_j,Z_i,Z_j ] \le |X_i^TX_j - Z_i^TZ_j|.
\end{equation}
By construction,
$(A,X) \sim \RDPG(F_1,n)$ and $(H,Z) \sim \RDPG(F_2,n)$ marginally,
so this scheme yields a valid coupling of $A$ and $H$,
which we denote $(A,H) \sim \nutilde$, and thus
\begin{equation*} 
\WGM{p}^p(A,H) \le \int \dGM^p(A,H) d\nutilde(A,H).
\end{equation*}
By the definition of $\dGM$, Jensen's inequality,
and the fact that $A$ and $H$ are binary, we have
\begin{equation*}
\dGM^p(A,H) \le \left( \frac{1}{2}\binom{n}{2}^{-1} \| A - H \|_1 \right)^p
\le \binom{n}{2}^{-1} \sum_{i < j} |A-H|_{i,j}^p
= \binom{n}{2}^{-1} \sum_{i < j} |A-H|_{i,j},
\end{equation*}
whence
\begin{equation} \label{eq:dGM:int}
\int \dGM^p(A,H) d\nutilde(A,H)
\le \binom{n}{2}^{-1} \sum_{i < j} \int |A-H|_{i,j} d\nutilde
= \binom{n}{2}^{-1} \sum_{i < j}
	 \nutilde\left( \{ A_{ij} \neq H_{ij} \} \right) .
\end{equation}
Since Equation~\eqref{eq:bernoullicoupling} holds under the coupling $\nutilde$,
we have
\begin{equation*}
\nutilde\left( \{ A_{ij} \neq H_{ij} \} \right)
\le \int \int |X_i^T X_j - Z_i^T Z_j| d \nu(X_i,Z_i) d \nu (X_j,Z_j).
\end{equation*}
We can therefore further bound Equation~\eqref{eq:dGM:int} by
\begin{equation*} \begin{aligned}
\int \dGM^p(A,H) d\nutilde(A,H)
&\le \int \int |X_1^TX_2 - Z_1^T Z_2| d\nu(X_1,Z_1) d\nu(X_2,Z_2) \\
&\le \int \int \left(\| X_1 \|+\|Z_1\|\right)\|X_2 - Z_2\| d\nu d\nu \\
&\le 2\left( \int \|X_2 - Z_2\| d\nu \le d_1(F_1,F_2) + \epsilon \right),
\end{aligned} \end{equation*}
where we have used the fact that both $F_1$ and $F_2$, being inner product
distributions, have supports contained in the unit ball,
and the last inequality follows from Equation~\eqref{eq:F0F1coupling}.
Thus, we conclude that
\begin{equation*}
\WGM{p}^p(A,H) \le 2\left( d_1(F_1,F_2) + \epsilon \right),
\end{equation*}
and the result follows since $\epsilon > 0$ was arbitrary.
\end{proof}

\begin{proof}[Proof of Theorem~\ref{thm:RDPGboot:wasser}]
Let us first fix notation.
Recall that $(A,X) \sim \RDPG(F,n)$ and that
$(H,Z) \sim \RDPG(F,n)$ independently of $(A,X)$.
Let $F_n = n^{-1} \sum_{i=1}^n \delta_{X_i}$
denote the empirical distribution of the true latent positions of $A$,
and, conditional on $X$, let $(A^*,X^*) \sim \RDPG(F_n,n)$.
Letting $\Fhat_n$ denote the empirical distribution of the ASE estimates
$\Xhat_1,\Xhat_2,\dots,\Xhat_n$,
by definition of $\Ahat^*$,
we have that conditional on $A$,
$(\Ahat^*,\Xhat^*) \sim \RDPG(\Fhat,n)$ analogously.
By the triangle inequality,
\begin{equation} \label{eq:WGM:triangle}
\WGM{p}(H,\Ahat^*) \le \WGM{p}(A^*,H) + \WGM{p}(A^*,\Ahat^*).
\end{equation}
By Lemma~\ref{lem:GMLPwasser}, we have
\begin{equation} \label{eq:WGM:inter1}
\WGM{p}^p(A^*,H) \le 2d_1(F_n,F) = O(n^{-1/d} \log n),
\end{equation}
where we have used the fact
that $d$-dimensional product distributions have bounded support
(hence all moments of $X_1 \sim F$ are finite)
to apply Theorem 3.1 and Corollary 5.2 from \cite{Lei2018emp}
(with $q=\infty$ and $p=1$ in the notation of that paper)
to bound $d_1(F_n,F) = O(n^{-1/(2 \vee d)} \log n)$.
To bound $\WGM{p}(A^*,\Ahat^*)$, we will construct a coupling similar
to that in the proof of Lemma~\ref{lem:GMLPwasser}.

Letting $\xi = (\xi_1,\xi_2,\dots,\xi_n)$ be a vector of 
independent draws from the uniform distribution on $[n]$, we can write
$\Xhat^*_i = \Xhat_{\xi_i}$ for each $i=1,2,\dots,n$
and $X^*_i = X_{\xi_i}$ analogously.
Thus, we can couple the latent positions of $\Ahat^*$ and $A^*$ through the
random vector $\xi$. Then, conditional on $X$, $A$ and $\xi$,
we can further couple the entries of $\Ahat^*$
and $A^*$ via the same coupling construction used in
the proof of Lemma~\ref{lem:GMLPwasser} above, so that
\begin{equation*}
\Pr[ A^*_{i,j} \neq \Ahat^*_{i,j} \mid A,X,\xi] \le
| X_{\xi_i}^T X_{\xi_j} - \Xhat_{\xi_i}^T \Xhat_{\xi_j}|.
\end{equation*}
Letting $\nu$ denote the resulting joint measure on $(\xi,X,A,A^*,\Ahat^*)$,
\begin{equation} \label{eq:WGM:AstarAhat}
\WGM{p}^p(A^*,\Ahat^*)
\le \int \left( \frac{\|A^* - \Ahat^*\|_1}{n(n-1)} \right)^p d \nu
\le \int \frac{ \| A^* - \Ahat^* \|_1 }{ n(n-1) } d \nu,
\end{equation}
where we have
used Jensen's inequality and the fact that $A^*$ and $\Ahat^*$ are binary,
as in the proof of Lemma~\ref{lem:GMLPwasser}.
We will proceed to bound the integral on the right-hand side.
Let $E_n$ denote the event that the bound in Lemma~\ref{lem:2toinfty} holds.
On $E_n^c$, we can trivially bound $\dGM(A^*,\Ahat^*)$ by $1$.
Since $E_n$ depends only on $A$ and $X$, and the marginal distribution of $(A,X)$
under $\nu$ is $(A,X) \sim \RDPG(F,n)$ by construction of $\nu$,
Lemma~\ref{lem:2toinfty} implies $\nu(E_n^c) = O(n^{-2})$.
Thus,
\begin{equation*} \begin{aligned}
\int \frac{\|A^* - \Ahat^*\|_1}{n(n-1)} d \nu
&\le
\int_{E_n} \frac{\|A^* - \Ahat^*\|_1}{n(n-1)} d \nu + O(n^{-2}) \\
&= \sum_{i < j} \frac{ \nu\left( \left\{ A^*_{i,j} \neq \Ahat^*_{i,j} \right\}, E_n \right) }
		{ n(n-1) } + O(n^{-2}).
\end{aligned} \end{equation*}
By our construction of the coupling $\nu$, we have
\begin{equation*}
\nu\left(  \left\{ A^*_{i,j} \neq \Ahat^*_{i,j} \right\} \right)
\le |X_{\xi_i}^TX_{\xi_j} - \Xhat_{\xi_i}^T\Xhat_{\xi_j}|.
\end{equation*}
By Lemma~\ref{lem:2toinfty},
when $E_n$ holds, this difference of absolute values is bounded
by $O(n^{-1/2}\log n)$, and thus we have
\begin{equation*} \begin{aligned}
\int \frac{\|A^* - \Ahat^*\|_1}{n(n-1)} d \nu
&\le \binom{n}{2}^{-1}
    \sum_{i < j}
    \nu\left( \left\{ A^*_{i,j} \neq \Ahat^*_{i,j} \right\}, E_n \right)
	+ O(n^{-2}) \\
&= O(n^{-1/2} \log n) + O(n^{-2}) = O(n^{-1/2} \log n).
\end{aligned} \end{equation*}
Plugging this bound into Equation~\eqref{eq:WGM:AstarAhat},
we conclude that
\begin{equation*}
\WGM{p}^p(A^*,\Ahat^*) = O( n^{-1/2} \log n).
\end{equation*}
Applying this and Equation~\eqref{eq:WGM:inter1} to Equation~\eqref{eq:WGM:triangle}, we conclude that
\begin{equation*}
\WGM{p}^p(H,\Ahat^*) = O\left( (n^{-1/2} + n^{-1/d})\log n \right),
\end{equation*}
as we set out to show.
\end{proof}

\end{document}